\newtheorem{theorem}{Theorem}[section]
\newtheorem*{theorem*}{Theorem}
\newtheorem{lemma}[theorem]{Lemma}
\newtheorem{proposition}[theorem]{Proposition}
\newtheorem{corollary}[theorem]{Corollary}
\newtheorem*{corollary*}{Corollary}
\newtheorem*{conjecture*}{Conjecture}
\newtheorem*{question*}{Question}
\newtheorem{claim}{Claim}
\newenvironment{claimproof}[1]{\par\noindent\textit{Proof of the claim:}\space#1}{\hfill $\blacksquare$}
\newtheorem{maintheorem}{Theorem}
\newtheorem{maincorollary}[maintheorem]{Corollary}
\theoremstyle{definition}
\newtheorem{definition}[theorem]{Definition}
\newtheorem*{definition*}{Definition}
\DeclareMathOperator{\Auttop}{\mathrm{Auttop}}
\DeclareMathOperator{\Aut}{\mathrm{Aut}}
\DeclareMathOperator{\Con}{\mathrm{Con}}
\DeclareMathOperator{\Par}{\mathrm{Par}}
\DeclareMathOperator{\Homeo}{\mathrm{Homeo}}
\DeclareMathOperator{\Stab}{\mathrm{Stab}}
\DeclareMathOperator{\Fix}{\mathrm{Fix}}
\DeclareMathOperator{\Cham}{\mathrm{Cham}}
\DeclareMathOperator{\proj}{\mathrm{proj}}
\DeclareMathOperator{\diam}{\mathrm{diam}}
\DeclareMathOperator{\Ker}{\mathrm{Ker}}
\DeclareMathOperator{\D}{\mathrm{D}}
\DeclareMathOperator{\Vertex}{\mathrm{Vert}}
\DeclareMathOperator{\id}{\mathrm{id}}
\DeclareMathOperator{\N}{\mathbf{N}}
\DeclareMathOperator{\Z}{\mathbf{Z}}
\DeclareMathOperator{\CG}{\mathcal{CG}}
\newcommand{\suchthat}{\;\ifnum\currentgrouptype=16 \middle\fi|\;}
\title{A topological characterization of the Moufang \\ property for compact polygons\footnotetext{2010 Mathematics Subject Classification: 20E42 51E24 (Primary), 20F65 22D05 (Secondary).}}
\author{Nicolas Radu\thanks{F.R.S.-FNRS Research Fellow.}}
\affil{UCLouvain, 1348 Louvain-la-Neuve, Belgium}
\date{July 15, 2016}
\begin{document}
 \maketitle
 
\begin{abstract}
We prove a purely topological characterization of the Moufang property for disconnected compact polygons in terms of convergence groups. As a consequence, we recover the fact that a locally finite thick affine building of rank~$3$ is a Bruhat--Tits building if and only if its automorphism group is strongly transitive. We also study automorphism groups of general compact polygons without any homogeneity assumption. A compactness criterion for sets of automorphisms is established, generalizing the theorem by Burns and Spatzier that the full automorphism group, endowed with the compact-open topology, is a locally compact group.
\end{abstract}

\setcounter{tocdepth}{2}

\tableofcontents

\section{Introduction}

Compact buildings form a natural generalization of compact projective planes. They were introduced by Burns and Spatzier in~\cite{Burns}, and used in their proof of the Rank Rigidity Theorem for Riemannian manifolds of non-positive sectional curvature, see \cite{Burns2}. A prominent example of a compact building is provided by the spherical building associated with a semisimple algebraic group over a non-discrete locally compact field. Since the origin, the problem of characterizing those buildings associated with semisimple algebraic groups among all compact buildings has attracted much attention; in some sense, this is the topic of the encyclopaedic book~\cite{Salzmann}, mainly devoted to the study of \emph{connected} compact projective planes. The relevance of that problem is also well illustrated by the work of Burns and Spatzier: a key result from \cite{Burns} that is used in the proof of Rank Rigidity in \cite{Burns2} asserts that a thick irreducible \emph{connected} compact spherical building of rank~$\geq 2$ is the spherical building associated with a simple Lie group if and only if its automorphism group is strongly transitive (see Subsection~2.2 for the definition of strong transitivity). This result has been improved by Grundh\"ofer, Knarr and Kramer who showed in \cite{GKK} and \cite{GKK2} that the same classification is true for connected buildings admitting a chamber transitive automorphism group. The starting point of the present article is the question whether such a statement could hold beyond the connected case. A precise formulation can be stated as~follows. 

\begin{conjecture*}
Let $\Delta$ be an infinite thick irreducible compact spherical building of rank at least~$2$. Then $\Delta$ is the spherical building associated with a semisimple algebraic group over a non-discrete locally compact field if and only if $\Delta$ is strongly transitive.
\end{conjecture*}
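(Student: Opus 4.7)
The ``only if'' direction is standard: when $\Delta$ is the spherical building of a semisimple algebraic group $G$ over a non-discrete locally compact field $k$, the group $G(k)$ acts strongly transitively on $\Delta$ through its $BN$-pair, and the $k$-topology induces the given compact topology on $\Delta$. The substance lies in the converse, and my plan is to reduce it to a rank~$2$ statement and then invoke the classification of Moufang polygons.

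First, a classical theorem of Tits states that every thick irreducible spherical building of rank $\geq 3$ automatically satisfies the Moufang condition, so the Moufang property is free as soon as the rank is at least~$3$. Moreover, such a building is determined by its rank~$2$ residues, each of which is a compact polygon that inherits both strong transitivity and compactness from $\Delta$ (the strong transitivity passes to residues because stabilizers of simplices act strongly transitively on links). This reduces the conjecture to showing that every thick, strongly transitive, compact polygon is the spherical building of a rank~$2$ semisimple algebraic group over a non-discrete locally compact field.

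For the rank~$2$ step I would separate the connected and totally disconnected cases. In the connected case, the theorem of Grundh\"ofer--Knarr--Kramer cited in the introduction already identifies every chamber-transitive connected compact polygon with the spherical building of a simple Lie group, which is of the desired algebraic form. In the totally disconnected case, I would apply the main theorem of this paper: the topological (convergence-group) characterization of the Moufang property for disconnected compact polygons. One checks that strong transitivity is strong enough to verify the convergence hypothesis (the point stabilizers contain enough root-group-like dynamics), and thereby concludes that $\Delta$ is Moufang. Once Moufang, one invokes the Tits--Weiss classification and argues, family by family, that the algebraic data (field, (pseudo-)quadratic form, involution, Jordan algebra, etc.) defining the polygon must be compatible with the compact topology, forcing the base field to be a non-discrete locally compact field and the polygon to be of algebraic origin.

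The main obstacle is precisely the last classification step in the totally disconnected case. Among Moufang polygons one finds genuinely rank~$2$ exotic families -- mixed-type quadrangles and hexagons in characteristic $2$ and $3$, the Ree--Tits octagons of type $^2F_4$, and the quadrangles of type $E_6,E_7,E_8,F_4$ built from anisotropic Jordan division algebras -- for which ruling out ``exotic'' compact topologies not coming from a locally compact field of definition is delicate, since no Lie-theoretic rigidity is available and the residue-based induction of the rank $\geq 3$ case does not help. Overcoming this step seems to require a systematic valuation-theoretic analysis of Moufang polygons under the compactness hypothesis, going beyond the convergence-group machinery developed here.
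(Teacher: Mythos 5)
There is a fundamental problem: the statement you are proving is a \emph{conjecture}, and the paper does not prove it --- it remains open, and the paper's own contribution (Theorem~\ref{theorem:Characterization}) is explicitly presented as ``a first step'' towards it. The fatal gap in your proposal is the sentence claiming that ``one checks that strong transitivity is strong enough to verify the convergence hypothesis.'' That is precisely the missing ingredient. Theorem~\ref{theorem:Characterization} only says that, for a strongly transitive totally disconnected compact polygon, the Moufang property is \emph{equivalent} to the convergence-group condition (ii)/(iii); it gives no mechanism for deriving that condition from strong transitivity. The paper states this explicitly: ``in order to prove the conjecture, it only remains to show that Condition (ii) or (iii) in Theorem~\ref{theorem:Characterization} is satisfied by any strongly transitive compact polygon,'' and it relates this missing step to the open question about the sets $\tilde{J}_\varepsilon$ and to the classical unresolved conjecture that the stabilizer of a quadrangle in a compact projective plane is compact. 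Your parenthetical ``the point stabilizers contain enough root-group-like dynamics'' presupposes root groups, i.e.\ the Moufang property, which is what one is trying to establish; so the argument is circular at its core.

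You have also misplaced the difficulty in the final step. The classification concern you raise (mixed quadrangles and hexagons, Ree--Tits octagons, exceptional quadrangles, and ``exotic'' compact topologies) is already settled: by Grundh\"ofer--Kramer--Van Maldeghem--Weiss (\cite{Grundhofer}*{Theorem~1.1}, cited in the introduction), an infinite thick irreducible compact totally disconnected spherical building of rank $\geq 2$ is Moufang if and only if it is the building of a semisimple algebraic group over a non-discrete locally compact field --- no further valuation-theoretic case analysis is needed once Moufang is known, and the connected case is covered by Burns--Spatzier (and Grundh\"ofer--Knarr--Kramer). Likewise, the reduction to rank~$2$ in the paper does not go through residues: for rank $\geq 3$ Tits' theorem gives Moufang outright and \cite{Grundhofer}*{Theorem~1.1} concludes directly, so only the totally disconnected rank~$2$ case remains. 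In short, every step of your outline except one is already available in the literature, and the one step you dispatch in a single sentence is exactly the open problem.
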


Here again, one could even replace strong transitivity by chamber transitivity in this conjecture. Historically, a similar problem had been suggested by J.~Tits in the 1970's for finite buildings. More precisely, Tits conjectured that a finite thick irreducible building of rank~$\geq 2$ should be the building associated with a finite simple group of Lie type if and only if the building has a strongly transitive automorphism group, see \cite{Tits}*{Conjecture~11.5.1}. Tits' hope was that this could be helpful to the  classification of the finite simple groups, which was an ongoing project at the time. Ironically, the latter classification was achieved first, and then used by Buekenhout and Van Maldeghem in their proof that Tits' conjecture is indeed accurate, see \cite{Buekenhout}.

As mentioned above, the conjecture is certainly true for connected buildings by the work of Burns and Spatzier; we may therefore assume that $\Delta$ is totally disconnected, since any disconnected compact building is so. Moreover, Grundh\"ofer, Kramer, Van Maldeghem and Weiss have proved that, under the same hypotheses as the conjecture, the building $\Delta$ is the spherical building associated with a semisimple algebraic group over a non-discrete locally compact field if and only if  $\Delta$ is Moufang: this follows from~\cite{Grundhofer}*{Theorem~1.1}. As is well known, Tits proved that every thick irreducible spherical building of rank~$\geq 3$ is Moufang (see \cite{Tits}*{Addenda}), so that the conjecture above can be reduced to the following one, which appears as Question~1.4 in \cite{Grundhofer}. 

\begin{conjecture*}[Reformulation]
Let $\Delta$ be an infinite thick irreducible compact totally disconnected spherical building of rank~$2$. If $\Delta$ is strongly transitive, then $\Delta$ is Moufang.
\end{conjecture*}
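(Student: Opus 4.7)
The core strategy is to invoke the paper's topological characterization of the Moufang property for disconnected compact polygons and to verify its hypothesis from strong transitivity alone. Since $\Delta$ is assumed totally disconnected, the characterization is directly applicable and reduces \emph{being Moufang} to a purely dynamical statement: for each root $\alpha$ of $\Delta$, the relevant root group must act on a certain natural compact space of chambers with the features of a convergence group. My plan is therefore to produce, from strong transitivity, enough automorphisms witnessing the required convergence dynamics.

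First, I would fix a panel $\pi$ of $\Delta$ and study the compact totally disconnected space $\pi^{\ast}$ of chambers containing $\pi$. Strong transitivity supplies a rich family of automorphisms permuting apartments through $\pi$ and acting transitively on pairs of distinct chambers of $\pi^{\ast}$. Viewing $\Aut(\Delta)$ as a locally compact group via the generalized Burns--Spatzier compactness criterion announced in the abstract, one can extract convergent subsequences from appropriate families of such automorphisms. The intended outcome is: for each prescribed chamber $c \in \pi^{\ast}$, one obtains elements of $\Stab_{\Aut(\Delta)}(c)$ whose action on $\pi^{\ast}$ contracts the complement of $c$ onto a second prescribed chamber. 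This north--south behaviour is precisely what is needed to feed the convergence-group framework.

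Next, I would upgrade these local dynamics, which are attached to a single panel, to global dynamics along an entire root $\alpha$. Using strong transitivity again to align apartments on consecutive panels of $\alpha$, and working inside the locally compact group $\Aut(\Delta)$, the dynamics on successive panels of $\alpha$ should fit together coherently and produce automorphisms fixing larger and larger neighbourhoods of $\alpha$. Limits of such automorphisms should land in $\Stab_{\Aut(\Delta)}(\alpha)$ and, ultimately, in the root group $U_{\alpha}$ defined as the pointwise fixator of the interior panels of $\alpha$, with the transitivity on apartments through $\alpha$ required by the Moufang condition.

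The main obstacle I expect is precisely this last step: transferring a supply of automorphisms with only localized rigidity into genuine elements of $U_{\alpha}$, which by definition fix an entire half-apartment pointwise. The paper's topological characterization is tailored to cross exactly this divide, but verifying its hypothesis calls for a delicate combination of three ingredients: total disconnectedness of $\Delta$ (so that fixators of clopen neighbourhoods of chambers are already large), local compactness of $\Aut(\Delta)$ (so that convergent subsequences exist), and the convergence-group dynamics on each panel's chamber space (so that limits can be identified). Orchestrating the simultaneous convergence on all panels of $\alpha$ is where the heart of the argument should lie, and where I anticipate the technical difficulty to concentrate.
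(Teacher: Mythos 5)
The statement you are trying to prove is not proved in the paper at all: it is stated there as an open conjecture (the reformulation of Question~1.4 of Grundh\"ofer--Kramer--Van Maldeghem--Weiss), and the paper's Theorem~\ref{theorem:Characterization} only shows that strong transitivity \emph{together with} the convergence-group condition (ii)/(iii) implies Moufang. The paper says explicitly that what remains in order to prove the conjecture is to show that condition (ii) or (iii) holds for every strongly transitive compact polygon. Your proposal does not close that gap: the decisive step in your second paragraph --- that strong transitivity plus local compactness of $\Auttop(\Delta)$ yields, for each prescribed chamber $c$ of a panel $\pi$, automorphisms contracting the complement of $c$ in $\Cham(\pi)$ onto a second prescribed chamber --- is stated as an ``intended outcome'' with no argument, and it is precisely the open problem. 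Local compactness (Corollary~\ref{corollary:lc}) and the compactness criterion (Theorem~\ref{theorem:Criterion}) give you relative compactness of certain \emph{constrained} families and, via Proposition~\ref{proposition:closer}, the existence of collapsed sequences when compactness fails; they do not produce the attracting/repelling (north--south) dynamics on $\Cham(\pi)$, and $2$-transitivity of $\Stab_G(\pi,\pi')$ on $\Cham(\pi)$ (Lemma~\ref{lemma:strongtransitivity}) is far from implying that the closure of its image in $\Homeo(\Cham(\pi))$ acts $3$-properly. Note also that the convergence-group condition is a property of the whole closed group (properness of the action on triples), not merely the existence of some elements with north--south behaviour, so even exhibiting individual contracting elements would not suffice without a uniform argument over all sequences in the group.

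Your third paragraph, on upgrading panel-wise dynamics to genuine root-group elements fixing a half-apartment, is essentially the content of the paper's implication (iii) $\Rightarrow$ (i), which is carried out there not by taking limits of automorphisms in $\Auttop(\Delta)$ but by transferring the problem to the panel-trees provided by the Carette--Dreesen theorem and applying the Baumgartner--Willis decomposition $\Par_G(t) = \Con_G(t)\cdot(\Par_G(t)\cap\Par_G(t^{-1}))$ to suitable elements of the fixator of an apartment. That machinery, however, is only available once condition (iii) is established, so the heart of the difficulty sits one step earlier than where you locate it. As written, the proposal reduces the conjecture to itself and does not constitute a proof.
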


In the rest of this paper, we shall therefore focus on compact spherical buildings of rank~$2$, also called \textbf{compact polygons}. More precisely, a compact polygon of diameter $m$ is called a \textbf{compact $m$-gon}. It is irreducible if and only if $m \geq 3$.

Our first main result may be viewed as a first step towards the conjecture above.

\begin{maintheorem}\label{theorem:Characterization}
Let $\Delta$ be an infinite thick compact totally disconnected $m$-gon with $m \geq 3$ and let $G$ be a closed subgroup of the group $\Auttop(\Delta)$ of topological automorphisms  which is strongly transitive on $\Delta$. The following assertions are equivalent.
\begin{enumerate}[(i)]
\item $\Delta$ is Moufang.
\item For each panel $\pi$ of $\Delta$, the closure in $\Homeo(\Cham(\pi))$ of the group of projectivities $\Pi(\pi)$ acts as a convergence group on $\Cham(\pi)$.
\item For each panel $\pi$ of $\Delta$, the closure of the natural image of $\Stab_{G}(\pi)$ in $\Homeo(\Cham(\pi))$ acts as a convergence group on $\Cham(\pi)$.
\end{enumerate}
\end{maintheorem}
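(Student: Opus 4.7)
My plan is to prove the equivalence via the cyclic implication $(i) \Rightarrow (iii) \Rightarrow (ii) \Rightarrow (i)$.

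For $(iii) \Rightarrow (ii)$, I would use that strong transitivity of $G$ on $\Delta$ realises every perspectivity between panels in a common apartment as the restriction of a suitable element of $G$ stabilising both panels. Composing, every element of $\Pi(\pi)$ coincides on $\Cham(\pi)$ with the action of some element of $\Stab_G(\pi)$. Hence the closure of $\Pi(\pi)$ in $\Homeo(\Cham(\pi))$ embeds as a closed subgroup of the closure of the image of $\Stab_G(\pi)$, and the convergence group property passes to closed subgroups, so $(iii)$ implies $(ii)$.

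For $(i) \Rightarrow (iii)$, the Moufang hypothesis produces, for every root $\alpha$, a root group $U_\alpha \le \Auttop(\Delta)$. I would first replace $G$ by the closed strongly transitive overgroup generated by $G$ together with all root groups; the conclusion $(iii)$ for this overgroup restricts to $(iii)$ for $G$ by the closed-subgroup argument above. We may thus assume all root groups lie in $G$. For a panel $\pi$ contained in two opposite roots $\alpha$ and $\alpha'$, the groups $U_\alpha, U_{\alpha'} \subseteq \Stab_G(\pi)$ equip $\Cham(\pi)$ with a Moufang set structure. The convergence group property for the closure of the image of $\Stab_G(\pi)$ would then be verified directly from a Bruhat-type decomposition of $\langle U_\alpha, U_{\alpha'}\rangle$, ensuring that any unbounded sequence admits a subsequence with the required attractor/repeller dynamics on $\Cham(\pi)$.

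The implication $(ii) \Rightarrow (i)$ is the hardest. From the convergence group hypothesis on $\overline{\Pi(\pi)}$, I would produce loxodromic elements with prescribed attractor/repeller pairs in $\Cham(\pi)$, and then extract a parabolic subgroup of $\overline{\Pi(\pi)}$ stabilising a chamber $c_0 \in \Cham(\pi)$ and acting sharply transitively on $\Cham(\pi) \setminus \{c_0\}$. Using strong transitivity, this subgroup lifts to a subgroup of $\Stab_G(\pi)$ fixing $c_0$. The decisive step, and the main obstacle of the theorem, is to upgrade this panel-local sharp transitivity to a genuinely building-theoretic Moufang root group: namely, to show that the constructed subgroup fixes pointwise an entire root $\alpha$ of $\Delta$ containing $c_0$ and $\pi$, and acts simply transitively on the apartments through $\alpha$. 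Bridging this gap between panel-level dynamics and global building-theoretic fixation requires combining the convergence hypothesis at each panel with strong transitivity and the totally disconnected topology of $\Delta$ to propagate fixation from a single panel across the building.
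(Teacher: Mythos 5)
Your cyclic scheme does not close because the step (iii) $\Rightarrow$ (ii) rests on an unjustified claim: that strong transitivity of $G$ makes every projectivity in $\Pi(\pi)$ coincide on $\Cham(\pi)$ with the action of some element of $\Stab_G(\pi)$. Strong transitivity only gives transitivity on pairs (apartment, chamber); it cannot prescribe how an automorphism acts on the whole infinite set $\Cham(\pi)$, and a perspectivity $\proj_{\pi'}\!\mid_\pi$ between opposite panels cannot even be realized by a type-preserving automorphism. In fact, realizing perspectivities by automorphisms is exactly the kind of statement (existence of root elations) that is essentially equivalent to the Moufang property, so assuming it here begs the question the theorem is meant to settle. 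The paper links (ii) and (iii) in the opposite direction (Proposition~\ref{proposition:proj-CG}): from the convergence hypothesis on $\overline{\Pi}(\pi)$ and Knarr's $2$-transitivity, Theorem~\ref{theorem:Carette} yields a locally finite tree $T(\pi)$; one then shows the tree is canonically encoded in the group structure of $\overline{\Pi}(\pi)$ (vertex stabilizers are characterized among maximal compact subgroups), so that $\Stab_{\Auttop(\Delta)}(\pi)$, which \emph{normalizes} $\Pi(\pi)$, acts on $T(\pi)$ with equivariant boundary identification; the closure of the image of $\Stab_G(\pi)$ is then a closed subgroup of $\Aut(T(\pi))$, hence a convergence group. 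A similar caveat applies to your (i) $\Rightarrow$ (iii): attractor--repeller dynamics for $\langle U_\alpha, U_{\alpha'}\rangle$ would at best handle the image of that subgroup, not the full closure of the image of $\Stab_G(\pi)$; some normalization/canonical-tree argument (or the paper's route through \cite{Grundhofer} and panel trees) is needed.

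The implication you rightly call the hardest is not proved in your outline: it stops exactly where the real work begins. Concretely, (a) a $2$-transitive convergence group does not by itself contain a subgroup acting sharply transitively on $\Cham(\pi)\setminus\{c_0\}$, and even if it did inside $\overline{\Pi}(\pi)$, its elements are homeomorphisms of one panel, not automorphisms of $\Delta$; the asserted ``lift to a subgroup of $\Stab_G(\pi)$'' is not something strong transitivity provides. (b) The passage from panel-level dynamics to pointwise fixation of a root is the heart of the theorem, and your sketch merely restates it as a requirement. In the paper this is done (Theorem~\ref{theorem:Moufang}) by letting $H=\Fix_G(A)$ act simultaneously on all panel trees $T(v_j)$ along an apartment, analysing translation directions to produce, for each $i$, an element $t\in H$ fixing $\ell_{v_i}$ and $\ell_{v_{i+m}}$ pointwise but translating all other $\ell_{v_j}$ (Proposition~\ref{proposition:particular2}); then applying the Baumgartner--Willis decomposition $\Par_{W}(t)=\Con_{W}(t)\cdot(\Par_{W}(t)\cap\Par_{W}(t^{-1}))$ (Theorem~\ref{theorem:Baumgartner}) in the totally disconnected locally compact groups $W_i=V_i\rtimes\langle t\rangle$, with the compactness machinery of Section~\ref{section:criterion} (Proposition~\ref{proposition:closer} and its refinements) used to prove $\Par_{W_i}(t)=W_i$, and an induction on the kernels $V_{i+1}=\Ker(V_i\curvearrowright\Cham(v_{i+1}))$ to show the root group acts transitively. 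None of these ingredients, nor substitutes for them, appear in your proposal, so the decisive step remains a genuine gap.
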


The notion of a convergence group is recalled in Subsection~\ref{subsection:convergence} while the group of projectivities $\Pi(\pi)$ is defined in Subsection~\ref{subsection:projectivities}. Conditions (i) and (ii) are both independent of the group $G$. In fact, Condition (ii) in Theorem~\ref{theorem:Characterization} implies in particular that the closure in $\Homeo(\Cham(\pi))$ of the group of projectivities $\Pi(\pi)$ is a locally compact group (see Lemma~\ref{lemma:lc} below). In the case of \emph{connected} compact polygons, the latter property in turn  is known to be equivalent to the Moufang condition, without assuming any extra homogeneity assumption on $\Delta$ a priori (see \cite{Lowen}*{Theorem~5.1} or \cite{Salzmann}*{Theorem~66.1}). If one believes that the connected case is representative of the general case, one should therefore not expect Condition~(ii) from Theorem~\ref{theorem:Characterization} to be satisfied by all infinite compact totally disconnected polygons.

We point out the following corollary of Theorem~\ref{theorem:Characterization}.

\begin{maincorollary}[Caprace--Monod]\label{corollary:affine}
Let $X$ be a locally finite thick irreducible affine building of rank~$3$. If $X$ is strongly transitive, then the building at infinity $X_\infty$ is Moufang. 
In particular $X$ is the Bruhat--Tits building associated with a semisimple algebraic group over a non-Archimedean local field.
\end{maincorollary}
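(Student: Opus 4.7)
The plan is to derive the Corollary from Theorem~A applied to the building at infinity. Since $X$ is a locally finite thick irreducible affine building of rank~$3$, its spherical building at infinity $X_\infty$ is an infinite thick irreducible compact totally disconnected spherical building of rank~$2$; in other words it is a compact $m$-gon with $m\geq 3$ satisfying the hypotheses of Theorem~A. Strong transitivity of $\Aut(X)$ on $X$ descends to strong transitivity on $X_\infty$, and the canonical homomorphism $\Aut(X)\to\Auttop(X_\infty)$ has closed image; let $G$ denote this image. Then $G$ is a closed strongly transitive subgroup of $\Auttop(X_\infty)$, so it suffices to verify Condition~(iii) of Theorem~A.

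Fix a panel $\pi$ of $X_\infty$. The key structural input is the well-known ``tree at infinity'' picture: $\pi$ corresponds to a parallel class of sector-panels of $X$, and attached to $\pi$ there is a canonical locally finite tree $T_\pi$ (the \emph{panel tree} associated with $\pi$) whose set of ends is $\Stab_G(\pi)$-equivariantly homeomorphic to $\Cham(\pi)$. Such trees arise naturally in the study of affine buildings, with vertices indexing residues of $X$ of suitable type along walls transverse to the direction of $\pi$; the local finiteness of $T_\pi$ reflects local finiteness of $X$ itself, and the resulting homeomorphism $\partial T_\pi\cong\Cham(\pi)$ intertwines the natural topologies on both sides.

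Having identified $\Cham(\pi)$ with $\partial T_\pi$, Condition~(iii) follows readily: any group of automorphisms of a locally finite tree acts as a convergence group on its space of ends, hence the image of $\Stab_G(\pi)$ in $\Homeo(\Cham(\pi))$ is a convergence group. Since the convergence property is preserved under taking closures in $\Homeo(\Cham(\pi))$, the closure is again a convergence group. Theorem~A then yields that $X_\infty$ is Moufang. The second assertion of the Corollary now follows from the Bruhat--Tits--Weiss structure theorem: a locally finite thick irreducible affine building of rank at least~$3$ whose building at infinity is Moufang is the Bruhat--Tits building of a semisimple algebraic group over a non-Archimedean local field.

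The main obstacle is the construction of the panel tree $T_\pi$ together with the $\Stab_G(\pi)$-equivariant homeomorphism $\partial T_\pi\cong\Cham(\pi)$ compatible with the ambient topologies; this is a classical piece of affine building theory but needs to be set up carefully so that the continuity of the $G$-action on $X_\infty$ translates into continuity of the tree action. Everything else reduces to assembling Theorem~A with well-established structural results.
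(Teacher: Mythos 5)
Your argument is correct and follows the paper's overall strategy (panel trees plus Theorem~\ref{theorem:Characterization}), but it takes a slightly different route through the theorem. The paper verifies condition~(ii): Proposition~\ref{proposition:equivariant} only records that the panel tree $T(\pi)$ is equivariant under the group of projectivities $\Pi(\pi)$, whence Proposition~\ref{proposition:affine} gives the convergence property for $\overline{\Pi}(\pi)$, and the transfer to $\overline{\Stab_G(\pi)}$ is then done abstractly by Proposition~\ref{proposition:proj-CG}, which reconstructs the tree from the maximal compact subgroups of $\overline{\Pi}(\pi)$ via Carette--Dreesen. You instead verify condition~(iii) directly, by asserting that the panel-tree identification $\partial T_\pi \cong \Cham(\pi)$ is equivariant under the full stabilizer $\Stab_G(\pi)$; this is indeed available in the classical construction (Ronan, Weiss), and it lets you bypass Proposition~\ref{proposition:proj-CG} entirely, at the cost of needing this stronger equivariance input, which is exactly the ``careful set-up'' you flag as the main obstacle. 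One justification in your last step should be repaired: the phrases ``any group of automorphisms of a locally finite tree acts as a convergence group on its ends'' and ``the convergence property is preserved under taking closures'' are not quite right as stated, since properness of the action on the triple space can fail for a non-closed subgroup with the subspace topology (and, by Lemma~\ref{lemma:lc}, a genuine convergence subgroup of $\Homeo(M)$ is automatically closed, so the second phrase is circular). The correct one-line fix, which is what the paper does, is to note that the closure of the image of $\Stab_G(\pi)$ in $\Homeo(\Cham(\pi)) \cong \Homeo(\partial T_\pi)$ lies in $\Aut(T_\pi)$ because $\Aut(T_\pi)$ is closed in $\Homeo(\partial T_\pi)$, and then invoke Lemma~\ref{lemma:tree} together with the fact that the restriction of a proper action to a closed subgroup is proper. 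With that adjustment your proof matches the paper's in substance: both conclude via Theorem~\ref{theorem:Moufang} and the classification of Moufang, respectively Bruhat--Tits, buildings.
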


The special case of Corollary~\ref{corollary:affine} where $X$ is of type $\tilde{A}_2$ was obtained by Van Maldeghem and Van Steen in~\cite{VM}*{Main Theorem}, while the general case has been obtained by Caprace and Monod in~\cite{CapraceMonod}*{Corollary~E} using CAT($0$) geometry. Our approach to Theorem~\ref{theorem:Characterization} is different, but was partly inspired by theirs: instead of the CAT($0$) Levi decomposition used in their work, we use  a purely algebraic counterpart of that result, due to Baumgartner and Willis~\cite{Baumgartner}, and valid in the realm of totally disconnected locally compact groups. 

\bigskip

The basic result that makes the structure theory of locally compact groups available in the study of compact buildings is the theorem, due to Burns--Spatzier and valid for all thick irreducible compact spherical building $\Delta$ of rank~$\geq 2$, asserting that the topological automorphism group $\Auttop(\Delta)$ is  a locally compact group, see \cite{Burns}*{Theorem~2.1}. Equivalently, there exists an identity neighbourhood in the group $\Auttop(\Delta)$ endowed with the compact-open topology that is compact. Our second main result is a compactness criterion for more general subsets of $\Auttop(\Delta)$ in the rank~$2$ case. Notice that we do not make any transitivity assumption on the automorphism group.

\begin{maintheorem}\label{theorem:Criterion}
Let $\Delta$ be a thick compact $m$-gon with $m \geq 3$ and $C, C'$ be opposite chambers of $\Delta$. Denote by $D_0$ and $D_1$ the two chambers adjacent to (but different from) $C$ in the apartment containing $C$ and $C'$ and by $v_0$ (resp. $v_1$) the common vertex of $C$ and $D_0$ (resp. $D_1$). Let also $E_0$ (resp. $E_1$) be a chamber having vertex $v_0$ (resp. $v_1$) but different from $C$ and $D_0$ (resp. $D_1$). Let finally $U$ and $U'$ be closed subsets of $\Cham\Delta$ such that every chamber of $U$ is opposite every chamber of $U'$. Then for all $\varepsilon > 0$, the (possibly empty) set
$$J_\varepsilon(C,C',U,U',E_0,E_1) := \left\{ \varphi \in \Auttop(\Delta) \suchthat
\begin{array}{l}
\varphi(C) \in U, \ \varphi(C') \in U', \\
\rho(\varphi(C), \varphi(B_{v_i}(D_i, \varepsilon))) \geq \varepsilon  \ \ \forall i \in \{0,1\},\\
\rho(\varphi(D_i), \varphi(B_{v_i}(C, \varepsilon))) \geq \varepsilon  \ \ \forall i \in \{0,1\},\\
\rho(\varphi(C), \varphi(E_i)) \geq \varepsilon  \ \ \forall i \in \{0,1\},\\
\rho(\varphi(D_i), \varphi(E_i)) \geq \varepsilon  \ \ \forall i \in \{0,1\}\\
\end{array}\
\right\}$$
is compact, where $\rho$ is some fixed metric associated to the topology on $\Cham\Delta$ and $B_{v}(C, r)$ denotes the open ball in $\Cham(v)$ centered at $C$ and of radius $r$ (with respect to the metric $\rho$).
\end{maintheorem}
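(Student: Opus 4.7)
The plan is to show that $J_\varepsilon$ is closed in $\Auttop(\Delta)$ and relatively compact in the space $C(\Cham(\Delta), \Cham(\Delta))$ of continuous self-maps of the compact metric space $(\Cham(\Delta), \rho)$, so that its compactness follows from Arzel\`a--Ascoli together with the observation that the subsequential limits lie in $\Auttop(\Delta)$. Closedness is routine: each defining condition is stable under uniform convergence of automorphisms---the membership conditions $\varphi(C) \in U$, $\varphi(C') \in U'$ use that $U, U'$ are closed, and the distance inequalities are non-strict inequalities between functions depending continuously on $\varphi$ (the ones involving $\varphi(B_{v_i}(Z,\varepsilon))$ via the Hausdorff distance).

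For relative compactness I would fix a sequence $(\varphi_n) \subset J_\varepsilon$ and extract, by compactness of $\Cham(\Delta)$, a subsequence along which $\varphi_n(C) \to \tilde C$, $\varphi_n(C') \to \widetilde{C'}$, $\varphi_n(D_i) \to \tilde D_i$ and $\varphi_n(E_i) \to \tilde E_i$ for $i \in \{0,1\}$. Since $C \in B_{v_i}(C,\varepsilon)$ (and similarly $D_i \in B_{v_i}(D_i,\varepsilon)$), the ball conditions already imply $\rho(\tilde C, \tilde D_i) \geq \varepsilon$; combined with the conditions on $E_i$, this gives that $\tilde C, \tilde D_i, \tilde E_i$ are three pairwise $\varepsilon$-separated chambers. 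They share a common limit vertex $\tilde v_i = \lim \varphi_n(v_i)$ (vertex-incidence being a closed condition), and $\tilde C, \widetilde{C'}$ are opposite (being limits in the closed mutually opposite sets $U, U'$). The key technical step is now equicontinuity of the family $\{\varphi_n\}$. I would first work on each panel $\Cham(v_i)$ separately: the ball conditions prevent shrinking neighborhoods of $C$ and $D_i$ in $\Cham(v_i)$ from being mapped towards each other's anchor, and the auxiliary chamber $E_i$ excludes a convergence-group-style collapse of the three-point configuration $\{\varphi_n(C), \varphi_n(D_i), \varphi_n(E_i)\}$. A contradiction argument --- extract a pair of sequences $X_n, Y_n \in \Cham(v_i)$ approaching each other but with $\varphi_n(X_n), \varphi_n(Y_n)$ staying $\eta$-apart, and analyze to which of the three anchors the common limit point belongs --- should conclude that $\varphi_n|_{\Cham(v_i)}$ has a uniformly convergent subsequence.

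With equicontinuity secured on $\Cham(v_0) \cup \Cham(v_1)$, I would propagate it to $\Cham(\Delta)$ via the projective structure of the compact $m$-gon. Projection from a chamber opposite to a given chamber of a panel $\pi$ is a homeomorphism from the open dense set of opposite chambers onto $\Cham(\pi)$ minus one point, and automorphisms intertwine such projections. Using $\varphi_n(C')$ --- which has a controlled limit $\widetilde{C'}$ opposite $\tilde C$ --- as the base of projections, uniform control on the panels of $C$ transfers to uniform control on the panels of $C'$; walking along the ordinary $2m$-cycle through $C, C'$ then propagates control to every panel of the apartment through $C$ and $C'$, and a final projection step extends this globally. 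Arzel\`a--Ascoli yields a uniform subsequential limit $\varphi$, which preserves adjacency by passage to the limit; running the same extraction on $(\varphi_n^{-1})$ --- whose three-point non-degeneracy at the vertices $\tilde v_i$ is inherited from the limit configuration $\tilde C, \tilde D_i, \tilde E_i$ --- produces a continuous inverse for $\varphi$, so $\varphi \in \Auttop(\Delta)$, and $J_\varepsilon$ is compact.

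The hardest step is the equicontinuity of $\varphi_n|_{\Cham(v_i)}$. The defining conditions of $J_\varepsilon$ appear carefully tailored to rule out exactly the degenerations (collapse of a family of homeomorphisms of a compact space to one or two limit points) that characterize the convergence group formalism, but converting this heuristic into a rigorous argument requires a delicate interplay between the homeomorphism property of each $\varphi_n$ and the topological structure of $\Cham(v_i)$.
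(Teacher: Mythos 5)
Your overall skeleton (closedness plus an Arzel\`a--Ascoli/equicontinuity argument, with projections used to move information between panels) is indeed the frame of the paper's proof, but the step you yourself flag as the hardest one is not just ``delicate'': as sketched, it is wrong, and it is precisely where all the content of the theorem lies. Your proposed contradiction argument on a panel $\Cham(v_i)$ uses only the \emph{pointwise} separation of the three anchors $\varphi_n(C),\varphi_n(D_i),\varphi_n(E_i)$ and then ``analyzes to which of the three anchors the common limit point belongs''. But a failure of equicontinuity produces sequences $X_n,Y_n$ whose common limit can be any chamber of $\Cham(v_i)$, typically far from $C$, $D_i$ and $E_i$, and a condition imposed at finitely many chambers says nothing directly about what $\varphi_n$ does there. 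If separation of the three anchors alone sufficed, you would have proved compactness of the set $\tilde{J}_\varepsilon(C,C',U,U',E_0,E_1)$ defined without the ball conditions --- which the paper explicitly states as an open question (and which generalizes the open four-point stabilizer conjecture for compact projective planes). In other words, your sketch never uses the conditions $\rho(\varphi(C),\varphi(B_{v_i}(D_i,\varepsilon)))\geq\varepsilon$ and $\rho(\varphi(D_i),\varphi(B_{v_i}(C,\varepsilon)))\geq\varepsilon$, yet these are exactly what make the statement provable: they give a \emph{uniform} (ball-sized) separation at the start, which the paper encodes as the pairs $(C,D_i)$, $(D_i,C)$ being ``$\varepsilon$-correct''.

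The missing machinery is how this uniform control is propagated and then turned into a contradiction. The paper does not prove equicontinuity panel-by-panel at $v_0,v_1$ and then spread it; instead it (a) reduces non-compactness to the existence of two collapsed sequences of vertices and, via Propositions~\ref{proposition:centered}--\ref{proposition:centerokbetter}, arranges them to be centered at a vertex of the apartment $A$ with one sequence inside $A$ (this already needs $m\geq 3$ and control of the image apartment, which is why both cases $\varphi_n\in J_\varepsilon$ and $\varphi_n\in J_\varepsilon^{-1}$ are treated with \emph{sequences} of anchor configurations); (b) propagates $\eta$-correctness from the two seeded vertices to every pair of adjacent chambers of $A$ by projecting through the outside chambers $E_0,E_1$ (Claim~1 in the paper, resting on Lemma~\ref{lemma:technical}); (c) uses the key Lemma~\ref{lemma:key} to upgrade pointwise convergence of $\varphi_n X^{(n)}$ to convergence on a whole ball $B_{v_0}(X,\eta_X)$ for \emph{every} $X\in\Cham(v_0)$, via auxiliary apartments through $X$, $C$, $D_0$ and the auxiliary sequences of Claim~2; and (d) concludes by covering the compact panel $\Cham(v_0)$ with finitely many such balls and using that panels have no isolated chambers (Lemma~\ref{lemma:non-isolated}), so that the images $\varphi_n(\Cham(v_0^{(n)}))$ cannot accumulate on only finitely many chambers --- contradicting surjectivity of $\varphi_n$ onto an infinite panel. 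None of steps (a)--(d) appears in your proposal, and the final contradiction is not of the form ``the collapse point must be one of the anchors''; so as it stands the argument has a genuine gap at its core, not merely a technical one.
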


\begin{figure}
\centering
\includegraphics{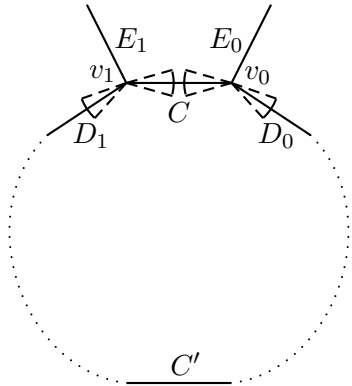}
\caption{Illustration of Theorem~\ref{theorem:Criterion}.}\label{picture:Criterion}
\end{figure}

Remark that the conclusion of Theorem~\ref{theorem:Criterion} is also valid for irreducible compact spherical buildings of higher rank, but the proof that we are aware of  in that case is very indirect: one deduces it from the fact that those buildings are Moufang, hence are buildings at infinity of Bruhat-Tits buildings, via the works of Tits~\cite{Tits}*{Addenda} and  Grundh\"ofer--Kramer--Van Maldeghem--Weiss~\cite{Grundhofer}*{Theorem~1.1}.

A particular case of this criterion is the following result, which can be seen as a topological version of a well-known theorem \cite{Tits}*{Theorem~4.1.1} of Tits. Indeed, the latter basically states that $L_0(C,C')$ (as defined below) is reduced to the identity in every thick (not necessarily topological) spherical building.

\begin{maincorollary}\label{corollary:Epsilon}
Let $\Delta$ be a thick compact $m$-gon with $m \geq 3$ and $C, C'$ be opposite chambers of $\Delta$. There exists $\varepsilon > 0$ such that the set
$$L_\varepsilon(C,C') := \left\{ \varphi \in \Auttop(\Delta) \mid \rho(X, \varphi (X)) \leq \varepsilon \ \ \forall X \in E_1(C) \cup \{C'\}
 \right\}$$
is compact, where $\rho$ is some fixed metric associated to the topology on $\Cham\Delta$ and $E_1(C)$ denotes the set of chambers adjacent to $C$.
\end{maincorollary}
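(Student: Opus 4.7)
The plan is to deduce the corollary from Theorem~\ref{theorem:Criterion} by exhibiting, for a sufficiently small $\varepsilon > 0$, an inclusion $L_\varepsilon(C,C') \subseteq J_\varepsilon(C,C',U,U',E_0,E_1)$ for a well-chosen quadruple $(U, U', E_0, E_1)$. Since $L_\varepsilon(C,C')$ is clearly closed in $\Auttop(\Delta)$ for the compact-open topology (each of its defining constraints survives pointwise convergence on $\Cham\Delta$), the compactness of $J_\varepsilon$ provided by Theorem~\ref{theorem:Criterion} will immediately yield the compactness of $L_\varepsilon(C,C')$.

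First I would fix the data. By thickness, I can pick $E_i \in \Cham(v_i)$ distinct from both $C$ and $D_i$ for each $i \in \{0,1\}$. The six chambers $C, C', D_0, D_1, E_0, E_1$ are then pairwise distinct, so
$$\alpha := \min\bigl\{\rho(X,Y) \suchthat X, Y \in \{C, C', D_0, D_1, E_0, E_1\},\ X \neq Y\bigr\}$$
is strictly positive. Using that opposition is an open condition in $\Cham\Delta \times \Cham\Delta$, I can also find $\eta > 0$ for which every chamber of $\overline{B(C,\eta)}$ is opposite every chamber of $\overline{B(C', \eta)}$. Setting $\varepsilon := \min(\eta, \alpha/4)$, $U := \overline{B(C,\varepsilon)}$ and $U' := \overline{B(C',\varepsilon)}$ then produces parameters of the type required by Theorem~\ref{theorem:Criterion}.

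The only remaining step, which I expect to be entirely routine, is to verify $L_\varepsilon(C,C') \subseteq J_\varepsilon(C,C',U,U',E_0,E_1)$. The key observation is that $E_1(C)$ contains every chamber of $\Delta$ sharing a vertex with $C$, hence contains both panels $\Cham(v_0)$ and $\Cham(v_1)$ entirely; in particular it contains the balls $B_{v_i}(D_i,\varepsilon)$ and $B_{v_i}(C,\varepsilon)$. Therefore, for $\varphi \in L_\varepsilon(C,C')$, the hypothesis $\rho(X, \varphi(X)) \leq \varepsilon$ applies to every chamber $X$ appearing in the definition of $J_\varepsilon$. The membership conditions $\varphi(C) \in U$ and $\varphi(C') \in U'$ then follow directly, while each of the four remaining lower-bound conditions reduces, via the triangle inequality $\rho(\varphi(Y), \varphi(X)) \geq \rho(Y,X) - 2\varepsilon$, to the estimate $\rho(Y,X) \geq \alpha - \varepsilon \geq 3\alpha/4$, yielding $\rho(\varphi(Y), \varphi(X)) \geq 3\alpha/4 - 2\varepsilon \geq \alpha/4 \geq \varepsilon$. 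The only mild subtlety---the main potential obstacle---is to unpack the definition of $E_1(C)$ so as to be sure the balls $B_{v_i}(\cdot,\varepsilon)$ lie inside it; once that is noted, Theorem~\ref{theorem:Criterion} delivers the compactness of $J_\varepsilon$, and hence of $L_\varepsilon(C,C')$.
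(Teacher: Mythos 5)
Your proposal is correct and follows essentially the same route as the paper: one chooses $\varepsilon$ small enough that opposition holds on $\overline{B(C,\varepsilon)}\times\overline{B(C',\varepsilon)}$ (Proposition~\ref{proposition:opposite}) and that $C$, $D_i$, $E_i$ are $\varepsilon$-separated, then observes that $L_\varepsilon(C,C')$ is a closed subset of $J_\varepsilon(C,C',\overline{B(C,\varepsilon)},\overline{B(C',\varepsilon)},E_0,E_1)$, which is compact by Theorem~\ref{theorem:Criterion}. Your explicit triangle-inequality verification (with the convention that $E_1(C)$, being the gallery ball of radius one, contains $C$ and the panels $\Cham(v_0)$, $\Cham(v_1)$) just spells out what the paper leaves as ``immediate.''
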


We remark that the local compactness theorem of Burns and Spatzier in the rank~$2$ case can now be recovered as an immediate consequence of Corollary~\ref{corollary:Epsilon}. Actually, our proof of Theorem~\ref{theorem:Criterion} was inspired by their work; note however that  our approach is uniform in the sense that it does not require to distinguish the cases where the diameter of $\Delta$ is even or odd. In the particular case of compact projective planes (i.e. for $m = 3$), this result was also proved by Grundhöfer in a very short and elegant way, see~\cite{Grundhoferbis}*{Theorem~1}.

\begin{maincorollary}[Burns--Spatzier]\label{corollary:lc}
Let $\Delta$ be a thick compact $m$-gon with $m \geq 3$. Then $\Auttop(\Delta)$ is locally compact.
\end{maincorollary}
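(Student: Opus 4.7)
\textbf{Proof proposal for Corollary~\ref{corollary:lc}.} The plan is to deduce the result directly from Corollary~\ref{corollary:Epsilon}: the essential point is that, for a suitable choice of the parameter, the set $L_\varepsilon(C,C')$ provided by that corollary both contains an identity neighbourhood of $\Auttop(\Delta)$ in the compact-open topology and is compact, and is therefore itself a compact identity neighbourhood.

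Fix opposite chambers $C, C'$ of $\Delta$ and let $v_0, v_1$ be the two vertices of $C$. Since $E_1(C) \subseteq \Cham(v_0) \cup \Cham(v_1)$ and each $\Cham(v_i)$ is compact, the set $K := \overline{E_1(C)} \cup \{C'\}$ is a compact subset of $\Cham\Delta$. First I would observe that for each fixed $\varphi \in \Auttop(\Delta)$, the map $X \mapsto \rho(X, \varphi(X))$ is continuous on $\Cham\Delta$, so that the defining inequalities of $L_\varepsilon(C,C')$ hold for all $X \in K$ as soon as they hold for $X \in E_1(C) \cup \{C'\}$. In particular $L_\varepsilon(C,C')$ coincides with the set of $\varphi \in \Auttop(\Delta)$ satisfying $\rho(X, \varphi(X)) \leq \varepsilon$ for every $X \in K$.

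Next, since $\Cham\Delta$ is compact Hausdorff, the evaluation map $\Auttop(\Delta) \times \Cham\Delta \to \Cham\Delta$ is jointly continuous for the compact-open topology, and therefore $M(\varphi) := \sup_{X \in K} \rho(X, \varphi(X))$ depends continuously on $\varphi$. The subset
$$N := \{\varphi \in \Auttop(\Delta) \mid M(\varphi) < \varepsilon\}$$
is then an open neighbourhood of $\id$ contained in $L_\varepsilon(C,C')$. Choosing $\varepsilon > 0$ as supplied by Corollary~\ref{corollary:Epsilon} makes $L_\varepsilon(C,C')$ compact, and hence a compact neighbourhood of the identity in $\Auttop(\Delta)$. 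Being a Hausdorff topological group admitting a compact identity neighbourhood, $\Auttop(\Delta)$ is then locally compact everywhere by translation, as desired.

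There is no serious obstacle in this deduction: the hard work is entirely packaged inside Corollary~\ref{corollary:Epsilon}, and the only non-formal point here is the verification that $L_\varepsilon(C,C')$ really is a neighbourhood of $\id$, which in turn reduces to the standard continuity of evaluation for the compact-open topology on homeomorphism groups of compact Hausdorff spaces.
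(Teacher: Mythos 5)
Your proposal is correct and follows exactly the paper's route: the paper likewise deduces Corollary~\ref{corollary:lc} by taking opposite chambers $C,C'$ and observing that $L_\varepsilon(C,C')$ is both a neighbourhood of the identity and compact (for the $\varepsilon$ of Corollary~\ref{corollary:Epsilon}). You merely make explicit the verification that $L_\varepsilon(C,C')$ is a neighbourhood of $\id$, which the paper states without proof.
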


One could ask if the definition of the set $J_\varepsilon(C,C',U,U',E_0,E_1)$ whose compactness is ensured by Theorem~\ref{theorem:Criterion} is optimal: is there any natural bigger subset of $\Auttop(\Delta)$ whose compactness could be proved as well for all compact polygons? In that direction, we suggest the following question; we feel that the answer is likely to be positive.

\begin{question*}
Under the same hypotheses and with the same notation as in Theorem~\ref{theorem:Criterion}, is the following set compact?
$$\tilde{J}_\varepsilon(C,C',U,U',E_0,E_1) := \left\{ \varphi \in \Auttop(\Delta) \suchthat
\begin{array}{l}
\varphi(C) \in U, \ \varphi(C') \in U', \\
\rho(\varphi(C), \varphi(E_i)) \geq \varepsilon  \ \ \forall i \in \{0,1\},\\
\rho(\varphi(D_i), \varphi(E_i)) \geq \varepsilon  \ \ \forall i \in \{0,1\}\\
\end{array}\
 \right\}.$$
\end{question*}

Actually, this question is a natural generalization of the well-known conjecture that in any compact projective plane, the stabilizer of four points, none three of which  are collinear, is compact (see~\cite{Salzmann}*{Section~44}). It should also be emphasized that this question is strongly related to the conjecture mentioned at the beginning of this introduction. Indeed, in order to prove the conjecture, it only remains to show that Condition (ii) or (iii) in Theorem~\ref{theorem:Characterization} is satisfied by any strongly transitive compact polygon. A positive answer to the above question would ensure that a slightly weaker condition than (iii) holds for all compact polygons; in view of Theorem~\ref{theorem:Characterization}, this would be an important step toward the proof of the conjecture.

\bigskip

The proof of Theorem~\ref{theorem:Criterion} is given in Section~\ref{section:criterion}, while Theorem~\ref{theorem:Characterization} is proved in Section~\ref{section:characterization} (see Corollary~\ref{corollary:MoufangCG}, Proposition~\ref{proposition:proj-CG} and Theorem~\ref{theorem:Moufang}). Some of the preparatory results of Section~\ref{section:criterion} are also used in Section~\ref{section:characterization} to prove the implication (iii) $\Rightarrow$ (i) of Theorem~\ref{theorem:Characterization}.

\subsection*{Acknowledgement}

The results of this paper are the outcomes of my Master's thesis, which was supervised by Pierre-Emmanuel Caprace. I~am very grateful to him for suggesting the topic and for his support and guidance throughout the preparation of this work. I~also thank Linus Kramer for several comments on the history of compact polygons.

\section{Preliminaries}

In this section, we give the definition and first properties of compact spherical buildings and recall what the strong transitivity and the Moufang property exactly are.

\subsection{Compact spherical buildings}

Let $\Delta$ be a spherical building of rank~$k$ viewed as a simplicial complex, and denote by $\Delta_r$ the set of simplices of dimension~$r-1$ in $\Delta$ (for $r \in \{1, \ldots, k\}$). We fix an ordering of the $k$ types of vertices in $\Delta$. Then $\Delta$ is called a \textbf{compact spherical building} if the set $\Vertex\Delta := \Delta_1$ carries a compact topology such that the set $\Cham\Delta := \Delta_k$ is closed in $(\Vertex\Delta)^k$ (where a chamber of $\Delta$ is viewed as a $k$-tuple of vertices, ordered according to their type). For each $r \in \{1, \ldots, k\}$, $\Delta_r$ is given the induced topology from the product topology on $(\Vertex\Delta)^r$. We also say that $\Delta$ is connected or totally disconnected when $\Cham\Delta$ has the said property. When $\Delta$ is thick and has no factor of rank~$1$, one can actually show that the topology on $\Vertex\Delta$ is metrizable (see~\cite{Grundhofer}*{Proposition~6.14}). We will therefore always suppose that the topology comes from a metric. The \textbf{topological automorphism group} $\Auttop(\Delta)$ of $\Delta$ is the group of all automorphisms of $\Delta$ whose restriction to each $\Delta_r$ are homeomorphisms. The group $\Auttop(\Delta)$ is endowed with the topology induced from the compact-open topology of $C(\Cham\Delta,\Cham\Delta)$ (where $C(X,Y)$ denotes the set of continuous maps from $X$ to $Y$). Since $\Cham\Delta$ is compact and metrizable, it also actually is the topology of uniform convergence.

We can now give the first properties of compact buildings. The following results all come from article~\cite{Burns} of Burns and Spatzier and are almost direct consequences of the definition of compact buildings, except Lemma~\ref{lemma:non-isolated} whose proof is more technical.

\begin{proposition}\label{proposition:opposite}
Let $\Delta$ be a compact spherical building of rank~$k$. The set
$$\{(R,R') \in \Delta_r^2 \mid R \text{ is opposite } R'\}$$
is open in $\Delta_r^2$ for each $1 \leq r \leq k$.
\end{proposition}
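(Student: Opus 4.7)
The plan is to show that the complementary set of non-opposite pairs is closed in $\Delta_r^2$. As preliminaries, $\Cham\Delta$ is compact, being closed in the compact space $(\Vertex\Delta)^k$, and each $\Delta_r$ is compact Hausdorff as the continuous image of $\Cham\Delta$ in $(\Vertex\Delta)^r$ under a coordinate-forgetting projection.

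I would first handle the chamber case $r = k$. Let $N$ denote the Coxeter diameter of $\Delta$. Two chambers $C, C'$ are non-opposite if and only if their gallery distance is at most $N-1$, equivalently there exists a tuple $(D_0, \ldots, D_{N-1}) \in (\Cham\Delta)^N$ with $D_0 = C$, $D_{N-1} = C'$, and each consecutive pair $(D_i, D_{i+1})$ either equal or adjacent (sharing a panel). Adjacency is a closed condition on $(\Cham\Delta)^2$, since it amounts to agreement on $k-1$ of the $k$ vertex-coordinates, a finite union of closed conditions. The set of such tuples is therefore closed in the compact space $(\Cham\Delta)^N$, and its image under the continuous projection to $(D_0, D_{N-1})$ is a closed subset of the Hausdorff space $(\Cham\Delta)^2$, namely the set of non-opposite chamber pairs. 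Thus the set of opposite chamber pairs is open.

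For general $r$, I would invoke the standard building-theoretic characterization: $(R, R') \in \Delta_r^2$ is opposite if and only if there exist opposite chambers $C \supseteq R$, $C' \supseteq R'$. Consequently the set of opposite simplex pairs coincides with the image of the open set of opposite chamber pairs under the face map $\pi_r \times \pi_r : (\Cham\Delta)^2 \to \Delta_r^2$. Its openness follows from the openness of the face map $\pi_r$, which is a foundational structural property of compact buildings asserting that a nearby simplex admits an extension to a nearby chamber.

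I anticipate the main obstacle lies in verifying the openness of the face map $\pi_r$: while standard, it requires extracting a continuous chamber-extension property from the mere definition that $\Cham\Delta$ is closed in $(\Vertex\Delta)^k$ (note in particular that opposite chamber pairs are in general only open and not closed, so a naive subsequence argument from the non-opposite side will fail). For the chamber case $r = k$, which is what is primarily needed in the rest of this paper, the gallery argument above is fully self-contained and sidesteps this issue entirely.
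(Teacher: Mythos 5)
Your argument for the chamber case $r=k$ is correct and self-contained: non-opposition of chambers is equivalent to gallery distance at most $N-1$, the set of padded galleries is closed because ``equal or adjacent'' means agreement in at least $k-1$ of the $k$ vertex coordinates, and the projection of this compact set onto the pair of end chambers is closed in the Hausdorff space $(\Cham\Delta)^2$. (For the record, the paper itself gives no argument here; it simply cites Burns--Spatzier, Proposition~1.9, so there is no internal proof to compare against.)

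The reduction to general $r$, however, contains a genuine error. The characterization ``$R$ is opposite $R'$ if and only if there exist opposite chambers $C \supseteq R$, $C' \supseteq R'$'' is false. Concretely, in a projective plane (the case $m=3$, which is within the scope of this paper), let $p \neq q$ be two points on a common line $L$; choosing a line $M \ni p$ with $M \neq L$ and a line $L' \ni q$ with $L' \neq L$, the flags $\{p,M\}$ and $\{q,L'\}$ are opposite chambers, yet the vertices $p$ and $q$ satisfy $\D(p,q)=2$ and are not opposite. Thus the image of the open set of opposite chamber pairs under $\pi_r \times \pi_r$ is strictly larger than the set of opposite pairs in $\Delta_r^2$, and your argument establishes openness of the wrong set. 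The correct statement requires in addition that the type of $R'$ be the $w_0$-opposite of the type of $R$ (then, inside a common apartment of $C$ and $C'$, uniqueness of the face of a given type of the chamber opposite $C$ forces $R'$ to be the opposite of $R$); since type classes of vertices are clopen, this part is repairable. But even after that repair, your second ingredient --- openness of the face map $\pi_r : \Cham\Delta \to \Delta_r$ --- is exactly the nontrivial content and is left unproven; it does not follow formally from the definition (closedness of $\Cham\Delta$ in $(\Vertex\Delta)^k$) and is of essentially the same depth as the proposition itself, as you acknowledge. Since the paper uses the statement precisely for $r<k$ (opposite and almost opposite vertices of a polygon, e.g.\ in Propositions~\ref{proposition:almost-opposite}, \ref{proposition:centerok} and \ref{proposition:centerokbetter}), the general case cannot be set aside, and as written the proposal does not prove it.
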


\begin{proof}
See \cite{Burns}*{Proposition~1.9}.
\end{proof}

\begin{proposition}\label{proposition:projections}
Let $C$ be a chamber of a compact spherical building $\Delta$ and $R$ be a face of a chamber opposite $C$. If $R_n \to R$ and $C_n \to C$, then $\proj_{R_n}C_n \to \proj_R C$.
\end{proposition}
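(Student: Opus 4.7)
The plan is to use compactness of $\Cham\Delta$ together with the openness of opposition (Proposition~\ref{proposition:opposite}) and the uniqueness of projections in spherical buildings. Write $D_n := \proj_{R_n}C_n$. Since $\Cham\Delta$ is compact, every subsequence of $(D_n)$ has a convergent sub-subsequence, so it suffices to show that the limit $D$ of any such convergent sub-subsequence $(D_{n_k})$ equals $\proj_R C$.

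First, I would check that $D_n$ is eventually well-defined. Since $R$ is a face of a chamber opposite $C$, it is opposite some face $R'$ of $C$. Denoting by $R'_n$ the face of $C_n$ of the same type as $R'$, one has $R'_n \to R'$ (the vertices of a chamber vary continuously), and Proposition~\ref{proposition:opposite} then forces $R_n$ to be opposite $R'_n$ for all large $n$. In particular, $R_n$ is a face of a chamber opposite $C_n$, so $D_n = \proj_{R_n}C_n$ is defined.

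For the identification of the limit, the inclusion $R_{n_k} \subset D_{n_k}$ together with the fact that ``containment of a simplex in a chamber'' is a closed condition in $\Delta_r \times \Cham\Delta$ (a matter of vertex-coordinates being equal) yields $R \subset D$. To upgrade this to $D = \proj_R C$, I would use minimal galleries: in a spherical building, the gallery distance from $C$ to $\proj_R C$ depends only on the type of $R$ (when $R$ is a face of a chamber opposite $C$); call this constant $d$. Picking minimal galleries $C_{n_k} = E^0_{n_k}, E^1_{n_k}, \ldots, E^d_{n_k} = D_{n_k}$ and extracting diagonally using the compactness of $\Cham\Delta$, I may assume $E^i_{n_k} \to E^i$ for each $i$. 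Consecutive chambers in each gallery share a common panel of a fixed type, which is a closed relation on $\Cham\Delta \times \Cham\Delta$ since $\Cham\Delta$ is closed in $(\Vertex\Delta)^k$; hence the limits form a gallery of length at most $d$ from $C$ to $D$. Combined with $R \subset D$, which forces the gallery distance from $C$ to $D$ to be at least $d$, and with uniqueness of the projection in spherical buildings, this gives $D = \proj_R C$.

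The main point requiring care is the diagonal extraction producing a limiting gallery from pointwise limits of its constituent chambers. The essential ingredients -- closedness of $\Cham\Delta$ in $(\Vertex\Delta)^k$ and closedness of the panel-sharing relation -- are, however, immediate from the definitions, so I expect no real obstacle: the whole argument reduces to a clean combination of Proposition~\ref{proposition:opposite} with the classical combinatorial properties of projections in spherical buildings.
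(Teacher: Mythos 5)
Your argument is correct, and in fact the paper gives no proof of this proposition at all: it simply cites Burns--Spatzier \cite{Burns}*{Proposition~1.10}, so your subsequence/limit argument is a self-contained substitute rather than a variant of an in-paper proof. Two spots deserve an explicit line. First, $\proj_{R_n}C_n$ is always defined (the gate property holds for every residue and chamber of any building), so well-definedness is not the issue; what your opposition argument really provides---and what you use later---is that for large $n$ the residue of $R_n$ contains a chamber opposite $C_n$, so that $d(C_n,\proj_{R_n}C_n)$ equals the constant $d$ depending only on the type of $R$ (namely the length of the shortest element of the coset $w_0W_J$, where $J$ is the cotype of $R$, which also justifies your assertion that $d(C,\proj_RC)$ depends only on the type). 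To get that chamber you should say why ``$R_n$ opposite the face $R'_n$ of $C_n$'' yields a chamber containing $R_n$ opposite $C_n$: take an apartment $A$ containing $R_n$ and $C_n$ (hence $R'_n$); since opposition of simplices is intrinsic, $R_n=\mathrm{op}_A(R'_n)\subseteq\mathrm{op}_A(C_n)$, and $\mathrm{op}_A(C_n)$ is opposite $C_n$. Second, in the limiting step the closed relation is ``coincide or share the panel of a given cotype,'' so the limit gallery may stammer; as you note, this still gives $d(C,D)\le d$, and since $R\subseteq D$ forces $d(C,D)\ge d(C,\proj_RC)=d$, the gate property (uniqueness of the nearest chamber in the residue of $R$) yields $D=\proj_RC$. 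With these lines added the proof is complete.
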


\begin{proof}
See \cite{Burns}*{Proposition~1.10}.
\end{proof}

\begin{corollary}\label{corollary:homeo}
Let $R$ and $R'$ be two opposite residues of a compact spherical building $\Delta$. The projection $\proj_R \mid_{R'} : \Cham R' \to \Cham R$ is a homeomorphism.
\end{corollary}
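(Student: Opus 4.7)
The plan has three parts: bijectivity, continuity, and upgrading a continuous bijection to a homeomorphism.

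First, I would recall the classical combinatorial fact from the theory of spherical buildings that when two residues $R$ and $R'$ are opposite, the projection $\proj_R$ restricts to a bijection $\Cham R' \to \Cham R$ whose inverse is the restriction $\proj_{R'}|_{\Cham R}$. This reflects the parallelism of opposite residues in a spherical building and is a purely combinatorial statement that does not involve the topology.

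Second, I would establish continuity of $\proj_R|_{\Cham R'}$ by a direct application of Proposition~\ref{proposition:projections}. Identify each residue with the simplex of which it is the residue via the standard correspondence. For a convergent sequence $C'_n \to C'_\infty$ in $\Cham R'$, the opposition of $R$ and $R'$ guarantees that the simplex $R$ is a face of some chamber opposite each of the $C'_n$ (and opposite $C'_\infty$), so the hypothesis of Proposition~\ref{proposition:projections} is satisfied with the constant sequence $R_n = R$. The proposition then yields $\proj_R(C'_n) \to \proj_R(C'_\infty)$, proving continuity. By exchanging the roles of $R$ and $R'$, the map $\proj_{R'}|_{\Cham R}$ is likewise continuous.

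Third, to conclude I would either observe that we already have a continuous bijection whose inverse has just been shown to be continuous, or, more cleanly, note that $\Cham R'$ is a closed subset of $(\Vertex\Delta)^k$ (it is cut out by the closed condition of containing the fixed simplex $R'$), hence a closed subset of the compact space $\Cham\Delta$, hence compact, while $\Cham R$ is Hausdorff because $\Cham\Delta$ is metrizable; a continuous bijection between such spaces is automatically a homeomorphism.

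The only subtle point is the bookkeeping in the second step: one must carefully translate the opposition of the two residues $R$ and $R'$ in the corollary into the condition of Proposition~\ref{proposition:projections} that the face $R$ (viewed as a simplex) lies on some chamber opposite the chamber being projected. Once this translation is made, there is essentially nothing to compute; the corollary is a structural consequence of Proposition~\ref{proposition:projections} together with the classical combinatorial bijectivity of projections between opposite residues.
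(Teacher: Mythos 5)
Your proposal is correct and follows essentially the same route as the paper, whose proof simply invokes Proposition~\ref{proposition:projections}: continuity of $\proj_R|_{\Cham R'}$ and of its inverse $\proj_{R'}|_{\Cham R}$ comes from that proposition applied with constant sequences, while bijectivity (with the two projections mutually inverse) is the classical combinatorial fact about opposite residues in spherical buildings. The compactness/Hausdorff alternative you mention is a fine but unnecessary extra.
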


\begin{proof}
This follows from Proposition~\ref{proposition:projections}.
\end{proof}

In a polygon (i.e. a spherical building of rank~$2$), we will say that a gallery $(C_1, \ldots, C_n)$ \textbf{stammers} if $C_i = C_{i-1}$ for some $i \in \{2, \ldots, n\}$. The two following results are true in any usual (non-topological) polygon. Recall that an $m$-gon is a polygon of diameter $m$.

\begin{lemma}\label{lemma:minimal}
Let $\Delta$ be an $m$-gon. A gallery $(C_1, \ldots, C_n)$ in $\Delta$ with $n \leq m+1$ is minimal if and only if it does not stammer. In particular, there is no non-stammering closed gallery in $\Delta$ having less than $2m$ chambers.
\end{lemma}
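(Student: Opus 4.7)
My plan is to reduce the lemma to the Coxeter-theoretic description of the rank-$2$ building. Let $W = D_m$ be the dihedral Coxeter group of order $2m$, generated by two involutions $s_0, s_1$ corresponding to the two panel types of $\Delta$. I would use the standard Weyl distance $\delta : \Cham\Delta \times \Cham\Delta \to W$, which satisfies $d(C,C') = \ell(\delta(C,C'))$, together with the fact that a gallery $(C_1,\dots,C_n)$ whose successive shared panels $C_j \cap C_{j+1}$ have types $s_{i_j}$ realises $\delta(C_1,C_n) = s_{i_1} s_{i_2} \cdots s_{i_{n-1}}$ in $W$. The gallery is then minimal iff this word is reduced, and reduced words in $D_m$ are precisely the alternating words of length at most $m$.

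With this framework in place, the biconditional is short. For the direction that minimality forces non-stammering, I would observe that if $C_i = C_{i-1}$, then $(C_1,\dots,C_{i-1},C_{i+1},\dots,C_n)$ is a strictly shorter gallery between the same endpoints (as $C_{i-1}=C_i$ remains adjacent to $C_{i+1}$), contradicting minimality; no hypothesis on $n$ is needed here. For the reverse direction, I would take a non-stammering gallery with $n \leq m+1$, read off its type sequence of length $n-1 \leq m$, and argue that this sequence must alternate: any two consecutive equal types would force three consecutive chambers into a common panel, letting one excise the middle chamber and thereby contradicting non-stammering in its natural rank-$2$ reading. The alternating word of length $\leq m$ is then reduced in $D_m$, so $\ell(\delta(C_1,C_n)) = n-1$ and the gallery is minimal.

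For the "in particular" assertion, I would argue by contradiction. Let $(C_1,\dots,C_n)$ be a non-stammering closed gallery with $2 \leq n < 2m$. If $n \leq m+1$, the first part already gives minimality, so $n-1 = d(C_1,C_n) = 0$ (as $C_1 = C_n$), forcing $n=1$ and giving a contradiction. Otherwise $m+2 \leq n \leq 2m-1$; I would split the gallery at $C_{m+1}$. The initial piece $(C_1,\dots,C_{m+1})$ is non-stammering of $m+1$ chambers, hence minimal of length $m$ by the first part, so $C_1$ and $C_{m+1}$ are opposite. The terminal piece $(C_{m+1},\dots,C_n = C_1)$ has $n-m \leq m-1$ chambers, is still non-stammering, and thus is again minimal, yielding $d(C_{m+1}, C_1) = n-m-1 < m$. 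This contradicts the oppositeness of $C_1$ and $C_{m+1}$.

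The main technical subtlety I anticipate is reconciling the literal definition of stammering ($C_i = C_{i-1}$) with the stronger alternation of shared-panel types that the proof exploits. This is best handled either by an explicit convention at the start (a "gallery" in rank $2$ has alternating type sequence by definition) or by observing that a non-alternating type sequence is itself a degenerate form of stammering: three consecutive chambers in a common panel admit a shorter replacement, so the gallery fails to be minimal for a reason already of the "stammering" kind. Once this is settled, all remaining steps are immediate consequences of the reduced-word criterion in $D_m$.
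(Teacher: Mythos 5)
Your proof is correct in substance, but it is worth noting that the paper gives no argument of its own here: it simply cites Lemma~0.4 of Burns--Spatzier, whose setting is the incidence graph of the polygon, where the content amounts to the bipartite graph having diameter $m$ and girth $2m$, so that non-backtracking edge-paths of length at most $m$ are geodesics and closed non-backtracking edge-paths have length at least $2m$. Your route through the Weyl distance for $W=D_m$ and the reduced-word criterion reaches the same conclusions with the general building formalism; it is heavier machinery than the elementary graph-theoretic reading, but it is legitimate and not circular, and your treatment of the closed-gallery statement (split at $C_{m+1}$, apply the first part to both pieces, contradict $d(C_1,C_{m+1})=m$) is exactly right. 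One slip in your setup should be repaired: a gallery of type $(i_1,\dots,i_{n-1})$ does \emph{not} in general realise $\delta(C_1,C_n)=s_{i_1}\cdots s_{i_{n-1}}$; this identity holds when the word is reduced (in general $\delta(C_1,C_n)$ is only obtained from the word by deleting letters). Since you invoke it only after establishing that the type is alternating of length at most $m$, hence reduced, your argument is unaffected, but the ``fact'' should be stated in that corrected form.

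The subtlety you flag at the end is genuine, and you must commit to your first resolution; the second one is not a proof. Under the literal definitions in the paper (a gallery is a sequence of chambers in which consecutive chambers share a panel, and stammering means $C_i=C_{i-1}$), the ``non-stammering implies minimal'' direction is simply false: in a thick polygon, three distinct chambers $A,B,C$ through a common vertex form a non-stammering gallery of three chambers whose endpoints are adjacent, so it is not minimal. Declaring that such a configuration fails to be minimal ``for a reason of the stammering kind'' neither makes it stammer nor proves anything --- minimality is precisely what is at stake. The statement is true only under the convention that a gallery in a polygon is an edge-path in the incidence graph, equivalently that the types of the crossed panels alternate; and this is indeed the convention the paper works with: galleries have initial and final vertices, the corollary following the lemma defines $[x,y]$ as the unique minimal gallery between two vertices, and in the proof of Proposition~\ref{proposition:centerok} stammering of a gallery given by its vertex sequence is read as $\tilde e_k=\tilde e_{k+2}$, i.e.\ as backtracking. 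With that convention fixed at the outset, alternation of the type sequence is automatic (by bipartiteness), non-stammering is exactly non-backtracking, and the rest of your argument goes through as written.
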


\begin{proof}
This is a direct consequence of~\cite{Burns}*{Lemma~0.4}.
\end{proof}

\begin{corollary}
Let $\Delta$ be a polygon and $x, y$ be distinct non-opposite vertices of~$\Delta$. There is a unique minimal gallery with initial vertex $x$ and final vertex $y$, which we denote by $[x,y]$.
\end{corollary}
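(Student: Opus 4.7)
The plan is to derive uniqueness from Lemma~\ref{lemma:minimal}; existence is immediate since the incidence graph of $\Delta$ is connected, so any shortest gallery from $x$ to $y$ is minimal. Because $x$ and $y$ are non-opposite, the length $n$ of a minimal gallery from $x$ to $y$ satisfies $n \leq m - 1$, so Lemma~\ref{lemma:minimal} will be applicable to any gallery constructed from such minimal galleries.

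I would encode a gallery $(C_1, \dots, C_n)$ with initial vertex $x$ and final vertex $y$ by its vertex sequence $x = v_0, v_1, \dots, v_n = y$, where $C_k = \{v_{k-1}, v_k\}$. Suppose for contradiction that there are two distinct minimal galleries from $x$ to $y$, with vertex sequences $(v_k)$ and $(w_k)$. Let $i \geq 1$ be the smallest index with $v_i \neq w_i$, and let $j$ be the smallest index $> i$ with $v_j = w_j$; such a $j$ exists since $v_n = w_n = y$, and satisfies $j \leq n$.

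The next step is to concatenate the portion $v_{i-1}, v_i, \dots, v_j$ of the first gallery with the reversed portion $w_j, w_{j-1}, \dots, w_{i-1}$ of the second (using $v_j = w_j$ and $v_{i-1} = w_{i-1}$) to produce a closed gallery with $2(j - i + 1)$ chambers. Non-stammering within each of the two pieces follows from the fact that the original galleries are minimal, hence non-stammering by Lemma~\ref{lemma:minimal}. At the join near $v_j$, non-stammering amounts to $v_{j-1} \neq w_{j-1}$, which is guaranteed by the minimality of $j$; at the join near $v_{i-1}$, it amounts to $v_i \neq w_i$, which holds by the definition of $i$.

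Finally, since $i \geq 1$ and $j \leq n \leq m - 1$, the total length satisfies $2(j - i + 1) \leq 2n \leq 2m - 2 < 2m$, contradicting the second assertion of Lemma~\ref{lemma:minimal}. The only point requiring care is ensuring non-stammering at the two joins of the constructed closed gallery, and this is exactly what the extremal choices of $i$ and $j$ deliver.
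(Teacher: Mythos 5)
Your argument is correct and is essentially the paper's proof: the paper deduces uniqueness from Lemma~\ref{lemma:minimal} in exactly this way, and you have merely spelled out the contradiction (splicing the two minimal galleries between their first divergence and first reconvergence into a non-stammering closed gallery with fewer than $2m$ chambers). The extra care you take at the two joins is precisely what makes the one-line argument of the paper rigorous, so nothing further is needed.
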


\begin{proof}
The existence of two minimal galleries from $x$ to $y$ would contradict Lemma~\ref{lemma:minimal}.
\end{proof}

When $\Delta$ is a polygon, we denote by $\D : \Vertex\Delta \times \Vertex\Delta \to \N$ the graph distance in the $1$-dimensional simplicial complex $\Delta$.

\begin{lemma}\label{lemma:gallery}
Let $\Delta$ be a compact polygon and $x, y$ be distinct non-opposite vertices of $\Delta$. If $x_n \to x$, $y_n \to y$ and $\D(x_n, y_n) = \D(x, y)$ for all $n \in \N$, then $[x_n, y_n] \to [x, y]$.
\end{lemma}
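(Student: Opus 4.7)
The strategy is to combine sequential compactness of $\Vertex\Delta$ with the uniqueness of minimal galleries between non-opposite vertices. Let $d = \D(x,y)$; by hypothesis $\D(x_n, y_n) = d$ for every $n$. Write $[x_n, y_n] = (C_1^{(n)}, \ldots, C_d^{(n)})$ with $C_i^{(n)} = \{v_{i-1}^{(n)}, v_i^{(n)}\}$, where $v_0^{(n)} = x_n$ and $v_d^{(n)} = y_n$; similarly let $[x,y] = (C_1, \ldots, C_d)$ with vertices $v_0, \ldots, v_d$. Since $\Vertex\Delta$ is compact and metrizable, convergence $[x_n, y_n] \to [x,y]$ will follow as soon as every subsequence along which all the $v_i^{(n)}$ converge has $v_i^{(n)} \to v_i$ for each $i$.

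So fix such a subsequence and let $v_i^*$ denote the respective limits; by hypothesis, $v_0^* = x$ and $v_d^* = y$. For each $i \in \{1, \ldots, d\}$, the pair $(v_{i-1}^{(n)}, v_i^{(n)})$ (ordered by type) lies in $\Cham\Delta$, which is closed in $\Vertex\Delta^2$ by the definition of a compact spherical building. Passing to the limit, $(v_{i-1}^*, v_i^*) \in \Cham\Delta$, so $C_i^* := \{v_{i-1}^*, v_i^*\}$ is a genuine chamber; consecutive chambers $C_i^*$ and $C_{i+1}^*$ share the vertex $v_i^*$, and thus $(C_1^*, \ldots, C_d^*)$ is a gallery of length $d$ from $x$ to $y$.

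Now I invoke the polygon combinatorics. If $(C_1^*, \ldots, C_d^*)$ stammered, deleting a repeated chamber would yield a gallery from $x$ to $y$ of length at most $d-1$, contradicting $\D(x,y) = d$. Hence the limit gallery does not stammer. Since $x$ and $y$ are distinct and non-opposite we have $1 \leq d \leq m - 1 < m + 1$, so Lemma~\ref{lemma:minimal} applies and declares $(C_1^*, \ldots, C_d^*)$ minimal. The uniqueness of minimal galleries between distinct non-opposite vertices (the corollary immediately following Lemma~\ref{lemma:minimal}) then forces this gallery to coincide with $[x,y]$, i.e.\ $v_i^* = v_i$ for every $i$. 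Since every convergent subsequence of $([x_n, y_n])_n$ has limit $[x,y]$, the full sequence converges to $[x,y]$.

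The only genuine input is the equality $\D(x_n, y_n) = \D(x,y)$: this is precisely what prevents the distance from dropping in the limit and thereby rules out stammering of the limit gallery. Everything else is pulled directly from the preparatory results---closedness of $\Cham\Delta$ in $\Vertex\Delta^2$, sequential compactness of the metrizable space $\Vertex\Delta$, and Lemma~\ref{lemma:minimal} together with the uniqueness of minimal galleries.
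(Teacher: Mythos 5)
Your proof is correct. Note that the paper itself does not prove this lemma at all: it simply cites Burns--Spatzier (\cite{Burns}*{Lemma~1.12}), so your argument is a genuine self-contained replacement. The route you take -- extract convergent subsequences of the vertex tuples in the compact space $(\Vertex\Delta)^{d+1}$, use closedness of $\Cham\Delta$ in $(\Vertex\Delta)^2$ to see that the limit tuple is again a gallery of $d$ chambers from $x$ to $y$, rule out stammering via $\D(x,y)=d$, and then invoke Lemma~\ref{lemma:minimal} together with the uniqueness of minimal galleries to identify the limit with $[x,y]$ -- is exactly the natural argument and uses only the preparatory material already available in the paper. Two small points you may wish to smooth out: when you delete a repeated chamber you should note explicitly that any gallery of $k$ chambers with initial vertex $x$ and final vertex $y$ yields $\D(x,y)\leq k$ (the walk that moves through at most one edge per chamber), which is what turns the shortened gallery into a contradiction with $\D(x,y)=d$; and when passing to the limit of the ordered pairs $(v_{i-1}^{(n)},v_i^{(n)})$ you implicitly use that the type of each $v_i^{(n)}$ can be taken constant along the subsequence (pass to a further subsequence, or use that the set of unordered chamber pairs is closed as the union of $\Cham\Delta$ and its swap). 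Neither point is a gap, just a matter of making the argument airtight.
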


\begin{proof}
See~\cite{Burns}*{Lemma~1.12}.
\end{proof}

We say that two vertices $v$ and $v'$ in an $m$-gon are \textbf{almost opposite} if $\D(v,v') = m-1$ (note that $\D(v,v') = m$ if and only if $v$ and $v'$ are opposite). We then have a result similar to Proposition~\ref{proposition:opposite} for almost opposite vertices.

\begin{proposition}\label{proposition:almost-opposite}
Let $\Delta$ be a compact polygon. The set
$$\{(v,v') \in (\Vertex\Delta)^2 \mid v \text{ is almost opposite } v'\}$$
is open in $(\Vertex\Delta)^2$.
\end{proposition}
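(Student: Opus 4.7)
The plan is to argue by contradiction: assuming $(v,v')$ is almost opposite but there exist $(v_n, v'_n) \to (v,v')$ with $\D(v_n,v'_n) \neq m-1$, I will extract a short gallery from $v$ to $v'$ forcing $\D(v,v') \leq m-3$, which contradicts $\D(v,v') = m-1$.

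The first step is a parity constraint via vertex types. Each type set $T_i \subseteq \Vertex\Delta$ is the $i$-th projection of the compact set $\Cham\Delta \subseteq (\Vertex\Delta)^2$, hence closed; since $\Vertex\Delta = T_1 \sqcup T_2$, both are clopen. Therefore, for large $n$, $v_n$ has the same type as $v$ and $v'_n$ the same type as $v'$, so $\D(v_n, v'_n) \equiv m-1 \pmod{2}$. This already excludes $\D(v_n, v'_n) = m$. Combined with the hypothesis $\D(v_n, v'_n) \neq m-1$, pigeonhole on the finitely many remaining admissible distances lets me pass to a subsequence with $\D(v_n, v'_n) = d$ for some fixed integer $d \leq m-3$.

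For each $n$, let $(C_1^{(n)}, \ldots, C_d^{(n)})$ be the unique minimal gallery $[v_n, v'_n]$ given by the corollary to Lemma~\ref{lemma:minimal}. Compactness of $(\Cham\Delta)^d$ yields a further subsequence along which $C_i^{(n)} \to C_i$ for each $i$, with $v \in C_1$ and $v' \in C_d$. The key point is that the limit $(C_1, \ldots, C_d)$ is still a gallery: the shared vertex $C_i^{(n)} \cap C_{i+1}^{(n)}$ is of one of two types, so a final pigeonhole passage produces a subsequence along which this type is independent of $n$; the corresponding coordinate equality in $(\Vertex\Delta)^2$ is a closed condition and thus passes to the limit. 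Reading off the successive vertices of this limit gallery yields a walk in $\Delta$ of length at most $d$ from $v$ to $v'$, so $\D(v,v') \leq d \leq m-3$, the desired contradiction.

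The main obstacle I foresee is precisely this last verification that the limit of minimal galleries remains a gallery, i.e.\ that consecutive chambers continue to share a vertex after passing to the limit. Lemma~\ref{lemma:gallery} cannot be invoked directly, since it requires the distances $\D(v_n,v'_n)$ to be constantly equal to $\D(v,v')$ — the very property we are trying to establish. Instead, the pigeonhole argument on the type of the shared vertex, combined with the clopen partition $\Vertex\Delta = T_1 \sqcup T_2$ and the closedness of coordinate equalities, circumvents this difficulty without ever assuming thickness of $\Delta$.
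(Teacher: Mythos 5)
Your proof is correct, and it is genuinely different from what the paper does: the paper offers no argument of its own for this proposition, deferring entirely to Burns--Spatzier (\cite{Burns}*{Lemma~1.13}), whereas you give a self-contained elementary proof. The key steps all check out: the two type classes are the coordinate projections of the compact set $\Cham\Delta$, hence clopen, so for large $n$ the types of $v_n,v'_n$ agree with those of $v,v'$ and bipartite parity excludes $\D(v_n,v'_n)=m$, forcing $\D(v_n,v'_n)=d\le m-3$ along a subsequence; compactness of $\Cham\Delta$ lets you pass to a limit gallery, consecutive chambers of which still share a vertex, and reading off vertices gives $\D(v,v')\le d$, contradicting $\D(v,v')=m-1$. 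You also correctly identify why Lemma~\ref{lemma:gallery} cannot be invoked. Two simplifications are available: uniqueness of the minimal gallery is not needed (any gallery realizing the distance suffices), and your pigeonhole on the type of the shared panel can be replaced by the observation that the relation ``two chambers share a vertex'' is closed in $(\Cham\Delta)^2$, being the union of the two coordinate-equality conditions --- this is exactly the closedness of the adjacency relation already used in the proof of Lemma~\ref{lemma:equicontinuous}. One degenerate case deserves a word: when $m=3$ the subsequence may have $d=0$, where there is no gallery to speak of; there $v_n=v'_n$ forces $v=v'$, again contradicting $\D(v,v')=m-1\ge 2$. What your route buys is independence from the external reference and from thickness, at the cost of relying on the paper's standing convention that the topology on $\Vertex\Delta$ is metrizable (so that sequential compactness, uniqueness of limits and the closedness of the diagonal are available); the paper's route is of course shorter, but gives no insight within the text itself.
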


\begin{proof}
See~\cite{Burns}*{Lemma~1.13}.
\end{proof}

The following result will finally be helpful in the proofs of Theorems~\ref{theorem:Characterization} and~\ref{theorem:Criterion}.

\begin{lemma}\label{lemma:non-isolated}
Let $\Delta$ be an infinite thick compact $m$-gon with $m \geq 3$. For each $v \in \Vertex \Delta$, no chamber of $\Cham(v)$ is isolated in $\Cham(v)$. In particular, $\Cham(v)$ is infinite.
\end{lemma}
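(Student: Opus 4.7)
I proceed by contradiction. Suppose some $C_0 = \{v, w\} \in \Cham(v)$ is isolated, and pick $\varepsilon > 0$ such that $\rho(C_0, C) \geq \varepsilon$ for every $C \in \Cham(v) \setminus \{C_0\}$. The plan is to transport this isolatedness through the projection homeomorphisms of Corollary~\ref{corollary:homeo} between opposite panels, and then to use the infiniteness assumption to construct a contradicting sequence.

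Fix an apartment $A$ through $C_0$ with cyclic vertex labelling $v = v_0, w = v_1, v_2, \ldots, v_{2m-1}$, and set $v^* := v_m$, $w^* := v_{m+1}$, so that the chamber opposite $C_0$ in $A$ is $\{v^*, w^*\}$. By Corollary~\ref{corollary:homeo}, $\proj_v|_{\Cham(v^*)} : \Cham(v^*) \to \Cham(v)$ is a homeomorphism, and a direct computation in $A$ shows that its preimage of $C_0$ is $D_0 := \{v_{m-1}, v^*\}$; hence $D_0$ is isolated in $\Cham(v^*)$. The analogous statement at $w$ yields an isolated chamber $D_0^w \in \Cham(w^*)$ whenever $C_0$ is also isolated in $\Cham(w)$.

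Since $\Cham\Delta$ is compact and infinite, it has accumulation points. In the main case one produces a sequence of pairwise distinct chambers $C_n = \{v_n, w_n\} \to C_0$, and the isolatedness in $\Cham(v)$ forces $v_n \neq v$ for large $n$. The degenerate sub-case where $C_0$ is itself isolated in $\Cham\Delta$ is handled separately: propagating isolatedness through the projection homeomorphisms of Corollary~\ref{corollary:homeo} along chains of successive opposite vertices, and using thickness to spread the propagation sufficiently, compactness of $\Cham\Delta$ then forces it to be finite, contradicting infiniteness.

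In the main case, openness of the opposite relation (Proposition~\ref{proposition:opposite}) gives $v_n$ opposite $v^*$ for $n$ large, so by Proposition~\ref{proposition:projections} the chambers $\proj_{v^*}(C_n)$ converge to $D_0$; isolatedness of $D_0$ in $\Cham(v^*)$ then forces $\proj_{v^*}(C_n) = D_0$ for large $n$. This identity produces a minimal gallery from $C_n$ to $D_0$ of length $m-1$, whose underlying walk in the $1$-skeleton ends at $v_{m-1}$; using Lemma~\ref{lemma:gallery} applied to $v_n \to v$ and $v_{m-1}$ (almost opposite, with $\D(v_n, v_{m-1}) = m-1$ by Proposition~\ref{proposition:almost-opposite}), together with the uniqueness of minimal galleries (Lemma~\ref{lemma:minimal}), this walk converges vertex-by-vertex to $(v, w, v_2, \ldots, v_{m-1})$. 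Running the symmetric argument through $w^*$ and matching the two walks around $C_n$ inside $A$ then pins down the combinatorial position of $C_n$ and forces $v_n = v$ for $n$ large, a contradiction. The heart of the argument --- and its main obstacle --- is this last matching step, since each projection identity alone only controls one end of a minimal gallery, and the two projections must be combined with the gallery-rigidity of Lemma~\ref{lemma:gallery} to rigidify the full chamber $C_n$ inside $A$; a further subtlety is that the $w^*$-leg of the argument requires first upgrading the isolatedness hypothesis from $\Cham(v)$ to $\Cham(w)$ via a preliminary opposite-vertex propagation, and the degenerate sub-case likewise demands care to ensure that propagation reaches enough of $\Vertex\Delta$ to contradict infiniteness by compactness.
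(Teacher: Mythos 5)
The paper offers no argument here at all: its ``proof'' is the citation \cite{Burns}*{Lemma~1.14}, and it explicitly warns that this is the one preliminary whose proof is ``more technical''. Your sketch does not yet replace that citation, because the two places you yourself flag as the ``main obstacle'' and the ``subtlety'' are exactly where the content lies, and neither goes through as described. First, the matching step fails as stated: the two identities $\proj_{v^*}(C_n)=D_0$ and $\proj_{w^*}(C_n)=D_0^w$ do not pin down $C_n$. Already for $m=3$, with $v_0$ a point and $v_1$ a line, the first identity only says that the line of the flag $C_n$ passes through $v_2$, and the second only that its point lies on $v_5$; every flag consisting of a point $q$ on $v_5$ (with $q\neq v_0,v_4$) together with the line through $q$ and $v_2$ satisfies both, so these two constraints can never force $v_n=v$. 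One can strengthen the argument by using \emph{all} panels $u$ opposite $v$ (each $\proj_u(C_0)$ is isolated in $\Cham(u)$ by Corollary~\ref{corollary:homeo}), and in the plane case this does yield that eventually the line of $C_n$ equals $v_1$; but that only shows the accumulating chambers lie in $\Cham(w)$, i.e.\ it reduces the problem to proving that $C_0$ is isolated in $\Cham(w)$ as well --- precisely the cross-type transfer you postponed.

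Second, that transfer cannot be obtained ``via a preliminary opposite-vertex propagation'' when $m$ is even: opposite vertices then have the same type, so perspectivities and projectivities only relate panels of one type and never connect $\Cham(v)$ to $\Cham(w)$ for adjacent $v,w$. (For $m$ odd your propagation does work, combined with $2$-transitivity of the projectivity group to move the isolated chamber onto $C_0$ inside $\Cham(w)$; the even case, e.g.\ quadrangles, is where the difficulty sits.) The same parity problem undermines the ``degenerate sub-case'': from one isolated chamber you can conclude, via projectivities, that all panels of one type are discrete and hence finite by compactness, but excluding the configuration ``panels of one type finite, of the other type infinite'' is again a cross-type statement and does not follow from compactness of $\Cham\Delta$ in any obvious way. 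This cross-type step is the technical heart of the cited Lemma~1.14 of Burns and Spatzier; your proposal defers it rather than supplying it, so as it stands there is a genuine gap.
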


\begin{proof}
See~\cite{Burns}*{Lemma~1.14}.
\end{proof}

\subsection{Strong transitivity and the Moufang property}\label{subsection:transitivity}

Let $\Delta$ be a spherical building and $\Aut(\Delta)$ be the automorphism group of $\Delta$. A subgroup $G$ of $\Aut(\Delta)$ is said to be \textbf{strongly transitive} if $G$ is transitive on the set of all pairs $(A,C)$ where $A$ is an apartment of $\Delta$ and $C$ is a chamber of $A$. If $\Delta$ is a usual (non-topological) building, we say that $\Delta$ is \textbf{strongly transitive} when $\Aut(\Delta)$ is strongly transitive. If $\Delta$ is a compact building, this terminology is rather used when $\Auttop(\Delta)$ is strongly transitive.

The following fact is almost immediate but worth mentioning.

\begin{lemma}\label{lemma:strongtransitivity}
Let $\Delta$ be a spherical building and $\pi, \pi'$ be opposite panels of $\Delta$. Let also $G$ be a strongly transitive subgroup of $\Aut(\Delta)$. Then the stabilizer $\Stab_G(\pi, \pi')$ of $\pi$ and $\pi'$ in $G$ acts $2$-transitively on $\Cham(\pi)$. 
\end{lemma}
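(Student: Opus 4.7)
\noindent\textit{Proof plan.} Let $(C_1, C_2), (D_1, D_2)$ be two pairs of distinct chambers in $\Cham(\pi)$. My goal is to produce $g \in \Stab_G(\pi, \pi')$ mapping the first pair to the second, which I would do by applying strong transitivity to a well-chosen pair of apartments.

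The building-theoretic input that I would invoke is the following: any apartment containing the two opposite panels $\pi$ and $\pi'$ supports exactly two chambers on $\pi$, and conversely, given any two distinct chambers $C, C' \in \Cham(\pi)$ there exists an apartment containing $\pi, \pi', C, C'$. Granting this, let $A$ be the apartment through $\pi, \pi', C_1, C_2$ and $B$ the apartment through $\pi, \pi', D_1, D_2$. Strong transitivity applied to the pairs $(A, C_1)$ and $(B, D_1)$ produces $g \in G$ with $g(A) = B$ and $g(C_1) = D_1$. By type-preservation, $g(\pi)$ must be the unique panel of $D_1$ having the same type as $\pi$, namely $\pi$ itself; similarly $g(\pi') = \pi'$, as the unique panel of $B$ opposite $g(\pi) = \pi$. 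Hence $g \in \Stab_G(\pi, \pi')$. Finally, $g(C_2)$ is a chamber of $B$ lying on $\pi$ and distinct from $g(C_1) = D_1$; since the only two such chambers are $D_1$ and $D_2$, we conclude $g(C_2) = D_2$.

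The main obstacle is to justify the existence of the apartment through $\pi, \pi', C_1, C_2$. In the polygon setting to which this paper is devoted, this can be verified directly from the preceding results: the vertices $v_1, v_2$ of $C_1, C_2$ distinct from $\pi$ lie at graph-distance $m-1$ from $\pi'$ (being neighbors of $\pi$ in a graph of diameter $m$), and are joined to $\pi'$ by unique non-stammering galleries of length $m-1$ by the corollary following Lemma~\ref{lemma:minimal}. These two galleries cannot share an intermediate vertex, for otherwise a prefix of one followed by the reverse of a prefix of the other, concatenated with the step $v_2 \to \pi \to v_1$, would produce a non-stammering closed gallery of length strictly less than $2m$, contradicting Lemma~\ref{lemma:minimal}. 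Splicing these two galleries together with $C_1$ and $C_2$ then produces the desired $2m$-cycle, i.e.\ an apartment containing $\pi, \pi', C_1, C_2$.
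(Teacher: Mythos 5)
Your proposal is correct and takes essentially the same route as the paper: both proofs choose, for each pair of distinct chambers of $\Cham(\pi)$, the apartment containing that pair together with $\pi'$, apply strong transitivity to the resulting (apartment, chamber) pairs, and then observe that the element obtained automatically stabilizes $\pi$ and $\pi'$ and matches the second chambers; the paper simply takes the existence of these apartments for granted, so your extra gallery argument in the polygon case is a welcome addition. The only small point in that argument is at the shared vertex: you should take $w$ to be the \emph{first} shared intermediate vertex (its two predecessors then differ, since a common predecessor would be an earlier shared vertex, and neither $v_1$ nor $v_2$ can lie on the other gallery because $\D(v_i,\pi')=m-1$), which guarantees the spliced closed gallery of length less than $2m$ really is non-stammering before you invoke Lemma~\ref{lemma:minimal}.
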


\begin{proof}
Let $C_1 \neq D_1$ and $C_2 \neq D_2$ be chambers in $\Cham(\pi)$. We want to find $g \in \Stab_G(\pi, \pi')$ such that $g(C_1) = C_2$ and $g(D_1) = D_2$. Let $A_1$ (resp. $A_2$) be the apartment of $\Delta$ containing $C_1$ (resp. $C_2$), $D_1$ (resp. $D_2$) and $\pi'$. By strong transitivity of $G$, there is $g \in G$ such that $g(A_1) = A_2$ and $g(C_1) = C_2$. Clearly, we also have $g(\pi) = \pi$, $g(\pi') = \pi'$ and $g(D_1) = D_2$.
\end{proof}

The Moufang property is another transitivity hypothesis on the automorphism group of a building. We say that a thick irreducible spherical building $\Delta$ is \textbf{Moufang} (or satisfies the \textbf{Moufang property}) if for each root $\alpha$ of $\Delta$, the \textbf{root group}
$$U_\alpha := \{g \in \Aut(\Delta) \mid g \text{ fixes every chamber having a panel in } \alpha \setminus \partial\alpha\}$$
acts transitively on the set of apartments of $\Delta$ containing $\alpha$. A natural wish would be to replace $\Aut(\Delta)$ with $\Auttop(\Delta)$ in the case where $\Delta$ is a compact building, but this is actually not necessary. Indeed, one can show that every element of a root group is automatically continuous, hence contained in $\Auttop(\Delta)$ (see~\cite{Grundhofer}*{Corollary~6.17}).

\section{A compactness criterion for subsets of \texorpdfstring{$\boldsymbol{\Auttop(\Delta)}$}{Auttop(Delta)}}\label{section:criterion}

This section is dedicated to the proof of Theorem~\ref{theorem:Criterion} which gives, in a compact polygon $\Delta$, a sufficient condition on subsets of $\Auttop(\Delta)$ for being compact.

\subsection[Relative compactness in \texorpdfstring{$\Auttop(\Delta)$}{Auttop(Delta)}]{Relative compactness in \texorpdfstring{$\boldsymbol{\Auttop(\Delta)}$}{Auttop(Delta)}}

To prove Theorem~\ref{theorem:Criterion}, we need some characterization of relative compactness in $\Auttop(\Delta)$. The tool that will help us is the Arzela-Ascoli theorem. For the reader's convenience, we recall its statement below. First recall that when $X$ is a topological space and $Y$ is a metric space, a subset $\mathcal{F}$ of $C(X,Y)$ is said to be \textbf{equicontinuous at $x_0 \in X$} if, for all $\varepsilon > 0$, there exists a neighbourhood $U$ of $x_0$ such that
$$f(U) \subseteq B(f(x_0), \varepsilon) \quad\text{for all } f \in \mathcal{F},$$
where $B(x, r)$ denotes the open ball of radius $r$ centered at $x$. The set $\mathcal{F}$ is then called \textbf{equicontinuous on $X$} if it is equicontinuous at each point of $X$.

\begin{theorem*}[Arzela-Ascoli]
Let $X$ be a topological space and $Y$ be a metric space. Let $\mathcal{F}$ be a subset of $C(X,Y)$. If $\mathcal{F}$ is equicontinuous on $X$ and if $\{f(x) \mid f \in \mathcal{F}\}$ is relatively compact in $Y$ for all $x \in X$, then $\mathcal{F}$ is relatively compact in $C(X,Y)$ equipped with the compact-open topology. The converse holds if $X$ is locally compact.
\end{theorem*}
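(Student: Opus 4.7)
The plan is to deduce compactness in the compact-open topology from the much more tractable product topology on $Y^X$, using equicontinuity as the bridge between pointwise and uniform-on-compacts convergence. First I would consider the closure $\overline{\mathcal{F}}^{\mathrm{pt}}$ of $\mathcal{F}$ in $Y^X$ endowed with the product topology. Since $\{f(x)\mid f\in\mathcal{F}\}$ is relatively compact in $Y$ for every $x\in X$, Tychonoff's theorem makes the product $P:=\prod_{x\in X}\overline{\{f(x)\mid f\in\mathcal{F}\}}$ compact, and $\overline{\mathcal{F}}^{\mathrm{pt}}\subseteq P$ is therefore compact for pointwise convergence. Equicontinuity then passes to pointwise limits: if $d(f(x),f(x_0))<\varepsilon$ for all $f\in\mathcal{F}$ and all $x$ in some neighbourhood $U$ of $x_0$, then the non-strict inequality survives under pointwise limits, so $\overline{\mathcal{F}}^{\mathrm{pt}}$ is a still-equicontinuous family of \emph{continuous} maps, i.e.\ $\overline{\mathcal{F}}^{\mathrm{pt}}\subseteq C(X,Y)$.

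The heart of the argument is to show that on this equicontinuous set, pointwise convergence already implies uniform convergence on compact subsets, so that the compact-open topology and the product topology coincide there. Given a compact $K\subseteq X$ and $\varepsilon>0$, I would use equicontinuity of $\overline{\mathcal{F}}^{\mathrm{pt}}$ to cover $K$ by finitely many open sets $U_1,\dots,U_n$ with centres $x_1,\dots,x_n$ such that $d(g(x),g(x_i))<\varepsilon/3$ for every $g\in\overline{\mathcal{F}}^{\mathrm{pt}}$ and every $x\in U_i$. Pointwise convergence at the finite set $\{x_1,\dots,x_n\}$ gives $d(f_\alpha(x_i),f(x_i))<\varepsilon/3$ eventually, and a triangle inequality yields $d(f_\alpha(x),f(x))<\varepsilon$ uniformly on $K$. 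Consequently $\overline{\mathcal{F}}^{\mathrm{pt}}$ is compact for the compact-open topology, which contains $\overline{\mathcal{F}}$ and proves relative compactness.

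For the converse, assume $X$ is locally compact and $\mathcal{F}$ is relatively compact in $C(X,Y)$ for the compact-open topology. Pointwise relative compactness is automatic because each evaluation map $\mathrm{ev}_x:C(X,Y)\to Y$ is continuous. For equicontinuity at $x_0$, I would pick a compact neighbourhood $K$ of $x_0$; the restriction map $C(X,Y)\to C(K,Y)$ (the latter with the uniform topology) is continuous, so $\mathcal{F}|_K$ is relatively compact in $(C(K,Y),d_\infty)$, hence totally bounded. Choosing a finite $\varepsilon/3$-net $g_1,\dots,g_N$ for $\mathcal{F}|_K$ and a neighbourhood $U\subseteq K$ of $x_0$ on which each of the finitely many $g_j$ varies by less than $\varepsilon/3$, a standard $\varepsilon/3$-argument gives $d(f(x),f(x_0))<\varepsilon$ for all $f\in\mathcal{F}$ and $x\in U$.

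The only genuinely delicate step is the equivalence of the product and compact-open topologies on the equicontinuous closure; everything else is routine once Tychonoff is available. Since this theorem is entirely classical, I would expect the paper to state it without proof and simply invoke it when needed later in Section~\ref{section:criterion}.
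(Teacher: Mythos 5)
Your argument is correct: the Tychonoff/pointwise-closure construction, the observation that equicontinuity makes the product and compact-open topologies coincide on the closure, and the $\varepsilon/3$ total-boundedness argument for the converse under local compactness constitute exactly the standard proof of this classical theorem. As you anticipated, the paper offers no proof of its own and simply cites \cite{Munkres}*{Theorem~47.1}, whose proof is essentially the one you sketched, so there is no gap and nothing substantive to compare.
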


\begin{proof}
See~\cite{Munkres}*{Theorem~47.1}.
\end{proof}

We can now already give some characterization of relative compactness in $\Auttop(\Delta)$.

\begin{lemma}\label{lemma:equicontinuous}
Let $\Delta$ be a compact spherical building. A set $S \subseteq \Auttop(\Delta)$ is relatively compact in $\Auttop(\Delta)$ if and only if $S$ and $S^{-1}$ are equicontinuous on $\Cham\Delta$.
\end{lemma}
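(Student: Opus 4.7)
The proof will rely on the Arzela--Ascoli theorem just stated, together with the fact that the topology on $\Auttop(\Delta)$ is induced from the compact-open topology on $C(\Cham\Delta, \Cham\Delta)$, which coincides with the topology of uniform convergence since $\Cham\Delta$ is compact metric.

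For the forward implication, if $S$ is relatively compact in $\Auttop(\Delta)$ then $S$ is relatively compact in $C(\Cham\Delta, \Cham\Delta)$, and Arzela--Ascoli immediately yields equicontinuity of $S$. To handle $S^{-1}$, I would argue by contradiction: if equicontinuity of $S^{-1}$ failed at some $x_0 \in \Cham\Delta$, there would exist $\varepsilon > 0$, a sequence $\varphi_n \in S$ and $y_n \to x_0$ with $\rho(\varphi_n^{-1}(y_n), \varphi_n^{-1}(x_0)) \geq \varepsilon$. Relative compactness of $S$ gives a subsequence $\varphi_{n_k}$ converging uniformly to some $\varphi \in \Auttop(\Delta)$. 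The standard observation that $\varphi_{n_k}(z_k) \to x$ forces every subsequential limit of $z_k$ to be mapped to $x$ by $\varphi$, hence $z_k \to \varphi^{-1}(x)$; applied to $z_k = \varphi_{n_k}^{-1}(x_0)$ and $z_k = \varphi_{n_k}^{-1}(y_{n_k})$, this forces both sequences to converge to $\varphi^{-1}(x_0)$, contradicting the distance bound.

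For the converse, assuming $S$ and $S^{-1}$ are equicontinuous, Arzela--Ascoli makes both of them relatively compact in $C(\Cham\Delta, \Cham\Delta)$, so it suffices to show that every accumulation point of $S$ in the larger space $C(\Cham\Delta, \Cham\Delta)$ actually lies in $\Auttop(\Delta)$. Given $\varphi_n \in S$ with $\varphi_n \to \varphi$ uniformly, I would pass to a subsequence so that $\varphi_n^{-1} \to \psi$ uniformly as well. Taking limits in $\varphi_n \circ \varphi_n^{-1} = \varphi_n^{-1} \circ \varphi_n = \id$ --- justified by uniform convergence combined with continuity of the limits $\varphi$ and $\psi$ --- yields $\varphi \circ \psi = \psi \circ \varphi = \id_{\Cham\Delta}$, so $\varphi$ is a homeomorphism of $\Cham\Delta$.

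The main work, which I expect to be the principal obstacle, is to promote $\varphi$ to a topological automorphism of the whole building. Since $\Cham\Delta$ sits inside $(\Vertex\Delta)^k$ and each $\varphi_n$ acts coordinatewise, I would define a candidate map $\tilde\varphi$ on a type-$i$ vertex $v$ by choosing any chamber $C \ni v$ and setting $\tilde\varphi(v)$ to be the $i$-th vertex of $\varphi(C)$; a uniqueness-of-limit argument in the compact set $\Vertex\Delta$ applied to $\varphi_n(v)$ shows that this value is independent of the chosen $C$, and the parallel construction for $\psi$ furnishes an inverse $\tilde\psi$. Continuity of $\tilde\varphi$ on each type stratum $\Vertex\Delta^{(i)}$ then follows from the identity $\tilde\varphi \circ \pi_i = \pi_i \circ \varphi$, where $\pi_i : \Cham\Delta \to \Vertex\Delta^{(i)}$ is the coordinate projection: the map $\pi_i$ is a quotient map, being a continuous surjection between compact Hausdorff spaces, so continuity of the right-hand side forces continuity of $\tilde\varphi$. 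Preservation of the simplicial structure of $\Delta$ is automatic, since $\Cham\Delta$ is closed in $(\Vertex\Delta)^k$ and the image $(\tilde\varphi(v_1), \ldots, \tilde\varphi(v_k))$ of any chamber $C$ is the limit of the chambers $\varphi_n(C)$. Both applications of Arzela--Ascoli are routine by comparison with this final passage from a uniform chamberwise limit to a bona fide type-preserving bicontinuous simplicial automorphism.
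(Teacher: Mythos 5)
Your proof is correct and follows essentially the same route as the paper: Arzela--Ascoli in both directions, with the converse handled by extracting a uniform limit of the inverses $\varphi_n^{-1}$ to serve as $\psi$ and then checking that the chamberwise limit is a genuine topological automorphism of the building. The only cosmetic differences are that the paper gets equicontinuity of $S^{-1}$ in the forward direction by noting that inversion in $\Auttop(\Delta)$ is a homeomorphism (rather than your direct sequential argument), and that it compresses your detailed extension of $\varphi$ to vertices into the remark that the adjacency relation is closed in a compact building.
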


\begin{proof}
First assume that $S$ is relatively compact in $\Auttop(\Delta)$. The set $S$ is thus also relatively compact in $C(\Cham \Delta, \Cham \Delta)$, hence $S$ is equicontinuous on $\Cham \Delta$ by Arzela-Ascoli. Since $\Auttop(\Delta)$ is a topological group, the inverse function of $\Auttop(\Delta)$ is a homeomorphism and $S^{-1}$ is in turn relatively compact and equicontinuous on $\Cham \Delta$.

Conversely, assume that $S$ and $S^{-1}$ are equicontinuous. We want to show that $S$ is relatively compact in $\Auttop(\Delta)$. We already know by Arzela-Ascoli that $S$ is relatively compact in $C(\Cham\Delta, \Cham\Delta)$. A way to conclude is therefore to prove that the closure of $S$ in $C(\Cham\Delta, \Cham\Delta)$ is contained in $\Auttop(\Delta)$. We thus consider a sequence $\varphi_n \to \varphi$ with $\{\varphi_n\} \subseteq \Auttop(\Delta)$ and $\varphi \in C(\Cham\Delta, \Cham\Delta)$ and prove that $\varphi \in \Auttop(\Delta)$. As a direct consequence of the fact that the adjacency relation is closed in a compact building, $\varphi$ is a (continuous) building morphism. To complete the proof, it suffices to find another such morphism $\psi$ so that $\varphi \psi = \psi \varphi = \id_\Delta$. Since $S^{-1}$ is equicontinuous and thus relatively compact in $C(\Cham\Delta, \Cham\Delta)$, it follows that some subsequence of $(\varphi_n^{-1})$ converges to an element of $C(\Cham\Delta, \Cham\Delta)$. Taking this limit for $\psi$, we obtain $\varphi \psi = \psi \varphi = \id_\Delta$.
\end{proof}

To prove that a given subset of $\Auttop(\Delta)$ is relatively compact, we will generally proceed by contradiction. In this context, the next proposition will be helpful. We introduce a convenient definition to state it.

\begin{definition}
Let $\Delta$ be a compact polygon. Two convergent sequences $a_n \to a$ and $b_n \to b$ in $\Vertex\Delta$ are said to be \textbf{collapsed by} a sequence $\{\varphi_n\} \subseteq \Auttop(\Delta)$ if $a \neq b$ and if $\varphi_n a_n \to x$ and $\varphi_n b_n \to x$ for some $x \in \Vertex\Delta$.
\end{definition}

\begin{proposition}\label{proposition:closer}
Let $\Delta$ be a compact polygon and $S \subseteq \Auttop(\Delta)$. If $S$ is not relatively compact in $\Auttop(\Delta)$, then there exist two sequences $a_n \to a$ and $b_n \to b$ in $\Vertex\Delta$ which are collapsed by some sequence $(\varphi_n)$ in $S$ or in $S^{-1}$.
\end{proposition}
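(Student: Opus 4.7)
The plan is to invoke Lemma~\ref{lemma:equicontinuous} to replace the failure of relative compactness of $S$ by a failure of equicontinuity of either $S$ or $S^{-1}$, and then to extract a collapsed vertex-pair from that failure. By symmetry I may assume that $S$ itself is not equicontinuous at some chamber $C_0 \in \Cham\Delta$; unpacking the definition, there exist $\varepsilon > 0$, elements $\varphi_n \in S$ and chambers $C_n \to C_0$ with $\rho(\varphi_n C_n, \varphi_n C_0) \geq \varepsilon$ for every $n$.

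Using the compactness of $\Cham\Delta$, I pass to a subsequence along which $\varphi_n C_n \to D_1$ and $\varphi_n C_0 \to D_2$; the uniform separation forces $\rho(D_1, D_2) \geq \varepsilon$ and in particular $D_1 \neq D_2$. Viewing chambers of the polygon as ordered pairs of vertices of distinct types (and, after passing to a further subsequence, assuming all $\varphi_n$ induce the same permutation of the two types), convergence in $\Cham\Delta \subseteq \Vertex\Delta \times \Vertex\Delta$ is coordinatewise, so the inequality $D_1 \neq D_2$ means that one of the two coordinate slots yields distinct limits. Writing $a$ (resp.\ $a_n$) for the corresponding vertex of $C_0$ (resp.\ $C_n$), I obtain $a_n \to a$ in $\Vertex\Delta$, while $\varphi_n(a_n) \to a'$ and $\varphi_n(a) \to a''$ with $a' \neq a''$.

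The decisive observation is then to reverse direction: set $x_n := \varphi_n(a_n) \to a'$ and $y_n := \varphi_n(a) \to a''$. The two sequences $(x_n)$ and $(y_n)$ in $\Vertex\Delta$ have distinct limits, while
\[
\varphi_n^{-1}(x_n) = a_n \longrightarrow a \qquad \text{and} \qquad \varphi_n^{-1}(y_n) = a \longrightarrow a
\]
both converge to the same vertex, so $(x_n)$ and $(y_n)$ are collapsed by the sequence $(\varphi_n^{-1}) \subseteq S^{-1}$. The case where $S^{-1}$ is the one failing equicontinuity is symmetric and produces a collapse directly inside $S$. The only real subtlety lies in this inversion step: the upstream sequences $(a_n)$ and the constant sequence $(a,a,\dots)$ do \emph{not} by themselves form a collapsed pair, since they share the same limit; one must first produce two downstream sequences with \emph{distinct} limits and then pull them back by $\varphi_n^{-1}$, which automatically crushes them together onto $a$. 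Everything else is compactness and bookkeeping.
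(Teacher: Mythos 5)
Your proposal is correct and follows essentially the same route as the paper: apply Lemma~\ref{lemma:equicontinuous}, extract a witness to the failure of equicontinuity, and observe that pulling the two image sequences back by $\varphi_n^{-1}$ produces a collapsed pair in $S^{-1}$ (resp.\ $S$). The only cosmetic difference is that the paper passes immediately to non-equicontinuity on $\Vertex\Delta$, whereas you work at the chamber level and extract the offending vertex coordinate (correctly handling the possible type permutation); the substance is identical.
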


\begin{proof}
Let $S$ be a subset of $\Auttop(\Delta)$ which is not relatively compact. By Lemma~\ref{lemma:equicontinuous}, $S$ or $S^{-1}$ is not equicontinuous on $\Cham\Delta$ and therefore also not equicontinuous on $\Vertex\Delta$. Suppose $S$ is not equicontinuous. Then there exist some $x \in \Vertex\Delta$ and $\varepsilon > 0$ such that for every neighbourhood $U$ of $x$, there exists $\psi \in S$ satisfying $\psi(U) \not \subseteq B(\psi x, \varepsilon)$. In other words, for every $n \in \N^*$ there exist $\psi_n \in S$ and $y_n \in B\left(x, \frac{1}{n}\right)$ such that $\psi_n y_n \not \in B(\psi_n x, \varepsilon)$. Clearly $y_n \to x$ and we can assume by passing to a subsequence that $\psi_n x \to a$ and $\psi_n y_n \to b$ for some $a \neq b$. Hence, the sequences $a_n = \psi_n x \to a$ and $b_n = \psi_n y_n \to b$ are collapsed by $\{\varphi_n = \psi_n^{-1}\} \subseteq S^{-1}$. The case where $S^{-1}$ is not equicontinuous is identical but gives two sequences which are collapsed by a sequence in $(S^{-1})^{-1} = S$.
\end{proof}

\subsection{Sequences collapsed by a sequence of automorphisms}

In this section, we consider a compact polygon $\Delta$. Proposition~\ref{proposition:closer} states that one can deduce from non-relative compactness of a subset $S$ of $\Auttop(\Delta)$ that there exist two sequences $a_n \to a$ and $b_n \to b$ of vertices of $\Delta$ collapsed by a sequence in $S$ or in $S^{-1}$. The goal of this subsection is to show that we can actually obtain additional constraints on these two sequences. The following result of Burns and Spatzier already goes in this direction by asserting that we can assume $\D(a_n, b_n) = \D(a, b) = 2$.

\begin{proposition}\label{proposition:centered}
Let $\Delta$ be a thick compact polygon. If there exist two sequences $a_n \to a$ and $b_n \to b$ in $\Vertex\Delta$ collapsed by a sequence $\{\varphi_n\} \subseteq \Auttop(\Delta)$, then there exist two sequences $a'_n \to a'$ and $b'_n \to b'$ collapsed by $(\varphi_n)$ and such that $\D(a'_n, b'_n) = \D(a', b') = 2$ for all $n \in \N$.
\end{proposition}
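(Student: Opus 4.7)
The plan is to induct on $\ell := \D(a_n, b_n)$, which may be assumed constant after passing to a subsequence. Since $\varphi_n a_n \to x$ and $\varphi_n b_n \to x$ force $a_n, b_n$ to share the type of $x$ eventually (types are locally constant in $\Vertex\Delta$), the integer $\ell$ is even; as $a \neq b$, $\ell \geq 2$. The base case $\ell = 2$ is immediate: take $a'_n := a_n$, $b'_n := b_n$, and note that $\D(a, b) = 2$ follows by combining the lower semi-continuity of $\D$ (a consequence of the closedness of adjacency in a compact building) with the parity constraint.

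For the inductive step ($\ell \geq 4$), I fix a non-stammering gallery $g_n = (a_n = v_n^{(0)}, v_n^{(1)}, \ldots, v_n^{(\ell)} = b_n)$. When $\ell < m$, the unique minimal gallery $[a_n, b_n]$ serves. When $\ell = m$ (so $a_n$ is opposite $b_n$), I construct $g_n$ consistently via projections: I fix a chamber $D$ of $\Delta$ opposite some chamber of $\Cham(a)$, so that by Proposition~\ref{proposition:projections} the chambers $\{a_n, c_n\} := \proj_{\{a_n\}}(D)$ converge to $\{a, c\} := \proj_{\{a\}}(D)$, and set $g_n := (a_n, c_n) \cup [c_n, b_n]$; a short type argument gives $\D(c_n, b_n) = \D(c, b) = m-1$, so Lemma~\ref{lemma:gallery} yields $[c_n, b_n] \to [c, b]$, and the concatenation is non-stammering of length $m$. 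A diagonal extraction then allows the assumption $v_n^{(i)} \to v^{(i)}$ and $\varphi_n v_n^{(i)} \to u^{(i)}$ for every $i$, with $u^{(0)} = u^{(\ell)} = x$.

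The key structural fact is that $(u^{(0)}, \ldots, u^{(\ell)})$ is a closed walk at $x$ in the $1$-skeleton of $\Delta$, of length $\ell \leq m < 2m$; by Lemma~\ref{lemma:minimal} the $1$-skeleton has girth $2m$, so this walk must backtrack, i.e.\ $u^{(i-1)} = u^{(i+1)}$ for some $i$. I distinguish two cases. If some $0 < i_0 < \ell$ satisfies $u^{(i_0)} = x$, then because $a \neq b$ forces $v^{(i_0)} \neq a$ or $v^{(i_0)} \neq b$, one of $(a_n, v_n^{(i_0)})$ or $(v_n^{(i_0)}, b_n)$ is a collapsed pair of strictly smaller (even) gallery length, and the inductive hypothesis applies. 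Otherwise every backtrack index $i$ lies in $\{2, \ldots, \ell-2\}$ with $u^{(i-1)} \neq x$; if some such $i$ additionally satisfies $v^{(i-1)} \neq v^{(i+1)}$, the assignment $a'_n := v_n^{(i-1)}$, $b'_n := v_n^{(i+1)}$ completes the proof: these are at distance $2$ by non-stammering, have distinct limits, and are collapsed to $u^{(i-1)} = u^{(i+1)}$.

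The main obstacle is the remaining subcase where every image-backtrack index also has $v^{(i-1)} = v^{(i+1)}$, so that the limit gallery folds in $\Vertex\Delta$ wherever the image walk does. Here a direct extraction from $g_n$ yields only pairs with coinciding limits in $\Vertex\Delta$, which fails the collapse definition. I expect the resolution to combine the non-isolation of chambers from Lemma~\ref{lemma:non-isolated} (providing room for auxiliary vertex sequences near each folding vertex $v_n^{(i)}$) with a more careful analysis: at such an $i$, one seeks a third sequence $r_n$ adjacent to $v_n^{(i)}$, distinct from $v_n^{(i-1)}$ and $v_n^{(i+1)}$, whose $\varphi_n$-image still converges to $u^{(i-1)}$ while its own limit $r$ differs from $v^{(i-1)}$; any such choice furnishes the desired distance-$2$ collapsed pair. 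Showing that at least one such choice exists—or that its systematic failure would impose enough rigidity on $\varphi_n|_{\Cham(v_n^{(i)})}$ to contradict $a \neq b$ via iterated reduction of the pair of walks—is the technical core of the argument.
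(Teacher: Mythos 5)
Your reduction works up to, but not including, the case you yourself single out at the end, and that case is the real content of the proposition; as written, the argument is therefore incomplete. Concretely: when every backtrack index $i$ of the image walk (every $i$ with $u^{(i-1)}=u^{(i+1)}$) also satisfies $v^{(i-1)}=v^{(i+1)}$, and no interior $u^{(j)}$ equals $x$, none of the pairs available from your galleries qualifies: $(v_n^{(i-1)},v_n^{(i+1)})$ is at distance $2$ for every $n$ and its images collapse, but its limits coincide, so it is not a collapsed pair in the sense of the definition; $(a_n,b_n)$ has distinct limits but distance $\ell\geq 4$; and mixed pairs such as $(a_n,v_n^{(i-1)})$ have image limits $x\neq u^{(i-1)}$. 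Already for $\ell=4$ the configuration $v^{(1)}=v^{(3)}$, $u^{(1)}=u^{(3)}\neq x\neq u^{(2)}$ (so that $\D(a,b)=2$ while $\D(a_n,b_n)=4$) is compatible with everything you have established, and your induction gives no way to proceed from it.

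The repair you sketch is not a routine completion. Lemma~\ref{lemma:non-isolated} lets you pick $r_n$ adjacent to $v_n^{(i)}$ whose limit avoids $v^{(i-1)}$, but it gives no control whatsoever on $\varphi_n r_n$; choosing instead $r_n=\varphi_n^{-1}(s_n)$ with $s_n$ adjacent to $\varphi_n v_n^{(i)}$ and close to $\varphi_n v_n^{(i-1)}$ forces $\varphi_n r_n\to u^{(i-1)}$, but then nothing prevents $r_n\to v^{(i-1)}$: ruling out that $\varphi_n^{-1}$ crushes a whole neighbourhood of chambers at $v_n^{(i)}$ onto $v^{(i-1)}$ is exactly the kind of degeneration the proposition is meant to control, so it cannot simply be asserted. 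Note also that the paper itself offers no proof to compare with, since it cites the discussion between Assertions~2.3 and~2.4 of \cite{Burns}; a self-contained write-up therefore has to supply precisely this missing case rather than leave it as an expectation. (A minor separate point: in your $\ell=m$ construction the appeal to Lemma~\ref{lemma:gallery} needs $\D(c,b)=m-1$, which may fail because $\D(a,b)$ can be smaller than $m$; this is harmless only because you extract convergent subsequences afterwards anyway.)
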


\begin{proof}
See \cite{Burns}*{Discussion between Assertions 2.3 and 2.4}.
\end{proof}

Now suppose that we are given a converging sequence of apartments $A_n \to A$, i.e. $2m$ converging sequences $v_i^{(n)} \to v_i$ ($i \in \{0, \ldots, 2m-1\}$) where $v_0^{(n)}, \ldots, v_{2m-1}^{(n)}$ (resp. $v_0, \ldots, v_{2m-1}$) are the vertices of an apartment $A_n$ (resp. $A$). In this context, we can generally even assume that the middle vertex $c_n$ of $a_n$ and $b_n$ (which are such that $\D(a_n, b_n) = 2$) is a vertex of $A_n$. For this reason we introduce the following definition.

\begin{definition}
Let $\Delta$ be a compact polygon. Two convergent sequences $a_n \to a$ and $b_n \to b$ in $\Vertex\Delta$ are said to be \textbf{centered at} $c_n \to c$ if $\D(a_n, c_n) = \D(c_n, b_n) = 1$ for all $n \in \N$. This also implies that $\D(a, c) = \D(c, b) = 1$.
\end{definition}

Before proving the announced result, we show the next lemma, which provides a way to go from a sequence of vertices to a sequence of opposite vertices.

\begin{lemma}\label{lemma:opposite}
Let $\Delta$ be a compact polygon and let $a_n \to a$ and $b_n \to b$ be two sequences in $\Vertex \Delta$ centered at $c_n \to c$ and collapsed by a sequence $\{\varphi_n\} \subseteq \Auttop(\Delta)$. Let also $c'_n \to c'$ be a sequence in $\Vertex \Delta$ such that $c'_n$ (resp. $c'$) is opposite $c_n$ (resp. $c$) for all $n \in \N$. Suppose that $\varphi_n c_n \to \tilde{c}$ and $\varphi_n c'_n \to \tilde{c}'$ for some opposite vertices $\tilde{c}$ and $\tilde{c}'$. Denote by $C_n$ (resp.~$D_n$) the chamber whose vertices are $c_n$ and $a_n$ (resp. $b_n$) and by $a'_n$ (resp. $b'_n$) the vertex of $\proj_{c'_n}(C_n)$ (resp. $\proj_{c'_n}(D_n)$) different from $c'_n$. Then the sequences $(a'_n)$ and $(b'_n)$ converge, are centered at $(c'_n)$ and are collapsed by $(\varphi_n)$.
\end{lemma}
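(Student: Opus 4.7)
The plan is to apply Proposition~\ref{proposition:projections} twice: first in $\Delta$ to obtain convergence of $(a'_n)$ and $(b'_n)$, and then after pushing forward by $\varphi_n$ to derive the collapse. The key observation is that since both $\varphi_n a_n$ and $\varphi_n b_n$ converge to the same vertex $x$, the chambers $\varphi_n C_n$ and $\varphi_n D_n$ share a common limit, and continuity of the projection propagates this coincidence to $\varphi_n C'_n$ and $\varphi_n D'_n$.

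I would first establish convergence. Since $\Cham\Delta$ is closed in $(\Vertex\Delta)^2$, $C_n \to C := \{c, a\}$ and $D_n \to D := \{c, b\}$. The vertex $c'$ is opposite $c$, and an elementary argument using thickness (and $m \geq 3$) shows that all but one chamber of $\Cham(c')$ is opposite $C$, the exception being $\proj_{c'}(C)$; in particular $c'$ is a face of some chamber opposite $C$. Proposition~\ref{proposition:projections} then yields $C'_n = \proj_{c'_n}(C_n) \to \proj_{c'}(C) =: C' = \{c', a'\}$, whence $a'_n \to a'$, and symmetrically $b'_n \to b'$ with $\{c',b'\} = \proj_{c'}(D)$. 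Centering at $(c'_n)$ is immediate since $\{c'_n, a'_n\}$ and $\{c'_n, b'_n\}$ are chambers by construction. To see $a' \neq b'$, observe that $a \neq b$ forces $C \neq D$ in $\Cham(c)$, while Corollary~\ref{corollary:homeo} asserts that $\proj_{c'}|_{\Cham(c)}\colon \Cham(c) \to \Cham(c')$ is a bijection.

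For the collapse, I would use that automorphisms commute with projections:
\[
\varphi_n C'_n \;=\; \varphi_n \proj_{c'_n}(C_n) \;=\; \proj_{\varphi_n c'_n}(\varphi_n C_n),
\]
and analogously for $D'_n$. From the hypotheses $\varphi_n c_n \to \tilde c$, $\varphi_n a_n \to x$ and $\varphi_n b_n \to x$, closedness of the chamber relation forces $\varphi_n C_n \to \{\tilde c, x\}$ and $\varphi_n D_n \to \{\tilde c, x\}$ --- the \emph{same} chamber. Since $\tilde c'$ is opposite $\tilde c$, the same thickness argument as before certifies the hypothesis of Proposition~\ref{proposition:projections} in this new setting, so both $\varphi_n C'_n$ and $\varphi_n D'_n$ converge to $\proj_{\tilde c'}(\{\tilde c, x\})$. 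Writing this common limit as $\{\tilde c', \tilde x\}$ and using $\varphi_n c'_n \to \tilde c'$, extraction of the remaining vertex gives $\varphi_n a'_n \to \tilde x$ and $\varphi_n b'_n \to \tilde x$; combined with $a' \neq b'$, this exhibits $(a'_n)$ and $(b'_n)$ as collapsed by $(\varphi_n)$.

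The only mildly delicate point is verifying the ``$R$ is a face of a chamber opposite $C$'' hypothesis in each application of Proposition~\ref{proposition:projections}; both times it reduces to the same short observation about opposition in a thick generalized $m$-gon. Everything else is a direct diagram chase combining continuity of projections, the equivariance $\varphi \circ \proj_R = \proj_{\varphi R} \circ \varphi$, and the fact that chambers form a closed subset of $(\Vertex\Delta)^2$.
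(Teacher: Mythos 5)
Your proposal is correct and follows essentially the same route as the paper: Proposition~\ref{proposition:projections} gives the convergence of $(a'_n)$ and $(b'_n)$ (with $a' \neq b'$ because projection between opposite panels is injective), and the equivariance $\varphi_n \circ \proj_{c'_n} = \proj_{\varphi_n c'_n} \circ \varphi_n$ combined with a second application of Proposition~\ref{proposition:projections} shows that $\varphi_n \proj_{c'_n}(C_n)$ and $\varphi_n \proj_{c'_n}(D_n)$ share the same limit, which is exactly the paper's argument. The extra care you take with the ``face of a chamber opposite $C$'' hypothesis is a fine (and harmless) elaboration that the paper leaves implicit.
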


\begin{proof}
\begin{figure}
\centering
\includegraphics{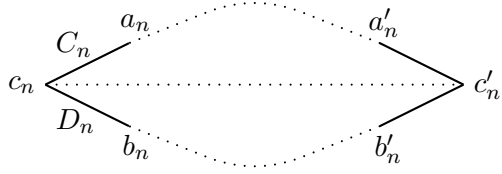}
\caption{Illustration of Lemma~\ref{lemma:opposite}.}\label{picture:opposite}
\end{figure}
By Proposition~\ref{proposition:projections}, the sequence $(a'_n)$ converges to $a'$, the vertex of $\proj_{c'}(C)$ different from $c'$ where $C$ is the chamber having vertices $c$ and $a$. The sequence $(b'_n)$ converges to $b'$ defined in the same way and $a' \neq b'$ since $a \neq b$. The fact that these two sequences are collapsed by $(\varphi_n)$ directly comes from the observation that the two sequences $\left(\varphi_n \proj_{c'_n}(C_n)\right) = \left(\proj_{\varphi_n c'_n}(\varphi_n C_n)\right)$ and $\left(\varphi_n \proj_{c'_n}(D_n)\right) = \left(\proj_{\varphi_n c'_n}(\varphi_n D_n)\right)$ have the same limit by Proposition~\ref{proposition:projections}.
\end{proof}

We draw attention to the fact that it is essential to have $\tilde{c}$ and $\tilde{c}'$ opposite each other to apply the previous lemma.

\begin{proposition}\label{proposition:centerok}
Let $\Delta$ be a thick compact polygon and $A_n \to A$ be a converging sequence of apartments of $\Delta$. If there exist two sequences $a_n \to a$ and $b_n \to b$ in $\Vertex\Delta$ collapsed by a sequence $\{\varphi_n\} \subseteq \Auttop(\Delta)$ and if $\varphi_n A_n \to \tilde{A}$ for some apartment~$\tilde{A}$ of $\Delta$, then there exist two sequences $a'_n \to a'$ and $b'_n \to b'$ centered at $c'_n \to c'$ and collapsed by some subsequence of $(\varphi_n)$, with $c'_n \in A_n$ for all $n \in \N$.
\end{proposition}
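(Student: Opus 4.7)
The plan is to combine Proposition~\ref{proposition:centered} (to reduce to centered sequences) with one or two applications of Lemma~\ref{lemma:opposite} (to shift the center into $A_n$), using the classical fact that any apartment of a thick spherical building contains a vertex opposite any prescribed vertex.

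First, by Proposition~\ref{proposition:centered}, I may assume $a_n \to a$, $b_n \to b$ are centered at some $c_n \to c$ with $\D(a_n,b_n) = \D(a,b) = 2$. Denote the vertices of $A_n$ in cyclic order by $v_0^{(n)}, \ldots, v_{2m-1}^{(n)}$, with $v_i^{(n)} \to v_i$ (the vertices of $A$). After passing to a subsequence of $(\varphi_n)$, one may further assume that $\varphi_n c_n \to \tilde c$ and $\varphi_n v_i^{(n)} \to \tilde v_i$ for every $i$, where $\tilde v_0, \ldots, \tilde v_{2m-1}$ are the vertices of $\tilde A$. I would then try to apply Lemma~\ref{lemma:opposite} with $c'_n := v_{i_0}^{(n)} \in A_n$ for some index $i_0$: its hypotheses require that $v_{i_0}$ is opposite $c$ and $\tilde v_{i_0}$ is opposite $\tilde c$ (the corresponding oppositions at finite $n$ then follow from Proposition~\ref{proposition:opposite}). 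Both sets $S := \{i : v_i \text{ opp } c\}$ and $\tilde S := \{i : \tilde v_i \text{ opp } \tilde c\}$ are nonempty by the fact above, and when $S \cap \tilde S \neq \emptyset$, one picks $i_0$ in the intersection and concludes at once.

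The main obstacle is the case $S \cap \tilde S = \emptyset$, which can genuinely occur (even for $m=3$, one checks e.g.\ that if $c\in A$ and $\tilde c\in \tilde A$ the sets $S$, $\tilde S$ are singletons which may well be disjoint). I would handle it by applying Lemma~\ref{lemma:opposite} twice: first with an intermediate sequence $c^*_n$ of vertices opposite $c_n$, chosen so that the limits $c^* := \lim c^*_n$ and $\tilde c^* := \lim \varphi_n c^*_n$ satisfy $c^*$ opposite $c$, $\tilde c^*$ opposite $\tilde c$, and so that some new index $j_0$ satisfies $v_{j_0}$ opposite $c^*$ \emph{and} $\tilde v_{j_0}$ opposite $\tilde c^*$; a second application with $c'_n := v_{j_0}^{(n)}$ then puts the center in $A_n$. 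The existence of such a $c^*_n$ is the technical heart of the proof: to keep the limits inside the opposition sets, one chooses $c^*_n$ inside a compact subset of the open set of vertices opposite $c$ (and whose $\varphi_n$-image sits in a compact subset of the vertices opposite $\tilde c$); one uses Proposition~\ref{proposition:opposite} so that the relevant opposition conditions define open subsets of $\Vertex \Delta$, Lemma~\ref{lemma:non-isolated} so that such vertices can actually be found as limits of vertex sequences, and the thickness of $\Delta$ so that the intersections of all the required open subsets are nonempty. Once such $c^*_n$ is constructed, the two-fold application of Lemma~\ref{lemma:opposite} yields collapsed centered sequences at $c'_n = v_{j_0}^{(n)} \in A_n$, completing the proof.
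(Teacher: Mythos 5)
Your reduction via Proposition~\ref{proposition:centered} and your treatment of the favourable case are fine: if some vertex $v_{i_0}$ of $A$ is opposite $c$ \emph{and} $\tilde v_{i_0}$ is opposite $\tilde c=\lim\varphi_n c_n$, then Proposition~\ref{proposition:opposite} gives the oppositions at finite $n$ and Lemma~\ref{lemma:opposite} concludes; this is exactly the paper's easy case (there with a single opposite vertex $d\in A$). The problem is your handling of the case $S\cap\tilde S=\emptyset$. The construction of the intermediate sequence $c^*_n$ requires simultaneous control of two limits: $c^*=\lim c^*_n$ (which you can arrange) and $\tilde c^*=\lim\varphi_n c^*_n$ (which you cannot). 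The tools you invoke --- openness of the opposition relation, Lemma~\ref{lemma:non-isolated}, thickness --- all live on the domain side; none of them constrains where $\varphi_n$ sends a vertex you have chosen freely, precisely because $(\varphi_n)$ is not equicontinuous (it collapses sequences, and after composing with limits its behaviour on arbitrary vertices is unconstrained). The only vertices whose images under $\varphi_n$ you control are those of $A_n$, the collapsed sequences themselves, and what can be derived from these by Proposition~\ref{proposition:projections} and Lemma~\ref{lemma:gallery}; asking for $c^*_n$ ``whose $\varphi_n$-image sits in a compact subset of the vertices opposite $\tilde c$'' presupposes exactly the control that is missing, and the same objection applies again at your second step, where you need $\tilde v_{j_0}$ opposite $\tilde c^*$. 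So the technical heart of your argument is a genuine gap, not a routine compactness verification.

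The paper's proof resolves this case by a different mechanism: it does not try to manufacture a better intermediate opposite vertex, but exploits the \emph{failure} of opposition in the image. Take $d\in A$ opposite $c$ and a vertex $e_n\to e$ adjacent to $d_n$ but outside $A_n$ (thickness); the gallery of vertices $d_n=e^{(n)}_0, e^{(n)}_1=e_n,\dots,e^{(n)}_m=c_n$ is geodesic, and if $\tilde c$ is not opposite $\tilde d$ then the limit gallery $\tilde e_0,\dots,\tilde e_m$ of its $\varphi_n$-images must stammer. Taking the first stammer $\tilde e_k=\tilde e_{k+2}$ produces \emph{new} collapsed sequences $e^{(n)}_k, e^{(n)}_{k+2}$ centered at $e^{(n)}_{k+1}$, and --- this is the point --- the minimality of $k$ forces $\D(\tilde d,\tilde e_{k+1})=k+1$ along a known geodesic issuing from $\tilde d$. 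One can then pick $c'\in A$ with $\D(c',d)=m-k-1$ (choosing the side so that the geodesic from $\tilde d$ to $\tilde c'$ avoids $\tilde e_1$ when $\tilde e_1\in\tilde A$), which guarantees that $e_{k+1}$ is opposite $c'$ and $\tilde e_{k+1}$ is opposite $\tilde c'$ simultaneously; a single application of Lemma~\ref{lemma:opposite} with $c'_n\in A_n$ then finishes. If you want to repair your write-up, you should replace the construction of $c^*_n$ by an argument of this kind, in which the image-side opposition is deduced from combinatorial constraints (stammering and minimality) rather than assumed to be arrangeable by choice of $c^*_n$.
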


\begin{proof}
By Proposition~\ref{proposition:centered}, we can assume that the two sequences $a_n \to a$ and $b_n \to b$ are centered at some sequence of vertices $c_n \to c$. Let $d$ be a vertex of $A$ opposite $c$ (to find such a vertex, one can take a gallery from $c$ to any vertex of $A$ and extend this one with chambers of $A$ until an opposite vertex is reached) and let $d_n$ be the vertex of $A_n$ (for $n \in \N$) such that $d_n \to d$. Denote also by $\tilde{d}$ the limit of $(\varphi_n d_n)$. By Proposition~\ref{proposition:opposite}, $c_n$ is opposite $d_n$ for sufficiently large $n$. We can also assume by passing to a subsequence that $(\varphi_n c_n)$ converges to some $\tilde{c}$. If $\tilde{c}$ is opposite $\tilde{d}$, then we can immediately complete the proof using Lemma~\ref{lemma:opposite}.

\begin{figure}[b!]
\centering
\includegraphics{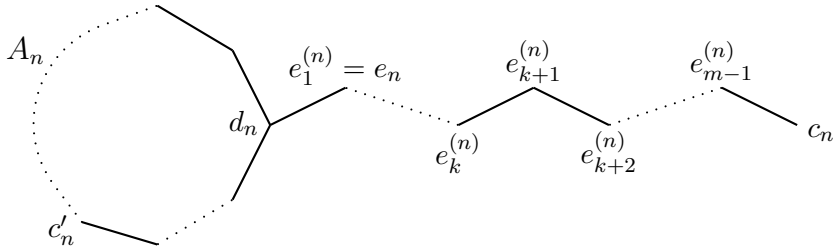}
\caption{Illustration of Proposition~\ref{proposition:centerok}.}\label{picture:centerok}
\end{figure}

If, on the contrary, $\tilde{c}$ is not opposite $\tilde{d}$, then we cannot proceed in this way. In this case, pick a sequence $e_n \to e$ with $e_n$ (resp. $e$) adjacent to $d_n$ (resp. $d$) but not contained in $A_n$ (resp. $A$). It is easy to show that such a sequence exists, using for example Proposition~\ref{proposition:projections}. Since $e$ is almost opposite $c$, $e_n$ is almost opposite $c_n$ for sufficiently large $n$ (Proposition~\ref{proposition:almost-opposite}). For these $n$, there is a gallery from $d_n$ to $c_n$ of length $m = \diam \Delta$ passing through $e_n$. Let $d_n = e_0^{(n)}, e_1^{(n)} = e_n, \ldots, e_{m-1}^{(n)}, e_m^{(n)} = c_n$ be the vertices of this gallery, as illustrated in Figure~\ref{picture:centerok}. After passing to subsequences, we can suppose that $e_i^{(n)} \to e_i$ and $\varphi_n e_i^{(n)} \to \tilde{e}_i$ for each $i \in \{0, \ldots, m\}$. In particular, $\tilde{e}_0 = \tilde{d}$ and $\tilde{e}_m = \tilde{c}$. Since $\tilde{c}$ is not opposite $\tilde{d}$, the gallery of vertices $\tilde{d} = \tilde{e}_0, \tilde{e}_1, \ldots, \tilde{e}_m = \tilde{c}$ must stammer. This means that there exists $k \in \{0, \ldots, m-2\}$ such that $\tilde{e}_k = \tilde{e}_{k+2}$ (if there is more than one such $k$, then we choose the smallest one). In other words, since $e_k \neq e_{k+2}$, the two sequences $e_k^{(n)} \to e_k$ and $e_{k+2}^{(n)} \to e_{k+2}$ are centered at $e_{k+1}^{(n)} \to e_{k+1}$ and collapsed by $(\varphi_n)$. Now take $c'$ a vertex of $A$ such that $\D(c',d) = m-k-1$ and $c'_n$ the vertex of $A_n$ (for $n \in \N$) such that $c'_n \to c'$. Thanks to this choice, $e_{k+1}$ is opposite $c'$ and $\tilde{e}_{k+1}$ is opposite the limit $\tilde{c}'$ of $(\varphi_n c'_n)$ (in the particular case where $\tilde{e}_1 \in \tilde{A}$, we choose $c'$ so that the minimal gallery from $\tilde{d}$ to $\tilde{c}'$ does not contain $\tilde{e}_1$). We can therefore apply Lemma~\ref{lemma:opposite} to the sequences $e_k^{(n)} \to e_k$ and $e_{k+2}^{(n)} \to e_{k+2}$ centered at $e_{k+1}^{(n)} \to e_{k+1}$ and the opposite sequence $c'_n \to c'$.
\end{proof}

The following result eventually shows that we can even suppose that one of the two sequences which are collapsed is contained in the sequence of apartments $A_n \to A$. The proof only works when the diameter $m$ of $\Delta$ is at least $3$, i.e. when $\Delta$ is irreducible.

\begin{proposition}\label{proposition:centerokbetter}
Let $\Delta$ be a thick compact $m$-gon with $m \geq 3$ and $A_n \to A$ be a converging sequence of apartments of $\Delta$. If there exist two sequences $a_n \to a$ and $b_n \to b$ in $\Vertex\Delta$ collapsed by a sequence $\{\varphi_n\} \subseteq \Auttop(\Delta)$ and if $\varphi_n A_n \to \tilde{A}$ for some apartment $\tilde{A}$ of $\Delta$, then there exist two sequences $a'_n \to a'$ and $b'_n \to b'$ centered at $c'_n \to c'$ and collapsed by some subsequence of $(\varphi_n)$, with $c'_n \in A_n$ and $a'_n \in A_n$ for all $n \in \N$.
\end{proposition}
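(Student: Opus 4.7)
The plan is to bootstrap from Proposition~\ref{proposition:centerok}: it already supplies collapsed sequences $a_n \to a$, $b_n \to b$ centered at $c_n \to c$ with $c_n \in A_n$, after passing to a subsequence of $(\varphi_n)$, which I still denote $(\varphi_n)$. Set $x := \lim \varphi_n a_n = \lim \varphi_n b_n$ and $\tilde c := \lim \varphi_n c_n \in \tilde A$. Let $u_n,v_n$ be the two vertices of $A_n$ adjacent to $c_n$; passing to further subsequences, write $u=\lim u_n$, $v=\lim v_n$, $\tilde u=\lim \varphi_n u_n$, $\tilde v=\lim \varphi_n v_n$. Then $\tilde u,\tilde v$ are precisely the two vertices of $\tilde A$ adjacent to $\tilde c$.

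\textbf{Easy case.} If $x\in\{\tilde u,\tilde v\}$, say $x=\tilde u$, then since $a\neq b$ at least one of $a,b$ differs from $u$; say $a\neq u$. The pair $(u_n,a_n)$ is then centered at $c_n\in A_n$, has distinct limits $u\neq a$, is collapsed by $(\varphi_n)$ (both $\varphi_n u_n$ and $\varphi_n a_n$ tend to $\tilde u=x$), and satisfies $u_n\in A_n$. This is the desired output, with $c'_n:=c_n$, $a'_n:=u_n$, $b'_n:=a_n$. The sub-case $x=\tilde v$ is symmetric.

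\textbf{Hard case.} Suppose $x\notin\{\tilde u,\tilde v\}$. Then $\{\tilde c,x\}$ is a chamber at $\tilde c$ not lying in $\tilde A$, and since apartments are closed in $\Cham\Delta$ it follows that $\{c_n,a_n\},\{c_n,b_n\}\notin A_n$ for $n$ large. Here I would rerun the stammering strategy from the proof of Proposition~\ref{proposition:centerok}: pick $d\in A$ opposite $c$, let $d_n\in A_n$ be the vertex with $d_n\to d$ (so $c_n,d_n$ are opposite in $A_n$ and hence in $\Delta$, and by consistent indexing $\varphi_n c_n,\varphi_n d_n$ have antipodal limits $\tilde c,\tilde d\in\tilde A$, hence are opposite in $\Delta$), and select a vertex $e_n\to e$ with $e$ adjacent to $d$ but not in $A$, as in Proposition~\ref{proposition:centerok}. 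Then $e_n$ is almost opposite $c_n$ for $n$ large and determines a gallery $d_n=e_0^{(n)},e_1^{(n)}=e_n,e_2^{(n)},\ldots,e_m^{(n)}=c_n$ of length $m$ from $d_n$ to $c_n$; along a subsequence the image gallery converges to a gallery $\tilde e_0,\ldots,\tilde e_m$ in $\Delta$. The role of $m\geq 3$ is to give enough room in $A$ to arrange the auxiliary choices (of $e$, and subsequently of a vertex $c'\in A$ opposite the middle vertex of the identified stammering triple) so that the stammering triple extracted from this limit gallery furnishes a pair of sequences, centered at a vertex of $A_n$, one of which is itself a vertex of $A_n$. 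Applying Lemma~\ref{lemma:opposite} to this triple and to the opposite vertex of $A_n$ then produces the required $a'_n,b'_n,c'_n$.

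\textbf{Main obstacle.} The hard step is guaranteeing that the combinatorial choices in the stammering argument can be made to localize the stammering at a triple of vertices of $A_n$, rather than merely at a triple of vertices of $\Delta$. This localization fails for $m=2$ because the apartment is too small to accommodate the needed flexibility in choosing $e$ and $c'$; the condition $m\geq 3$ is exactly what allows the stammering index to be positioned on a segment of the gallery lying in $A_n$, which is what ultimately forces $a'_n\in A_n$.
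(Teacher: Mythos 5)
Your easy case (the limit $\tilde{x}$ of $\varphi_n a_n$ landing on a vertex of $\tilde{A}$) is correct and coincides with the first case of the paper's proof. The hard case, however, contains a genuine gap, and in fact the mechanism you propose cannot work as stated. In Proposition~\ref{proposition:centerok}, the stammering of the limit gallery $\tilde{e}_0,\ldots,\tilde{e}_m$ was \emph{forced} by the hypothesis that the images of the two endpoints converge to non-opposite vertices ($\tilde{c}$ not opposite $\tilde{d}$). In your situation both endpoints $d_n$ and $c_n$ lie in $A_n$, and, as you yourself observe, $\varphi_n c_n$ and $\varphi_n d_n$ converge to \emph{opposite} vertices $\tilde{c},\tilde{d}$ of $\tilde{A}$; hence the image of your length-$m$ gallery converges to a gallery between opposite vertices, which has no reason to stammer (generically it is minimal, by Lemma~\ref{lemma:minimal}). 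So there is no stammering triple to extract, and the construction you sketch never gets off the ground. Moreover, even granting a stammer, you explicitly defer the decisive point -- arranging the collapsed pair so that one of the two sequences lies in $A_n$ -- as an unresolved ``main obstacle''; but that is exactly the content of this proposition beyond Proposition~\ref{proposition:centerok}, so the proposal amounts to restating the problem. Note also that simply applying Lemma~\ref{lemma:opposite} with $c'_n = d_n$ (which is legitimate, since $\tilde{c}$ and $\tilde{d}$ are opposite) only re-centers the collapsed pair at a vertex of $A_n$; the projected chambers need not be chambers of $A_n$, so this does not produce $a'_n \in A_n$ either. Finally, your heuristic for the role of $m \ge 3$ is not an argument.

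For comparison, the paper's treatment of the case $\tilde{x} \notin \tilde{A}$ is a different construction: one takes $v'_n \in A_n$ opposite the center $v_n$, a neighbour $w_n \in A_n$ of $v'_n$, and a vertex $u_n \to u \notin A$ adjacent to $w_n$ outside $A_n$; if $\varphi_n u_n$ converges into $\tilde{A}$ one concludes as in the easy case, and otherwise one applies Lemma~\ref{lemma:gallery} to the galleries $[a_n, u_n]$ and $[b_n, v'_n]$, whose images converge to $[\tilde{x},\tilde{u}]$ and $[\tilde{x},\tilde{v}']$ sharing no chamber, and uses projections along these galleries to manufacture a new collapsed pair $(b_n),(f_n)$ centered at the vertex $c_n$ of $[b_n,v'_n]$ adjacent to $b_n$. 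Only then is Lemma~\ref{lemma:opposite} invoked, with $c'_n \in A_n$ chosen at distance $2$ from $v'_n$ and opposite the limit of $(c_n)$ (with images also opposite); the point that forces one projected sequence into $A_n$ is that $\D(b_n,c'_n)=m-1$ and the unique minimal gallery from $b_n$ to $c'_n$ passes through $v_n$ and then runs inside $A_n$, so the projection of the chamber $\{c_n,b_n\}$ onto the panel $c'_n$ is a chamber of $A_n$. None of this is recoverable from the stammering argument you invoke, so the hard case of your proposal stands unproved.
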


\begin{proof}
By Proposition~\ref{proposition:centerok}, we can assume that the sequences $a_n \to a$ and $b_n \to b$ are centered at $v_n \to v$ with $v_n \in A_n$ (and thus $v \in A$). Let $\tilde{x}$ be the common limit of $(\varphi_n a_n)$ and $(\varphi_n b_n)$.

\begin{figure}[b!]
\centering
\includegraphics{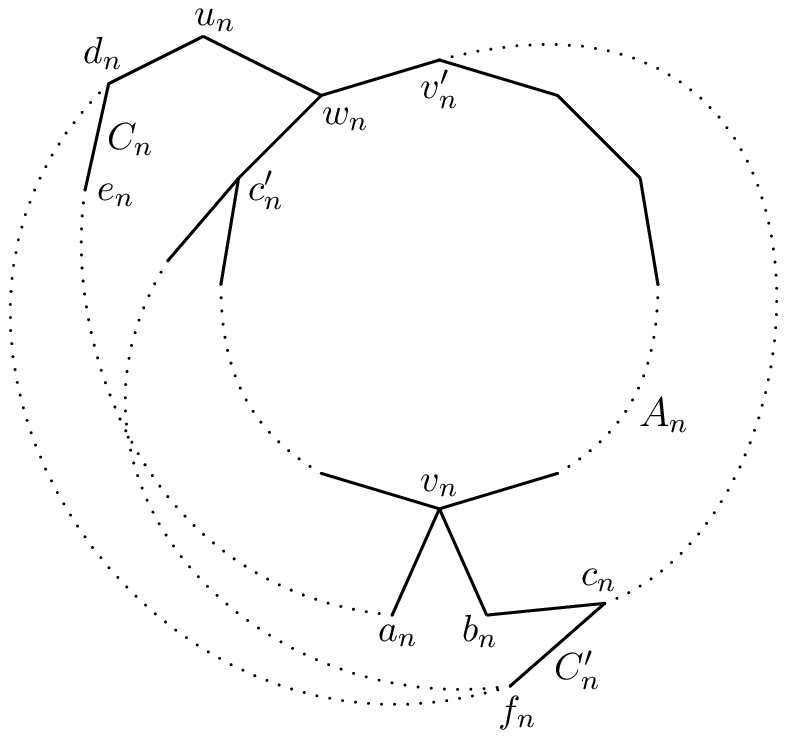}
\caption{Illustration of Proposition~\ref{proposition:centerokbetter}.}\label{picture:centerokbetter}
\end{figure}

If $\tilde{x}$ is a vertex of $\tilde{A}$, then consider $s_n \to s$ the sequence of vertices with $s_n \in A_n$ and $s \in A$ such that $\varphi_n s_n \to \tilde{x}$. Since $a \neq b$, we can assume without loss of generality that $a \neq s$. The sequences $a_n \to a$ and $s_n \to s$ are then collapsed by $(\varphi_n)$, which ends the proof.

Suppose now that $\tilde{x} \not \in \tilde{A}$. Let $v'_n$ (resp. $v'$) be the vertex of $A_n$ (resp. $A$) opposite $v_n$ (resp. $v$), $\tilde{v}'$ be the limit of $(\varphi_n v'_n)$ and $w_n \to w$ be a sequence of vertices where $w_n$ (resp.~$w$) is a vertex of $A_n$ (resp. $A$) and is adjacent to $v'_n$ (resp. $v'$). Consider also a sequence $u_n \to u$ with $u_n$ (resp. $u$) adjacent to $w_n$ (resp. $w$) but outside $A_n$ (resp. $A$). Passing to a subsequence if necessary, we can assume that $\varphi_n u_n \to \tilde{u}$ for some vertex $\tilde{u}$ (adjacent to the limit of $(\varphi_n w_n)$). Once again, if $\tilde{u} \in \tilde{A}$ then the proof is completed. We therefore assume that $\tilde{u} \not \in \tilde{A}$. For sufficiently large $n$, $a_n$ is almost opposite $u_n$ and $b_n$ is almost opposite $v'_n$ (see Proposition~\ref{proposition:almost-opposite}). We can thus draw $[a_n, u_n]$ and $[b_n, v'_n]$ as in Figure~\ref{picture:centerokbetter}. Thanks to Lemma~\ref{lemma:gallery}, $\varphi_n[a_n, u_n] \to [\tilde{x}, \tilde{u}]$ and $\varphi_n[b_n, v'_n] \to [\tilde{x}, \tilde{v}']$. The fact that $\tilde{u} \not \in \tilde{A}$ ensures that the two galleries $[\tilde{x}, \tilde{u}]$ and $[\tilde{x}, \tilde{v}']$ have no shared chambers. Let $c_n$ be the vertex of $[b_n, v'_n]$ adjacent to $b_n$, $d_n$ be the vertex of $[u_n, a_n]$ adjacent to $u_n$, $e_n$ be the vertex of $[d_n, a_n]$ adjacent to $d_n$, and $C_n$ be the chamber whose vertices are $d_n$ and $e_n$. The vertices $e_n$ and $c_n$ are opposite for large~$n$. Now, let $C'_n$ be the projection of $C_n$ on $c_n$ and $f_n$ be the vertex of $C'_n$ different from $c_n$. Since $d_n$ and $c_n$ are almost opposite as well as the limits of $(\varphi_n d_n)$ and $(\varphi_n c_n)$, Lemma~\ref{lemma:gallery} gives $\varphi_n f_n \to \tilde{x}$ and the two sequences $(b_n)$ and $(f_n)$ centered at $(c_n)$ are collapsed by $(\varphi_n)$.

Let finally $c'$ be a vertex of $A$ at distance $2$ from $v'$ such that $c'$ is opposite the limit of $(c_n)$ and let $c'_n$ be the vertex of $A_n$ (for $n \in \N$) such that $c'_n \to c'$. As the limits of $(\varphi_n c_n)$ and $(\varphi_n c'_n)$ are opposite, we can apply Lemma~\ref{lemma:opposite} to get two sequences of vertices centered at $c'_n \to c'$, collapsed by $(\varphi_n)$ and so that one of them is contained in $A_n \to A$.
\end{proof}

\subsection{Proof of the compactness criterion}

The following lemma is almost evident but will play an important role in the next results. As in the statement of Theorem~\ref{theorem:Criterion}, we write $B_v(C,r)$ for the open ball in $\Cham(v)$ centered at $C$ and of radius $r$.

\begin{lemma}\label{lemma:technical}
Let $\Delta$ be a compact polygon and $v^{(n)} \to v$ and $v'^{(n)} \to v'$ be two sequences in $\Vertex\Delta$ such that $v^{(n)}$ (resp. $v$) is opposite $v'^{(n)}$ (resp. $v'$) for all $n \in \N$. Let also $X^{(n)} \to X$ be a sequence in $\Cham\Delta$ such that $X^{(n)}$ (resp. $X$) has vertex $v^{(n)}$ (resp.~$v$) for all $n \in \N$. For each $\eta > 0$, there exists $\eta' > 0$ such that
$$B_{v'^{(n)}}(\proj_{v'^{(n)}}(X^{(n)}), \eta') \subseteq \proj_{v'^{(n)}}(B_{v^{(n)}}(X^{(n)}, \eta)) \ \text{ for all $n \in \N$.}$$
\end{lemma}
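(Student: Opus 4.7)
The plan is to argue by contradiction, relying on the fact that the projection maps are continuous (Proposition~\ref{proposition:projections}) and that projection between opposite residues is a homeomorphism (Corollary~\ref{corollary:homeo}).

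First I would reformulate the statement. By Corollary~\ref{corollary:homeo}, the map $\proj_{v'^{(n)}}|_{\Cham(v^{(n)})}\colon \Cham(v^{(n)}) \to \Cham(v'^{(n)})$ is a homeomorphism, with inverse given by $\proj_{v^{(n)}}|_{\Cham(v'^{(n)})}$. Hence for a chamber $Y \in \Cham(v'^{(n)})$, one has $Y \in \proj_{v'^{(n)}}(B_{v^{(n)}}(X^{(n)},\eta))$ if and only if $\proj_{v^{(n)}}(Y) \in B_{v^{(n)}}(X^{(n)},\eta)$. The lemma therefore amounts to the following uniform claim: there exists $\eta' > 0$ such that, for every $n \in \N$ and every $Y \in \Cham(v'^{(n)})$,
$$ \rho\bigl(Y,\proj_{v'^{(n)}}(X^{(n)})\bigr) < \eta' \;\Longrightarrow\; \rho\bigl(\proj_{v^{(n)}}(Y),X^{(n)}\bigr) < \eta. $$

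Suppose for contradiction that no such $\eta'$ exists. Then there is some $\eta > 0$ and, for each $k \in \N^*$, an index $n_k$ together with a chamber $Y^{(n_k)} \in \Cham(v'^{(n_k)})$ such that $\rho(Y^{(n_k)}, \proj_{v'^{(n_k)}}(X^{(n_k)})) < 1/k$ while $\rho(\proj_{v^{(n_k)}}(Y^{(n_k)}), X^{(n_k)}) \geq \eta$. By compactness of $\Cham \Delta$, passing to a subsequence we may assume $Y^{(n_k)} \to Y$ for some $Y \in \Cham \Delta$. Since $X^{(n_k)} \to X$, $v'^{(n_k)} \to v'$ and $v'$ is opposite $v$ (which is a vertex of $X$), Proposition~\ref{proposition:projections} yields $\proj_{v'^{(n_k)}}(X^{(n_k)}) \to \proj_{v'}(X)$, so $Y = \proj_{v'}(X)$ by the first inequality. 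Similarly, since $v^{(n_k)} \to v$ is opposite $v'^{(n_k)} \to v'$ and $Y^{(n_k)} \to Y$ has vertex $v'^{(n_k)} \to v'$, another application of Proposition~\ref{proposition:projections} gives $\proj_{v^{(n_k)}}(Y^{(n_k)}) \to \proj_v(Y) = \proj_v(\proj_{v'}(X)) = X$, the last equality being the fact that the two projections between $\Cham(v)$ and $\Cham(v')$ are mutually inverse (Corollary~\ref{corollary:homeo}). This contradicts $\rho(\proj_{v^{(n_k)}}(Y^{(n_k)}), X^{(n_k)}) \geq \eta$ and $X^{(n_k)} \to X$.

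The proof is essentially a routine continuity argument; the only point requiring some care is the reformulation via the homeomorphism $\proj_{v'^{(n)}}|_{\Cham(v^{(n)})}$, which is needed to rewrite the set $\proj_{v'^{(n)}}(B_{v^{(n)}}(X^{(n)},\eta))$ as a preimage of a ball under a continuous map, so that the uniform estimate in $n$ can be obtained by a single contradiction/compactness argument.
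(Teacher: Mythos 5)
Your proof is correct and follows essentially the same route as the paper's: argue by contradiction, extract a sequence of almost-violating chambers, and use Proposition~\ref{proposition:projections} twice together with the fact that $\proj_{v^{(n)}}$ and $\proj_{v'^{(n)}}$ are mutually inverse on the relevant panels (Corollary~\ref{corollary:homeo}) to force $\proj_{v^{(n_k)}}(Y^{(n_k)}) \to X$, contradicting the lower bound $\eta$. The explicit reformulation of membership in $\proj_{v'^{(n)}}(B_{v^{(n)}}(X^{(n)},\eta))$ via the homeomorphism is exactly the step the paper uses implicitly in its final contradiction.
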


\begin{proof}
Fix $\eta > 0$ and suppose by contradiction that $\eta'$ does not exist. This means that we can find, after passage to a subsequence, a sequence $(Y^{(n)})$ with $Y^{(n)} \in B_{v'^{(n)}}(\proj_{v'^{(n)}}(X^{(n)}), \frac{1}{n})$ but $Y^{(n)} \not \in \proj_{v'^{(n)}}(B_{v^{(n)}}(X^{(n)}, \eta))$ for all $n \in \N^*$. By Proposition~\ref{proposition:projections}, $Y^{(n)} \to \proj_{v'}(X)$ and thus $\proj_{v^{(n)}}(Y^{(n)}) \to \proj_{v}(\proj_{v'}(X)) = X$. This contradicts the fact that $\proj_{v^{(n)}}(Y^{(n)}) \not \in B_{v^{(n)}}(X^{(n)}, \eta)$ for all $n \in \N^*$.
\end{proof}

The next definition will turn out to be convenient.

\begin{definition}
Let $X^{(n)} \to X$ and $Y^{(n)} \to Y$ be two sequences in $\Cham \Delta$ with $X^{(n)}$ (resp.~$X$) adjacent to $Y^{(n)}$ (resp. $Y$) in vertex $v^{(n)}$ (resp. $v$) for all $n \in \N$. Let $(\varphi_n)$ be a sequence in $\Auttop(\Delta)$ such that $\varphi_n Y^{(n)} \to \tilde{Y}$ for some $\tilde{Y} \in \Cham \Delta$. For $\eta > 0$, we say that $(X, Y)$ is \textbf{$\eta$-correct} (the sequences $X_n \to X$, $Y_n \to Y$ and $(\varphi_n)$ being clear from the context) if for each sequence $(T^{(n)})$ with $T^{(n)} \in B_{v^{(n)}}(X^{(n)}, \eta)$ the sequence $(\varphi_n T^{(n)})$ does not accumulate at $\tilde{Y}$. We say that $(X,Y)$ is \textbf{correct} if it is $\eta$-correct for some $\eta > 0$.
\end{definition}

The following lemma is the key result for the proof of Theorem~\ref{theorem:Criterion}. Indeed, it makes it possible to find some kind of uniform convergence from just a simple convergence. Figure~\ref{picture:key} should make the statement easier to digest.

\begin{lemma}\label{lemma:key}
Let $\Delta$ be a compact polygon and $A^{(n)} \to A$ be a converging sequence of apartments of $\Delta$. Let $v_0^{(n)}, \ldots, v_{2m-1}^{(n)}$ (resp. $v_0, \ldots, v_{2m-1}$) be the vertices of $A^{(n)}$ (resp.~$A$) such that $v_i^{(n)} \to v_i$ for $i \in \{0, \ldots, 2m-1\}$ and let $C_i^{(n)}$ (resp. $C_i$) be the chamber having vertices $v_i^{(n)}$ (resp. $v_i$) and $v_{i+1}^{(n)}$ (resp. $v_{i+1}$) for $i \in \{0, \ldots, 2m-1\}$, where $v_{2m}^{(n)} := v_0^{(n)}$ and $v_{2m} := v_0$.
Let also $b^{(n)} \to b \not \in A$ be a sequence in $\Vertex\Delta$ such that $b^{(n)}$ is adjacent to $v_0^{(n)}$ for all $n \in \N$. Fix $\eta > 0$. Then there exists $\eta' > 0$ such that if $(\varphi_n)$ is a sequence in $\Auttop(\Delta)$ satisfying
\begin{enumerate}[(a)]
\item $\varphi_n v_i^{(n)} \to \tilde{v}_i$ for each $i \in \{0, \ldots, 2m-1\}$ and the set of vertices $\tilde{v}_0, \ldots, \tilde{v}_{2m-1}$ form an apartment $\tilde{A}$ (whose chambers are denoted by $\tilde{C}_0, \ldots, \tilde{C}_{2m-1}$ with $\varphi_n C_i^{(n)} \to \tilde{C}_i$),
\item $(C_{m+1}, C_m)$ is $\eta$-correct (with respect to $(\varphi_n)$),
\item $\varphi_n b^{(n)} \to \tilde{v}_1$;
\end{enumerate}
then for each sequence $(R^{(n)})$ with $R^{(n)} \in B_{v_{m-1}^{(n)}}(C_{m-2}^{(n)}, \eta')$ for all $n \in \N$, $\varphi_n R^{(n)} \to \tilde{C}_{m-2}$.
\end{lemma}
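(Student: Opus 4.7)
We would proceed by contradiction. Suppose no such $\eta'$ exists. Then a diagonal extraction (shrinking $\eta'$ along $1/k$) produces a sequence $R^{(n)} \in \Cham(v_{m-1}^{(n)})$ with $\rho(R^{(n)}, C_{m-2}^{(n)}) \to 0$ and, along a subsequence, $\varphi_n R^{(n)} \to \tilde{R}$ for some $\tilde{R} \in \Cham(\tilde{v}_{m-1}) \setminus \{\tilde{C}_{m-2}\}$. The plan is to contradict condition~(b) by exhibiting a sequence $T^{(n)} \in B_{v_{m+1}^{(n)}}(C_{m+1}^{(n)}, \eta)$ (for all large $n$) such that $\varphi_n T^{(n)}$ accumulates at $\tilde{C}_m$, directly violating the $\eta$-correctness of $(C_{m+1}, C_m)$.

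The chamber $T^{(n)}$ is built by transporting $R^{(n)}$ around the apartment $A^{(n)}$ through a finite chain of projections. The basic tools are the continuity of projections (Proposition~\ref{proposition:projections}), the homeomorphism $\proj_w : \Cham(v) \to \Cham(w)$ between opposite vertices $v, w$ (Corollary~\ref{corollary:homeo}), and the quantitative ball transfer $B_{w^{(n)}}(\proj_{w^{(n)}} X^{(n)}, \eta') \subseteq \proj_{w^{(n)}}(B_{v^{(n)}}(X^{(n)}, \eta))$ provided by Lemma~\ref{lemma:technical}. The bookkeeping on the image side is governed by condition~(a), which pins down the limit apartment $\tilde{A}$ and all the image-limits $\tilde{v}_i, \tilde{C}_i$ needed to identify the limit of each projected chamber along the chain.

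The crucial geometric input that makes the chain land on the $C_{m+1}^{(n)}$-side of $v_{m+1}^{(n)}$ (rather than on the $C_m^{(n)}$-side, which would yield the non-contradictory image limit $\tilde{C}_{m+1}$ instead of $\tilde{C}_m$) is condition~(c). The chamber $B^{(n)} := \{v_0^{(n)}, b^{(n)}\}$ converges to a chamber $B \notin A$, yet $\varphi_n B^{(n)} \to \tilde{C}_0$, the same limit as $\varphi_n C_0^{(n)}$: this \emph{collapse} at $v_0^{(n)}$ is what allows one to substitute $B^{(n)}$ in place of $C_0^{(n)}$ (or vice versa) at the crossing of $v_0^{(n)}$ in the projection chain. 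Inserting this swap converts the bad datum $\tilde{R} \neq \tilde{C}_{m-2}$ at $v_{m-1}^{(n)}$ into an image accumulating at $\tilde{C}_m$ at $\tilde{v}_{m+1}$.

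The main obstacle I expect is the uniform bookkeeping of the ball radii through the chain: one applies Lemma~\ref{lemma:technical} a bounded number of times (one per projection step) to pass from the given $\eta$ at $v_{m+1}^{(n)}$ down to the desired $\eta'$ at $v_{m-1}^{(n)}$, while ensuring that the substitution of $B^{(n)}$ for $C_0^{(n)}$ does not disturb the final proximity to $C_{m+1}^{(n)}$. A secondary subtlety is verifying that the side at each intermediate vertex is tracked correctly; this is dictated precisely by the collapse direction $\varphi_n b^{(n)} \to \tilde{v}_1$ singled out in~(c) (rather than $\tilde{v}_{2m-1}$), which orients the swap at $v_0^{(n)}$ and hence the side at $v_{m+1}^{(n)}$.
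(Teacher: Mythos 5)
Your overall target is the right one (produce $T^{(n)} \in B_{v_{m+1}^{(n)}}(C_{m+1}^{(n)},\eta)$ whose image accumulates at $\tilde{C}_m$, contradicting (b), with $\eta'$ obtained from Lemma~\ref{lemma:technical}), but two things in your plan do not work as stated. First, the opening reduction is quantified wrongly: $\eta'$ must be chosen from the geometric data $A^{(n)}\to A$, $b^{(n)}\to b$, $\eta$ alone, \emph{before} any $(\varphi_n)$ is given, because in the later application the same $\eta'$ has to serve all admissible (sub)sequences. Negating the statement and diagonalizing over $\eta'=1/k$ produces, for each $k$, a possibly different sequence $(\varphi_n^{(k)})$ with its own limit apartment $\tilde{A}^{(k)}$; the diagonal sequence need not satisfy (a)--(c), and there is no single $\tilde{C}_{m-2}$ for your limit $\tilde{R}$ to avoid. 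The correct route (the paper's) is to \emph{define} $\eta'$ first, by two applications of Lemma~\ref{lemma:technical} along the chain $v_{m+1}^{(n)} \to b^{(n)} \to v_{m-1}^{(n)}$ (note $b$ is opposite both $v_{m+1}$ and $v_{m-1}$; no transport ``around the apartment'' and no substitution of the chamber $\{v_0^{(n)},b^{(n)}\}$ for $C_0^{(n)}$ occurs --- that swap is not an actual step of any argument), and then argue by contradiction for a fixed admissible $(\varphi_n)$, setting $S^{(n)}=\proj_{b^{(n)}}(R^{(n)})$ and $T^{(n)}=\proj_{v_{m+1}^{(n)}}(S^{(n)})$.

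Second, and this is the substantive gap: the claim that ``condition (a) pins down the image-limits needed to identify the limit of each projected chamber along the chain'' is precisely where the proof lives, and it is false as a matter of bookkeeping. You cannot compute $\lim \varphi_n S^{(n)} = \lim \proj_{\varphi_n b^{(n)}}(\varphi_n R^{(n)})$ from Proposition~\ref{proposition:projections}, because by (c) the limit of $(\varphi_n b^{(n)})$ is $\tilde{v}_1$, which is \emph{not} opposite $\tilde{v}_{m-1}$: the limit configuration is degenerate, and a priori $\varphi_n S^{(n)}$ could tend to $\tilde{C}_0$, in which case $\varphi_n T^{(n)} \to \proj_{\tilde{v}_{m+1}}(\tilde{C}_0) = \tilde{C}_{m+1}$ and no contradiction with the $\eta$-correctness of $(C_{m+1},C_m)$ arises. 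The missing mechanism is the one the paper supplies: let $r^{(n)}$ be the vertex of $R^{(n)}$ other than $v_{m-1}^{(n)}$; the hypothesis $\tilde{R}\neq\tilde{C}_{m-2}$ forces $\tilde{r}\neq\tilde{v}_{m-2}$, so $\tilde{v}_1$ and $\tilde{r}$ are almost opposite, and Lemma~\ref{lemma:gallery} applied to $[b^{(n)},r^{(n)}]$ gives $\varphi_n[b^{(n)},r^{(n)}]\to[\tilde{v}_1,\tilde{r}]$, hence $\varphi_n S^{(n)}\to\tilde{C}_1$; only then does Proposition~\ref{proposition:projections} (legitimately, since $\tilde{v}_1$ is opposite $\tilde{v}_{m+1}$) yield $\varphi_n T^{(n)}\to\proj_{\tilde{v}_{m+1}}(\tilde{C}_1)=\tilde{C}_m$. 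This gallery-limit step is exactly where the ``bad datum'' $\tilde{R}\neq\tilde{C}_{m-2}$ is consumed and where the side at $\tilde{v}_{m+1}$ is decided; your sketch asserts the conclusion of this step but provides no argument for it.
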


\begin{proof}
\begin{figure}
\centering
\includegraphics{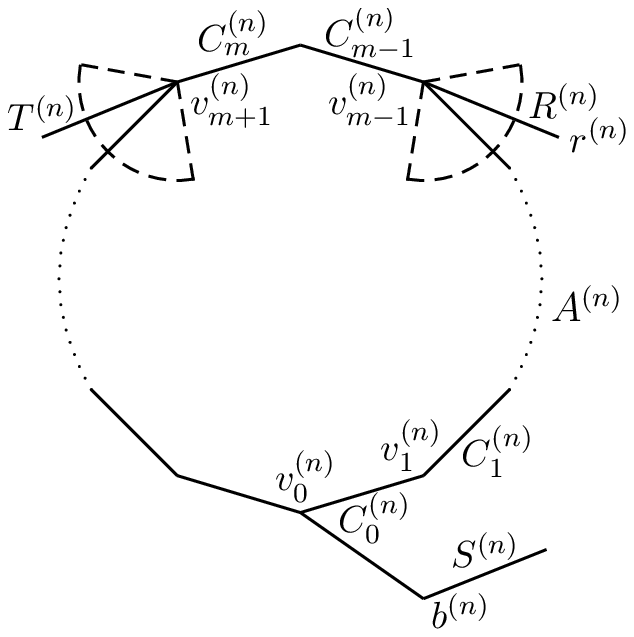}
\caption{Illustration of Lemma~\ref{lemma:key}.}\label{picture:key}
\end{figure}

Since $b \not\in A$, we can assume that $b^{(n)} \not \in A^{(n)}$ for all $n \in \N$.
We consider the balls $B_{v_{m+1}^{(n)}}(C_{m+1}^{(n)}, \eta)$ and apply Lemma~\ref{lemma:technical} twice: first with the sequences $v_{m+1}^{(n)} \to v_{m+1}$ and $b^{(n)} \to b$ and then with the sequences $b^{(n)} \to b$ and $v_{m-1}^{(n)} \to v_{m-1}$. This gives $\eta' > 0$ such that
$$B_{v_{m-1}^{(n)}}(C_{m-2}^{(n)}, \eta') \subseteq \proj_{v_{m-1}^{(n)}}(\proj_{b^{(n)}}(B_{v_{m+1}^{(n)}}(C_{m+1}^{(n)}, \eta))) \ \text{ for all $n \in \N$.} \quad (*)$$
We now prove that $\eta'$ satisfies the statement.

Consider a sequence $(R^{(n)})$ with $R^{(n)} \in B_{v_{m-1}^{(n)}}(C_{m-2}^{(n)}, \eta')$ and suppose for a contradiction that there exists a sequence $(\varphi_n)$ satisfying the conditions of the statement but such that $(\varphi_n R^{(n)})$ does not converge to $\tilde{C}_{m-2}$. Passing to a subsequence, we can assume that $\varphi_n R^{(n)} \to \tilde{R} \neq \tilde{C}_{m-2}$ and $R^{(n)} \to R$. Now take $S^{(n)} = \proj_{b^{(n)}}(R^{(n)})$ and $S = \proj_b(R)$. By Proposition~\ref{proposition:projections}, $S^{(n)} \to S$. Finally, define $T^{(n)} = \proj_{v_{m+1}^{(n)}}(S^{(n)})$ and $T = \proj_{v_{m+1}}(S)$. Once again, $T^{(n)} \to T$. By $(*)$, $T^{(n)} \in B_{v_{m+1}^{(n)}}(C_{m+1}^{(n)}, \eta)$ for all $n \in \N$. We now show that the convergence of $(\varphi_n R^{(n)})$ to $\tilde{R} \neq \tilde{C}_{m-2}$ necessarily implies $\varphi_n T^{(n)} \to \tilde{C}_m$, which is impossible since $(C_{m+1},C_m)$ is $\eta$-correct.

Denote by $r^{(n)}$ the vertex of $R^{(n)}$ different from $v_{m-1}^{(n)}$ and by $\tilde{r}$ the vertex of $\tilde{R}$ different from $\tilde{v}_{m-1}$. Since $b^{(n)}$ and $r^{(n)}$ are almost opposite (for large enough $n$) as well as $\tilde{v}_1$ and~$\tilde{r}$, $\varphi_n[b^{(n)}, r^{(n)}] \to [\tilde{v}_1, \tilde{r}]$ (Lemma~\ref{lemma:gallery}) and in particular $\varphi_n S^{(n)} \to \tilde{C}_1$. As $v_1$ is opposite $v_{m+1}$, we therefore get by Proposition~\ref{proposition:projections}:
\[
\varphi_n T^{(n)} = \varphi_n \left( \proj_{v_{m+1}^{(n)}}(S^{(n)}) \right) = \proj_{\varphi_nv_{m+1}^{(n)}}(\varphi_n S^{(n)}) \to \proj_{\tilde{v}_{m+1}}(\tilde{C}_1) = \tilde{C}_m. \qedhere
\]  
\end{proof}

We are now able to prove Theorem~\ref{theorem:Criterion}.

\begin{proof}[Proof of Theorem~\ref{theorem:Criterion}]
If $\Delta$ is finite, then $\Auttop(\Delta)$ and $J_\varepsilon := J_\varepsilon(C,C',U,U',E_0,E_1)$ are finite and the latter is thus compact. We will therefore assume that $\Delta$ is infinite. By Lemma~\ref{lemma:non-isolated}, this implies that $\Cham(v)$ is infinite for each vertex $v$ of $\Delta$.

Since $J_\varepsilon$ is closed in $\Auttop(\Delta)$, it is compact if and only if it is relatively compact in $\Auttop(\Delta)$. Suppose for a contradiction that it is not relatively compact. By Proposition~\ref{proposition:closer}, there exist two sequences $a^{(n)} \to a$ and $b^{(n)} \to b$ collapsed by a sequence $(\varphi_n)$ in $J_\varepsilon$ or in $J_\varepsilon^{-1}$. Observe both cases:

\begin{enumerate}[(1)]
\item If $\{\varphi_n\} \subseteq J_\varepsilon$, then we define $C^{(n)} = C$, $C'^{(n)} = C'$, $D_i^{(n)} = D_i$ and $E_i^{(n)} = E_i$ for all $n \in \N$ and $i \in \{0,1\}$. Let also $A^{(n)} = A$ be the apartment containing $C$ and $C'$ (for all $n \in \N$). Passing to a subsequence, we can assume that $\varphi_n C^{(n)} \to \tilde{C}$ and $\varphi_n C'^{(n)} \to \tilde{C}'$. By definition of $J_\varepsilon$, $\tilde{C}$ and $\tilde{C}'$ are opposite, which means that $\varphi_n A^{(n)} \to \tilde{A}$, the apartment containing $\tilde{C}$ and $\tilde{C}'$. The definition of $J_\varepsilon$ is also such that $(C, D_i)$ and $(D_i, C)$ are $\varepsilon$-correct for $i \in \{0, 1\}$. Finally, we can assume that $\varphi_n E_i^{(n)} \to \tilde{E}_i$ and $\tilde{E}_i \not \in \tilde{A}$ for $i \in \{0,1\}$.

\item If $\{\varphi_n\} \subseteq J_\varepsilon^{-1}$, then we define $F^{(n)} = \varphi_n^{-1} C$, $F'^{(n)} = \varphi_n^{-1} C'$, $G_i^{(n)} = \varphi_n^{-1} D_i$ and $H_i^{(n)} = \varphi_n^{-1} E_i$ for all $n \in \N$ and $i \in \{0,1\}$. Passing to a subsequence, we can assume that $F^{(n)} \to F$, $F'^{(n)} \to F'$, $G_i^{(n)} \to G_i$ and $H_i^{(n)} \to H_i$. Note that $F$ is opposite $F'$ since $\{\varphi_n^{-1}\} \subseteq J_\varepsilon$, which allows us to denote by $A^{(n)}$ (resp. $A$) the apartment containing $F^{(n)}$ (resp. $F$) and $F'^{(n)}$ (resp. $F'$) for all $n \in \N$. By definition of $J_\varepsilon$, $H_i \not \in A$. Moreover, $\varphi_n F^{(n)} = C \to C =: \tilde{F}$, $\varphi_n F'^{(n)} = C' \to C'=: \tilde{F'}$ and $\varphi_n H_i^{(n)} = E_i \to E_i =: \tilde{H}_i$. Hence, $\varphi_n A^{(n)} \to \tilde{A}$, the apartment containing $\tilde{F}$ and $\tilde{F}'$. It is easy to see that $(F, G_i)$ and $(G_i, F)$ are $\varepsilon$-correct (with respect to $(\varphi_n)$) and clearly $\tilde{H}_i \not \in \tilde{A}$ for $i \in \{0,1\}$.
\end{enumerate}

The situations in (1) and (2) are exactly the same, replacing letters $F$, $G$ and $H$ in (2) by letters $C$, $D$ and $E$ respectively. For this reason, we from now on assume that we are in case (1) and only use the objects $C^{(n)} \to C$, $C'^{(n)} \to C'$, $D_i^{(n)} \to D_i$ and $E_i^{(n)} \to E_i$ satisfying the conditions stated above. Let $v_0, \ldots, v_{2m-1}$ be the vertices of $A$ in the natural order so that $E_0$ has vertex $v_0$ and $E_1$ has vertex $v_1$. Once again, $v_0^{(n)}, \ldots, v_{2m-1}^{(n)}$ denotes the corresponding vertices in $A^{(n)}$ and $\tilde{v}_0, \ldots, \tilde{v}_{2m-1}$ the corresponding vertices in $\tilde{A}$.

\begin{claim}\label{claim:correct}
$(X,Y)$ is correct for any two adjacent chambers $X$ and $Y$ in $A$.
\end{claim}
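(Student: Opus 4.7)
The plan is to argue by contradiction, iteratively applying Lemma~\ref{lemma:key} to propagate the correctness that the definition of $J_\varepsilon$ already places in our hands around the apartment $A$. The four distance inequalities in the definition of $J_\varepsilon$ directly give the $\varepsilon$-correctness of the pairs $(C_0, C_1)$, $(C_1, C_0)$, $(C_0, C_{2m-1})$, $(C_{2m-1}, C_0)$ (using $C_0 = C$, $C_1 = D_1$, $C_{2m-1} = D_0$): each inequality $\rho(\varphi_n(C^{(n)}), \varphi_n(B_{v_i^{(n)}}(D_i^{(n)}, \varepsilon))) \geq \varepsilon$ forces every $\varphi_n T^{(n)}$ with $T^{(n)} \in B_{v_i^{(n)}}(D_i^{(n)}, \varepsilon)$ to stay at distance $\geq \varepsilon$ from $\tilde{C} = \lim \varphi_n C^{(n)}$, which is exactly the non-accumulation at $\tilde{C}$ defining correctness. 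This supplies correctness at the two vertices $v_0$ and $v_1$ of $A$ and is the base case of the propagation.

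To supply the ``off-apartment ghost'' vertex sequence required by hypothesis~(c) of Lemma~\ref{lemma:key}, I would exploit the standing assumption that $J_\varepsilon$ is not relatively compact. Proposition~\ref{proposition:closer} yields collapsed sequences $a^{(n)} \to a$, $b^{(n)} \to b$ for $(\varphi_n)$, and Proposition~\ref{proposition:centerokbetter} refines them to sequences $a'^{(n)} \to a'$, $b'^{(n)} \to b'$ centered at some $c'^{(n)} \to c' \in \Vertex A$ with $c'^{(n)}, a'^{(n)} \in A^{(n)}$, collapsed by a subsequence of $(\varphi_n)$. A key observation is that $b'^{(n)} \notin A^{(n)}$ for all large $n$: otherwise both $a'^{(n)}$ and $b'^{(n)}$ would be the two distinct $A^{(n)}$-neighbours of $c'^{(n)}$, and the apartment convergence $\varphi_n A^{(n)} \to \tilde{A}$ would force their images to converge to two distinct vertices of $\tilde{A}$, contradicting the collapse. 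Hence $b' \notin A$ and $\varphi_n b'^{(n)} \to \tilde{a}'$, a vertex of $\tilde{A}$ adjacent to $\tilde{c}'$, which is exactly the data required by~(c) under the relabeling ``new~$v_0 = c'$, new~$v_1 = a'$''.

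In this relabeling, Lemma~\ref{lemma:key} applies once its hypothesis~(b), the $\eta$-correctness of $(C_{m+1}, C_m)_{\mathrm{new}}$, reduces to one of the four already-known correct pairs. Writing $c' = v_p$ in the original labeling, this happens for $p$ in a short range of positions near the opposite chamber $C'$ (namely $p \in \{m-1, m, m+1, m+2\}$) and an appropriate choice of orientation $\sigma \in \{\pm 1\}$; Lemma~\ref{lemma:key} then yields correctness of a new pair of $A$ at vertex $v_{p \pm (m-1)}$. The proof then iterates: each newly established correctness serves in turn as hypothesis~(b) for a further application of Lemma~\ref{lemma:key} under a shifted relabeling, with the matching $b^{(n)}$-sequence transported across opposite vertices of $A^{(n)}$ by the projection construction of Lemma~\ref{lemma:opposite} (which sends a centered collapse at one vertex to a centered collapse at an opposite vertex). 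Since each step shifts the ``position of correctness'' by $\pm(m-1)$ around the cycle of $A$, finitely many iterations (linear in $m$) reach every vertex of $A$ and thus yield correctness of every adjacent pair.

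The principal obstacle is the bookkeeping of simultaneously maintaining (i) a relabeling of $A$ that aligns an already known correct pair with hypothesis~(b), and (ii) a $b^{(n)}$-sequence delivering hypothesis~(c) for the \emph{same} relabeling. Both pieces of data descend from the initial collapse but must be kept synchronized at each stage around the cycle of $A$, which is what forces the argument into an inductive round-trip along the apartment rather than a single application of the Key Lemma.
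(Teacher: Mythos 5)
Your base step is fine (the inequalities defining $J_\varepsilon$ do give the $\varepsilon$-correctness of $(C,D_i)$ and $(D_i,C)$, i.e.\ of the vertices $v_0,v_1$), but the propagation mechanism has two genuine gaps. First, the inference ``$b'^{(n)}\notin A^{(n)}$ for all large $n$, hence $b'\notin A$'' is invalid: a sequence of off-apartment vertices adjacent to $c'^{(n)}$ can perfectly well converge to the \emph{other} $A$-neighbour of $c'$, and nothing available at this stage excludes it --- excluding it is precisely a correctness statement at the arbitrary vertex $c'=v_p$, which is what you are trying to prove and is only known at $v_0,v_1$. (In the paper this configuration is ruled out only in the next claim, \emph{after} Claim~\ref{claim:correct} is available: there one argues $b\neq v_{j-1}$ ``since $(C_{j-1},C_j)$ is correct''.) Since Lemma~\ref{lemma:key} genuinely needs $b\notin A$ (its proof projects through $b^{(n)}$, which must be opposite $v_{m\pm 1}^{(n)}$), your hypothesis~(c) may simply not be producible. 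Second, even granting (c)-data at $v_p$, both the position $p$ and the orientation of the image limit $\tilde a'$ are imposed by the collapse, not chosen by you, while hypothesis~(b) demands an already-correct ordered pair sitting exactly $m+1$ steps away from $v_p$ in that forced orientation; correctness is known only at $v_0,v_1$, and Lemma~\ref{lemma:opposite} only lets you move the collapse to the single opposite vertex $v_{p+m}$ (with the same orientation). So for all but a few exceptional values of $p$ the very first application of Lemma~\ref{lemma:key} is unavailable and the induction cannot start; moreover each successful application only yields correctness of one ordered pair at the new vertex, whereas the next step (and the eventual use of the claim) needs both orders, and you never verify that the newly created (b)- and (c)-data land in the required relative position.

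The paper avoids all of this: Claim~\ref{claim:correct} is proved without Lemma~\ref{lemma:key} and without the collapsed sequences. The off-apartment chambers $E_0,E_1$, whose existence and the fact that $\tilde E_i\notin\tilde A$ are built into the definition of $J_\varepsilon$, provide at \emph{every} vertex of $A$ an off-apartment ``mirror'': taking the gallery from $E_0$ to $v_m$ with vertices $w_1,\dots,w_{m-1}$, the vertex $w_{m-i}$ is opposite both $v_i$ and $v_{-i}$, and a double application of Lemma~\ref{lemma:technical} together with Proposition~\ref{proposition:projections} transports ($\eta$-)correctness from $v_i$ to $v_{-i}$; the same construction with $E_1$ gives $v_i\Rightarrow v_{2-i}$. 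Composing the two reflections is the rotation $i\mapsto i+2$, so starting from the known correct vertices $v_0,v_1$ one reaches every vertex of $A$. If you want to salvage your scheme, note that it is exactly this projection-through-$E_0,E_1$ transport that replaces both your missing ``$b'\notin A$'' step and your synchronization problem; as written, the proposal does not establish the claim.
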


\begin{claimproof}
\begin{figure}
\centering
\includegraphics{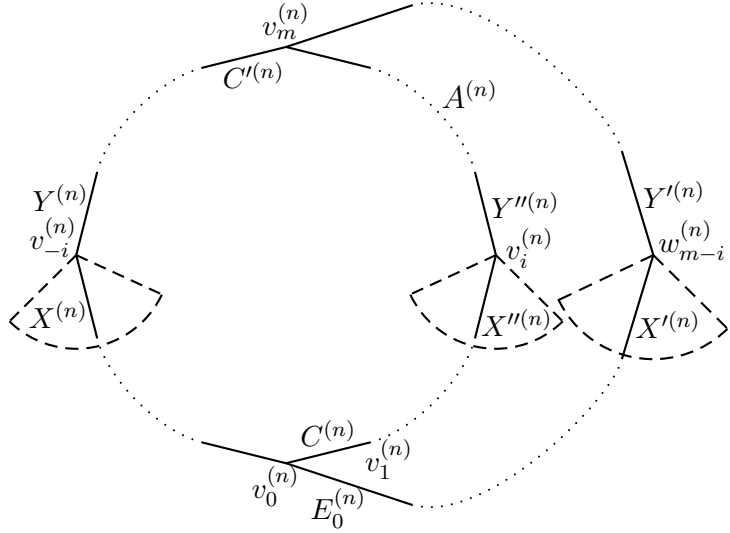}
\caption{Illustration of Claim~\ref{claim:correct}.}\label{picture:nonconvergence}
\end{figure}
In the proof of this claim, we say that $v_i$ is correct if $(X,Y)$ and $(Y,X)$ are correct where $X$ and $Y$ are the two chambers of $A$ having vertex $v_i$. We already know that $v_0$ and $v_1$ are correct, and we want to prove that every vertex of $A$ is correct.

We actually show that if $v_i$ is correct, then $v_{-i}$ and $v_{2-i}$ are also correct (the indices being considered modulo $2m$). To do so, draw the minimal gallery from $E_0$ (resp. $E_0^{(n)}$) to $v_m$ (resp. $v_m^{(n)}$) and denote by $w_1, \ldots, w_{m-1}$ (resp. $w_1^{(n)}, \ldots, w_{m-1}^{(n)}$) its vertices (see Figure~\ref{picture:nonconvergence}). By Lemma~\ref{lemma:gallery}, $w_i^{(n)} \to w_i$ for each $i \in \{1,\ldots, m-1\}$. Now assume that $v_i$ is correct for some $i \in \{1, \ldots, m-1\}$. To show that $v_{-i}$ is also correct, consider $X$ a chamber of $A$ having vertex $v_{-i}$ and $Y$ the other chamber of this apartment having vertex $v_{-i}$. Project these two chambers on $w_{m-i}$ by defining $X' = \proj_{w_{m-i}}(X)$ and $Y' = \proj_{w_{m-i}}(Y)$, and then on $v_i$ by defining $X'' = \proj_{v_i}(X')$ and $Y'' = \proj_{v_i}(Y')$. The chambers $X''$ and $Y''$ obviously are the chambers of $A$ having vertex $v_i$. Define $X^{(n)}$, $Y^{(n)}$, $X'^{(n)}$, $Y'^{(n)}$, $X''^{(n)}$ and $Y''^{(n)}$ in the natural way. Since $v_i$ is correct, there exists $\eta > 0$ such that $(X'',Y'')$ is $\eta$-correct. We can assume that $w_{m-i}^{(n)}$ is opposite $v_i^{(n)}$ and $v_{-i}^{(n)}$ for all $n \in \N$ since $E_0 \not \in A$. This allows us to go from $v_i$ to $v_{-i}$ via $w_{m-i}$ using Lemma~\ref{lemma:technical} twice. This gives $\eta' > 0$ such that
$$B_{v_{-i}^{(n)}}(X^{(n)}, \eta') \subseteq \proj_{v_{-i}^{(n)}}(\proj_{w_{m-i}^{(n)}}(B_{v_i^{(n)}}(X''^{(n)}, \eta))) \ \text{ for all $n \in \N$.} \quad (**)$$
We now show that $(X,Y)$ is $\eta'$-correct. Suppose for a contradiction that there exists $(T^{(n)})$ with $T^{(n)} \in B_{v_{-i}^{(n)}}(X^{(n)}, \eta')$ such that $(\varphi_n T^{(n)})$ accumulates at $\tilde{Y}$, the limit of $(\varphi_{n} Y^{(n)})$. Considering $T'^{(n)} = \proj_{w_{m-i}^{(n)}}(T^{(n)})$ and $T''^{(n)} = \proj_{v_i^{(n)}}(T'^{(n)})$, Proposition~\ref{proposition:projections} directly shows that $(\varphi_n T''^{(n)})$ accumulates at $\tilde{Y}''$, the limit of $(\varphi_{n} Y''^{(n)})$. This is a contradiction since $(X'',Y'')$ is $\eta$-correct and $T''^{(n)} \in  B_{v_i^{(n)}}(X''^{(n)}, \eta)$ for all $n \in \N$ by $(**)$. Note that we assumed $i \in \{1,\ldots,m-1\}$ but the reasoning is obviously the same for $i \in \{m+1, \ldots,2m-1\}$.

The same construction with $E_1$ instead of $E_0$ enables us to go from $v_i$ to $v_{2-i}$. We directly conclude that all vertices are correct since $v_0$ and $v_1$ are correct and $v_{i+2} = v_{2-(-i)}$ is correct as soon as $v_i$ is. The claim stands proven.
\end{claimproof}

\medskip

From now on, we denote by $C_i$ the chamber having vertices $v_i$ and $v_{i+1}$ (indices being taken modulo $2m$).

\begin{claim}\label{claim:b-c}
After possible passage to a subsequence, there exists for each $i \in \{0, \ldots, 2m-1\}$ a sequence $b_i^{(n)} \to b_i \not \in A$ with $b_i^{(n)}$ adjacent to $v_i^{(n)}$ for all $n \in \N$ and $\varphi_n b_i^{(n)} \to \tilde{v}_{i+1}$. Similarly, there exists a sequence $c_i^{(n)} \to c_i \not \in A$ with $c_i^{(n)}$ adjacent to $v_i^{(n)}$ and $\varphi_n c_i^{(n)} \to \tilde{v}_{i-1}$.
\end{claim}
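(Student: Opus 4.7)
My plan is to produce a single ``seed'' collapse at some vertex of $A$ via Propositions \ref{proposition:centered}--\ref{proposition:centerokbetter}, and then propagate it around the apartment by alternating between projections onto opposite vertices (Proposition \ref{proposition:projections}) and applications of the Key Lemma \ref{lemma:key}.

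For the seed, I invoke Proposition \ref{proposition:centerokbetter} with the constant converging apartment $A^{(n)} \equiv A$ (note $\varphi_n A^{(n)} \to \tilde{A}$ by the case~(1) setup). After passing to a subsequence, I may replace $(a^{(n)}, b^{(n)})$ by sequences centered at some $v_{j_0}^{(n)}$ with $a^{(n)} \in A^{(n)}$; then $a^{(n)} \in \{v_{j_0 - 1}^{(n)}, v_{j_0 + 1}^{(n)}\}$, and I assume without loss of generality $a^{(n)} = v_{j_0+1}^{(n)}$ (the other case being symmetric). The collapse then forces $\varphi_n b^{(n)} \to \tilde{v}_{j_0+1}$, and since $\varphi_n v_{j_0-1}^{(n)} \to \tilde{v}_{j_0-1} \neq \tilde{v}_{j_0+1}$ and $b \neq v_{j_0+1}$, one checks that $b \not\in A$. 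This produces the $b$-version $b_{j_0}^{(n)} := b^{(n)}$ at $v_{j_0}$.

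Now I propagate. Given a $b$-version $b_i^{(n)}$ at $v_i$, the chamber $B_i^{(n)} := \{v_i^{(n)}, b_i^{(n)}\}$ is outside $A$ in the limit and its image converges to $\tilde{C}_i$. Projecting $B_i^{(n)}$ onto the opposite vertex $v_{i+m}^{(n)}$ and invoking Proposition \ref{proposition:projections}, the image converges to $\proj_{\tilde{v}_{i+m}}(\tilde{C}_i) = \tilde{C}_{i+m-1}$, so the ``other vertex'' of this projected chamber is a $c$-version $c_{i+m}^{(n)}$ at $v_{i+m}$; non-apartmentness is preserved because the bijection $\proj_{v_{i+m}^{(n)}} \colon \Cham(v_i^{(n)}) \to \Cham(v_{i+m}^{(n)})$ sends apartment chambers to apartment chambers. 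An analogous computation shows that a $c$-version at $v_i$ projects to a $b$-version at $v_{i+m}$. For the Key Lemma step, I apply Lemma \ref{lemma:key} with $b_i^{(n)}$ playing the role of $b^{(n)}$: the hypotheses are satisfied by apartment convergence, Claim \ref{claim:correct} (giving correctness of $(C_{i+m+1}, C_{i+m})$), and the construction of $b_i^{(n)}$. The conclusion yields $\eta' > 0$ such that every $R^{(n)} \in B_{v_{i+m-1}^{(n)}}(C_{i+m-2}^{(n)}, \eta')$ has $\varphi_n R^{(n)} \to \tilde{C}_{i+m-2}$; by Lemma \ref{lemma:non-isolated} I can choose such $R^{(n)}$ at a fixed distance $\delta < \min(\eta', \rho(C_{i+m-2}, C_{i+m-1})/2)$ from $C_{i+m-2}^{(n)}$, and after passing to a subsequence $R^{(n)} \to R^* \neq C_{i+m-2}, C_{i+m-1}$, so $R^* \not\in A$. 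The ``other vertex'' of $R^{(n)}$ is then a $c$-version $c_{i+m-1}^{(n)}$ at $v_{i+m-1}$. Alternating ``apply Key Lemma then project'' starting from the seed produces $b$-versions at $v_{j_0}, v_{j_0-1}, v_{j_0-2}, \ldots$ and $c$-versions at $v_{j_0+m}, v_{j_0+m-1}, v_{j_0+m-2}, \ldots$; since indices are computed modulo $2m$, after at most $2m$ iterations every vertex of $A$ is reached in both variants.

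The main obstacle I foresee is the bookkeeping of the iterative construction: each of the $O(m)$ propagation steps involves passing to a further subsequence and verifying that the newly constructed ``other vertex'' lies outside $A$. Non-apartmentness at projection steps is automatic from the apartment-preserving bijection between opposite vertex-residues, and at Key Lemma steps it follows from the choice of $\delta$ strictly less than half the (positive) distance in $\Cham(v_{i+m-1})$ between the two apartment chambers. Since only finitely many subsequences are extracted, they combine into a single one realizing the claim for all $i \in \{0, \ldots, 2m-1\}$ simultaneously.
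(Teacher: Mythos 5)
Your overall strategy is the same as the paper's: seed a collapse centered at a vertex of $A$ via Proposition~\ref{proposition:centerokbetter}, then propagate it around $A$ by alternating Lemma~\ref{lemma:key} with projections onto opposite vertices (Lemma~\ref{lemma:opposite}, Proposition~\ref{proposition:projections}). However, your justification of the crucial fact that the seed satisfies $b \notin A$ does not work. You argue that $b \neq v_{j_0-1}$ because $\varphi_n v_{j_0-1}^{(n)} \to \tilde{v}_{j_0-1} \neq \tilde{v}_{j_0+1}$ while $\varphi_n b^{(n)} \to \tilde{v}_{j_0+1}$; but $b^{(n)}$ need not coincide with $v_{j_0-1}^{(n)}$, and in the present situation $(\varphi_n)$ is precisely a witness of non-equicontinuity, so two sequences with the same limit can perfectly well have images converging to different limits. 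Nothing in your argument excludes $b = v_{j_0-1}$, and without $b \notin A$ neither the claim's conclusion for $i = j_0$ nor hypothesis of Lemma~\ref{lemma:key} (which requires $b \notin A$) is available, so the propagation cannot even start. The correct argument, which is the one the paper gives, uses correctness: if $b = v_{j_0-1}$, then the chambers with vertices $v_{j_0}^{(n)}$ and $b^{(n)}$ eventually lie in $B_{v_{j_0}^{(n)}}(C_{j_0-1}^{(n)}, \eta)$ while their images under $\varphi_n$ converge to $\tilde{C}_{j_0}$, contradicting the $\eta$-correctness of $(C_{j_0-1}, C_{j_0})$ provided by Claim~\ref{claim:correct}. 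Since you already invoke Claim~\ref{claim:correct} elsewhere, the repair is short, but as written this step fails.

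A smaller point: Lemma~\ref{lemma:non-isolated} does not let you pick $R^{(n)} \in \Cham(v_{i+m-1}^{(n)})$ ``at a fixed distance $\delta$'' from $C_{i+m-2}^{(n)}$; non-isolation yields chambers arbitrarily close to a given one, not chambers at a prescribed distance in each of the moving panels $\Cham(v_{i+m-1}^{(n)})$, so your $R^{(n)}$ could a priori converge to $C_{i+m-2} \in A$. The clean fix is the approximation device the paper uses later for $\Cham(v_0)$: fix one chamber $X \in \Cham(v_{i+m-1})$ with $X \neq C_{i+m-2}, C_{i+m-1}$ and $\rho(X, C_{i+m-2}) < \eta'$ (non-isolation in the limit panel suffices for this), and set $R^{(n)} := \proj_{v_{i+m-1}^{(n)}}(\proj_{v_{i-1}}(X))$, where $v_{i-1}$ is opposite $v_{i+m-1}^{(n)}$ for large $n$; then $R^{(n)} \to X \notin \{C_{i+m-2}, C_{i+m-1}\}$ and $R^{(n)}$ eventually lies in the $\eta'$-ball. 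With these two repairs your argument coincides with the paper's proof.
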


\begin{claimproof}
\begin{figure}
\centering
\includegraphics{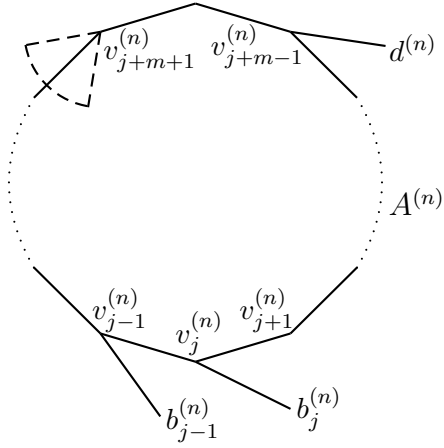}
\caption{Illustration of Claim~\ref{claim:b-c}.}\label{picture:A1}
\end{figure}
Recall that there are two sequences $a^{(n)} \to a$ and $b^{(n)} \to b$ collapsed by $(\varphi_n)$. All the hypotheses of Proposition~\ref{proposition:centerokbetter} being met, we can assume that $a^{(n)} \to a$ and $b^{(n)} \to b$ are centered at some $v_j^{(n)} \to v_j$ and that $a^{(n)} \in A^{(n)}$ for all $n \in \N$. Without loss of generality, we may assume that $a^{(n)} = v_{j+1}^{(n)}$. Clearly $b \neq v_{j+1}$, and $b \neq v_{j-1}$ since $(C_{j-1},C_j)$ is correct. The sequence $b_j^{(n)} \to b_j$ defined by $b_j^{(n)} = b^{(n)}$ satisfies the statement of the claim for $i=j$.

We now prove that one can construct $b_{j-1}^{(n)} \to b_{j-1}$ from $b_j^{(n)} \to b_j$. Since $(C_{j+m+1},C_{j+m})$ is correct, there is by Lemma~\ref{lemma:key} a sequence $d^{(n)} \to d \not \in A$ with $d^{(n)}$ adjacent to $v_{j+m-1}^{(n)}$ such that $\varphi_n d^{(n)} \to \tilde{v}_{j+m-2}$. After projection on $v_{j-1}^{(n)} \to v_{j-1}$ (see Lemma~\ref{lemma:opposite}), there is a sequence $b_{j-1}^{(n)} \to b_{j-1} \not \in A$ with $b_{j-1}^{(n)}$ adjacent to $v_{j-1}^{(n)}$ such that $\varphi_n b_{j-1}^{(n)} \to \tilde{v}_j$. By repeating this process, we get the sequences $b_i^{(n)} \to b_i$ satisfying the statement for each $i \in \{0, \ldots, 2m-1\}$. Using Lemma~\ref{lemma:opposite}, there obviously also is for each vertex $v_i$ a sequence $c_i^{(n)} \to c_i \not \in A$ with $c_i^{(n)}$ adjacent to $v_i^{(n)}$ such that $\varphi_n c_i^{(n)} \to \tilde{v}_{i-1}$.
\end{claimproof}

\medskip

Let us now focus on $\Cham(v_0)$. In the rest of this proof, we associate to each $X \in \Cham(v_0)$ a sequence $X^{(n)} \to X$ with $X^{(n)} \in \Cham(v_0^{(n)})$. Such a sequence exists, take for instance $X^{(n)} = \proj_{v_0^{(n)}}(\proj_{v_m}(X))$ ($v_m$ being opposite $v_0^{(n)}$ for sufficiently large $n$).

\begin{claim}\label{claim:chamv0}
For each $X \in \Cham(v_0)$, there exists $\eta_X > 0$ such that if $\varphi_{k(n)} X^{(k(n))} \to \tilde{X}$ for some subsequence $(\varphi_{k(n)})$ of $(\varphi_n)$ and some chamber $\tilde{X}$, then $\varphi_{k(n)} R^{(k(n))} \to \tilde{X}$ for each sequence $(R^{(n)})$ with $R^{(n)} \in B_{v_0^{(n)}}(X^{(n)}, \eta_X)$ for all $n \in \N$.
\end{claim}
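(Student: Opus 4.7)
Fix $X \in \Cham(v_0)$. My plan is to apply Lemma~\ref{lemma:key} to an apartment $A'$ through $X$, cyclically relabeled so that $X$ plays the role of ``$C_{m-2}$'' and $v_0$ the role of ``$v_{m-1}$'' of the lemma. I set $Y^{(n)} := \proj_{v_m^{(n)}}(X^{(n)})$ and let $A'^{(n)}$ be the apartment through the opposite chambers $X^{(n)}$ and $Y^{(n)}$; by Proposition~\ref{proposition:projections}, $Y^{(n)} \to Y := \proj_{v_m}(X)$ and $A'^{(n)} \to A'$ vertex-by-vertex, where $A'$ is the apartment through $X$ and $Y$. I label the vertices of $A'$ (and of $A'^{(n)}$) cyclically as $w_0, \ldots, w_{2m-1}$ with $w_{m-1} = v_0$ and $w_{m-2} = x$ (the vertex of $X$ distinct from $v_0$), so that the chamber between $w_{m-2}$ and $w_{m-1}$ is precisely $X$. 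Passing to a subsequence, I may assume $\varphi_n X^{(n)} \to \tilde{X}$, whence $\varphi_n x^{(n)} \to \tilde{x}$ and $\varphi_n Y^{(n)} \to \tilde{Y} := \proj_{\tilde{v}_m}(\tilde{X})$ by Proposition~\ref{proposition:projections}.

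I then need to verify the three hypotheses of Lemma~\ref{lemma:key} for $A'^{(n)} \to A'$ in this labeling. Hypothesis~(a), the vertex-by-vertex convergence $\varphi_n w_i^{(n)} \to \tilde{w}_i$ forming an apartment $\tilde{A}'$, is the easy one: since $\varphi_n$ is an isometry, the pairs $(\varphi_n x^{(n)}, \varphi_n v_m^{(n)})$ and $(\varphi_n v_0^{(n)}, \varphi_n y^{(n)})$ remain almost opposite, so Lemma~\ref{lemma:gallery} applied to the galleries $\varphi_n[x^{(n)}, v_m^{(n)}]$ and $\varphi_n[v_0^{(n)}, y^{(n)}]$ identifies every intermediate vertex of the image apartment, which coincides with $\tilde{A}'$, the apartment through $\tilde{X}$ and $\tilde{Y}$.

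The remaining hypotheses~(b) and~(c) are the main obstacles, since they request structural data about the new apartment $A'$ rather than about the reference apartment $A$. Hypothesis~(b) asks for correctness of the pair $(C_{m+1}, C_m)$ in the new labeling, which consists of adjacent chambers of $A'$ meeting at $w_{m+1}$ and generally lying in $A' \setminus A$; I would extend Claim~\ref{claim:correct}'s inductive step to $A'$, exploiting that $A$ and $A'$ share the opposite pair of vertices $\{v_0, v_m\}$, and propagating correctness along the cycle of $A'$ via Lemma~\ref{lemma:technical} in exact analogy with Claim~\ref{claim:correct}'s own proof. Hypothesis~(c) asks for a sequence $b^{(n)}$ adjacent to $w_0^{(n)}$ with $b \notin A'$ and $\varphi_n b^{(n)} \to \tilde{w}_1$; this I would produce by adapting Claim~\ref{claim:b-c}'s construction to $A'$, iteratively building such sequences via the opposite-projection trick of Lemma~\ref{lemma:opposite} along the cycle of $A'$, starting from the $b_i^{(n)}$ and $c_i^{(n)}$ sequences already available at vertices of $A$.

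With (a)--(c) established, Lemma~\ref{lemma:key} immediately produces some $\eta' > 0$ such that every sequence $R^{(n)} \in B_{v_0^{(n)}}(X^{(n)}, \eta')$ satisfies $\varphi_n R^{(n)} \to \tilde{X}$; setting $\eta_X := \eta'$ completes the argument. The technical heart of the proof is therefore the transfer of Claims~\ref{claim:correct} and~\ref{claim:b-c} from the reference apartment $A$ to the apartment $A'$ determined by $X$, which in general meets $A$ only along the opposite vertex pair $\{v_0, v_m\}$.
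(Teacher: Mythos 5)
Your reduction to Lemma~\ref{lemma:key} is the right idea, but the apartment you feed into it does not exist as described, and the two hypotheses you defer are exactly where the argument breaks. First, $X$ and $Y=\proj_{v_m}(X)$ are \emph{not} opposite chambers: their gallery distance is $m-1$, and the vertex $y$ of $Y$ other than $v_m$ is the last interior vertex of $[x,v_m]$. Hence their convex hull is a single minimal gallery, i.e. only half an apartment, so ``the apartment $A'$ through $X^{(n)}$ and $Y^{(n)}$'' is not well defined; and your verification of hypothesis~(a) via the galleries $[x^{(n)},v_m^{(n)}]$ and $[v_0^{(n)},y^{(n)}]$, which cover that same half, says nothing about the images of the other half of whatever completion you choose. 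This is not a formality: taking the natural completion through $C$ (so that the lemma's pair $(C_{m+1},C_m)$ becomes $(C_2,C_1)$ after relabelling), the image apartment under $\varphi_{k(n)}$ degenerates precisely when $\tilde X=\tilde C$, so (a) can genuinely fail. This is why the paper's proof runs the argument with \emph{two} apartments, $A_1$ containing $C^{(n)},X^{(n)},v_m^{(n)}$ and $A_2$ containing $D_0^{(n)},X^{(n)},v_m^{(n)}$, and sets $\eta_X=\min(\eta_1',\eta_2')$, with the cases $X=C$ and $X=D_0$ handled separately.

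Second, hypotheses (b) and (c) cannot be obtained by ``rerunning'' Claims~\ref{claim:correct} and~\ref{claim:b-c} on an apartment meeting $A$ only in $\{v_0,v_m\}$. The propagation in Claim~\ref{claim:correct} needs seed $\varepsilon$-correct pairs at two adjacent vertices together with auxiliary chambers ($E_0,E_1$) outside the apartment, and all of this comes from the defining inequalities of $J_\varepsilon$, i.e. from data attached to the original apartment $A$; likewise Claim~\ref{claim:b-c} starts from collapsed sequences placed on $A$ by Proposition~\ref{proposition:centerokbetter}. No such input exists on your $A'$, and producing it is essentially the same difficulty as the claim you are trying to prove. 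The paper's choice of $A_1$ and $A_2$ is made precisely to sidestep this: since each contains a full half of $A$, the pair required by (b) in the relabelled lemma is $(C_2,C_1)$ (respectively its analogue for $A_2$), already correct by Claim~\ref{claim:correct}, and the sequence required by (c) is obtained from $c_1^{(n)}$ of Claim~\ref{claim:b-c} by a single projection onto $w_{m+1}^{(n)}$ via Lemma~\ref{lemma:opposite}. So the ``technical heart'' you postpone is exactly the part your construction cannot supply, and the missing degeneracy analysis ($\tilde X=\tilde C$) is the reason a single auxiliary apartment cannot work.
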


\begin{claimproof}
Let $X \in \Cham(v_0)$ and first assume that $X \neq C, D_0$. Consider the sequence of apartments $A_1^{(n)} \to A_1$ where $A_1^{(n)}$ is the apartment containing $C^{(n)}$, $X^{(n)}$ and $v_m^{(n)}$, and the sequence $A_2^{(n)} \to A_2$ where $A_2^{(n)}$ is the apartment containing $D_0^{(n)}$, $X^{(n)}$ and $v_m^{(n)}$ ($X^{(n)}$ is different from $C^{(n)}$ and $D_0^{(n)}$ for sufficiently large $n$). The idea is to apply Lemma~\ref{lemma:key} to these two sequences of apartments simultaneously. We do it for $A_1^{(n)} \to A_1$. Denote by $w_{m+1}^{(n)}, \ldots, w_{2m-1}^{(n)}$ the new vertices of $A_1^{(n)}$ as in Figure~\ref{picture:A2}. The sequence $c_1^{(n)} \to c_1 \not \in A$ is such that $\varphi_n c_1^{(n)} \to \tilde{v}_0$ and, after projecting on $w_{m+1}^{(n)} \to w_{m+1}$ (see Lemma~\ref{lemma:opposite}), there also is a sequence $d^{(n)} \to d \not \in A_1$ with $d^{(n)}$ adjacent to $w_{m+1}^{(n)}$. Moreover, by Claim~\ref{claim:correct} there exists $\eta_1 > 0$ such that $(C_2,C_1)$ is $\eta_1$-correct. We can therefore apply Lemma~\ref{lemma:key} which provides $\eta'_1 > 0$ satisfying the properties stated in this same lemma. In the same way with $A_2^{(n)} \to A_2$, there is $\eta'_2 > 0$ with similar properties.

We now prove that $\eta_X = \min(\eta'_1, \eta'_2)$ is adequate. Suppose that $\varphi_{k(n)}X^{(k(n))} \to \tilde{X}$ for some subsequence $(\varphi_{k(n)})$ of $(\varphi_n)$ and some chamber $\tilde{X}$. If $\tilde{X} \neq C$, then the sequence of apartments $\varphi_{k(n)} A_1^{(k(n))}$ converges to the apartment $\tilde{A}_1$ containing $\tilde{C}$, $\tilde{X}$ and $\tilde{v}_m$. Also, we have in this case $\varphi_{k(n)} d^{(k(n))} \to \tilde{w}_{m+2}$ (the limit of $(\varphi_{k(n)} w_{m+2}^{(k(n))})$) because $\varphi_n c_1^{(n)} \to \tilde{v}_0$. This means that $(\varphi_{k(n)})$ satisfies the three conditions (a), (b) and (c) of Lemma~\ref{lemma:key} for $\eta_1$ and thus $\varphi_{k(n)} R^{(k(n))} \to \tilde{X}$ as soon as $(R^{(n)})$ is a sequence with $R^{(n)} \in B_{v_0^{(n)}}(X^{(n)}, \eta'_1)$ for all $n \in \N$. On the other hand, if $\tilde{X} = C$ then $\tilde{X} \neq D_0$ and we can do the same trick with $A_2^{(n)} \to A_2$ and $\eta'_2$. In either case, we have shown that $\eta_X = \min(\eta'_1, \eta'_2)$ is adequate.

\begin{figure}
\centering
\includegraphics{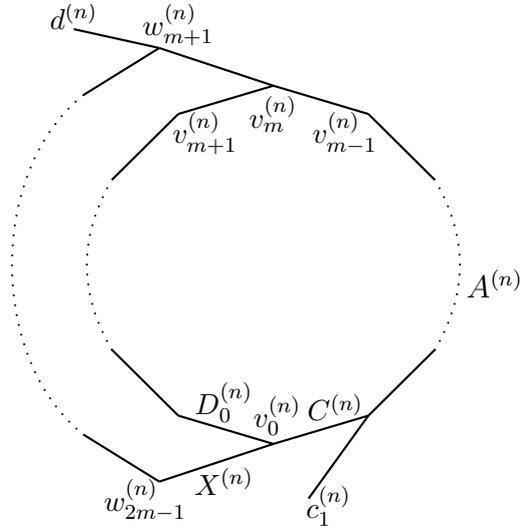}
\caption{Illustration of Claim~\ref{claim:chamv0}.}\label{picture:A2}
\end{figure}

Now if $X = C$, the convergence $\varphi_{k(n)} X^{(k(n))} \to \tilde{D}_0$ cannot happen because $(C,D_0)$ is correct. We can therefore in this case do the same reasoning with only $A_2^{(n)} \to A_2$ and take $\eta_X = \eta'_2$. Similarly, for $X = D_0$ we only consider $A_1^{(n)} \to A_1$ and take $\eta_X = \eta'_1$.
\end{claimproof}

\medskip

To complete the proof, we show that the situation described in Claim~\ref{claim:chamv0} is actually impossible. Indeed, by compactness of $\Cham(v_0)$ we get
$$\Cham(v_0) = \bigcup_{j=1}^r B_{v_0}(X_j, \eta_{X_j})$$
for some $X_1, \ldots, X_r \in \Cham(v_0)$. Passing to a subsequence, we can assume that $\varphi_n X_j^{(n)} \to \tilde{X}_j$ for each $j \in \{1, \ldots, r\}$. Now consider some $S \in \Cham(\tilde{v}_0) \setminus \{\tilde{X}_1, \ldots, \tilde{X}_r\}$ (which is non-empty since each panel is infinite). By a construction similar to the construction of $X^{(n)}$ from $X$, we can build a sequence $S^{(n)} \to S$ with $S^{(n)} \in \Cham(\varphi_n v_0^{(n)})$ for all $n \in \N$. Let $R^{(n)} = \varphi_n^{-1}S^{(n)}$; we can assume that $R^{(n)} \to R \in \Cham(v_0)$. There exists $j \in \{1, \ldots, r\}$ such that $R \in B_{v_0}(X_j, \eta_{X_j})$ and this means that $R^{(n)} \in B_{v_0^{(n)}}(X_j^{(n)}, \eta_{X_j})$ for sufficiently large~$n$. Hence, $(\varphi_n R^{(n)})$ should converge to $\tilde{X}_j$, which is impossible as $\varphi_n R^{(n)} = S^{(n)} \to S \neq \tilde{X}_j$.
\end{proof}

\begin{proof}[Proof of Corollary~\ref{corollary:Epsilon}]
Define $D_0$, $D_1$, $v_0$ and $v_1$ as in Theorem~\ref{theorem:Criterion} and take $\varepsilon > 0$ sufficiently small so that
\begin{enumerate}[(A)]
\item $\overline{B(C, \varepsilon)} \times \overline{B(C', \varepsilon)} \subseteq \{(X,X') \in (\Cham\Delta)^2 \mid X \text{ is opposite } X'\}$,
\item $\rho(C,D_i) \geq 4\varepsilon$ for $i \in \{0, 1\}$,
\item there exists $E_i \in \Cham(v_i)$ such that $\rho(C, E_i) \geq 3 \varepsilon$ and $\rho(D_i, E_i) \geq 3 \varepsilon$ for $i \in \{0,1\}$.
\end{enumerate}
It is obvious that there exists $\varepsilon > 0$ satisfying (B) and (C), and Proposition~\ref{proposition:opposite} tells us that we can choose it so that it also satisfies (A). It is then immediate to see that $L_\varepsilon(C,C')$ is a closed subset of $J_\varepsilon(C,C',\overline{B(C, \varepsilon)},\overline{B(C', \varepsilon)},E_0,E_1)$, the latter being compact by Theorem~\ref{theorem:Criterion}.
\end{proof}

\begin{proof}[Proof of Corollary~\ref{corollary:lc}]
Consider two opposite chambers $C$ and $C'$ of $\Delta$. The set $L_\varepsilon(C,C')$ is a neighbourhood of the identity in $\Auttop(\Delta)$ and is compact for some $\varepsilon > 0$ by Corollary~\ref{corollary:Epsilon}.
\end{proof}

\section{Topological characterization of the Moufang property}\label{section:characterization}

In this last section, our ultimate goal is to prove Theorem~\ref{theorem:Characterization}. The implications (i) $\Rightarrow$ (ii) and (ii) $\Rightarrow$ (iii) are explained in the first two subsections while the most difficult part (iii) $\Rightarrow$ (i) is the subject of the rest of the section.

\subsection{Convergence groups}\label{subsection:convergence}

If $M$ is a topological space and $G$ is a topological group acting continuously on $M$, then the action of $G$ on $M$ is \textbf{proper} if for every compact set $K \subseteq M$, the set $\{g \in G \mid g(K) \cap K \neq \varnothing\}$ is compact. When $M$ is compact, we also say that the action of $G$ on $M$ is \textbf{$n$-proper} if the componentwise action of $G$ on the space
$$M^{(n)} = \{(x_1, \ldots, x_n\} \in M^n \mid x_i \neq x_j \ \ \forall i \neq j\}$$
is proper, where $M^{(n)}$ is equipped with the induced topology from the product topology on $M^{n}$. We finally say that $G$ acts as a \textbf{convergence group} on $M$ if $G$ acts $3$-properly on $M$.

We will be particularly interested in subgroups of the homeomorphism group of a compact metric space $M$ that act as a convergence group on $M$.

\begin{lemma}\label{lemma:lc}
Let $(M, d)$ be a compact metric space and $G$ be a subgroup of $\Homeo(M)$ (equipped with the compact-open topology) acting as a convergence group on $M$. Then $G$ is closed in $\Homeo(M)$ and $G$ is locally compact.
\end{lemma}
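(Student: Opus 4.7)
The plan is to prove both assertions by a single argument: apply the $3$-properness hypothesis to a well-chosen compact subset of $M^{(3)}$. I will use throughout that the compact-open topology on $\Homeo(M)$ coincides with the topology of uniform convergence (since $M$ is compact metric), that $\Homeo(M)$ is then a Hausdorff topological group, and that $M^{(3)}$ is locally compact as it is open in the compact Hausdorff space $M^3$. The case $|M|<3$ is trivial (then $M^{(3)}$ is empty, $3$-properness is vacuous, and $\Homeo(M)$ itself is discrete and finite), so I would assume from the outset that $|M|\geq 3$.

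For local compactness, the plan is to exhibit a neighbourhood of $\id$ in $G$ with compact closure. Choose three pairwise distinct points $x_1,x_2,x_3\in M$ and $r>0$ such that the closed balls $\overline{B}(x_i,r)$ are pairwise disjoint; then $K:=\overline{B}(x_1,r)\times\overline{B}(x_2,r)\times\overline{B}(x_3,r)$ is a compact subset of $M^{(3)}$. Set
\[
 V:=\{g\in G\mid d(gx_i,x_i)\leq r\text{ for }i=1,2,3\}.
\]
Every $g\in V$ sends the triple $(x_1,x_2,x_3)\in K$ into $K$, so $V$ is contained in the compact set $\{g\in G\mid g(K)\cap K\neq\varnothing\}$ provided by $3$-properness. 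Moreover $V$ contains the set $V^\circ$ defined by the strict inequalities $d(gx_i,x_i)<r$, which is open in the compact-open topology and hence a neighbourhood of $\id$ in $G$. Thus $\id$ has a compact neighbourhood in $G$; since $G$ is a topological group, local compactness follows.

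For closedness, let $g$ lie in the closure of $G$ in $\Homeo(M)$ and pick a sequence $(g_n)$ in $G$ converging uniformly to $g$. Since $g$ is a homeomorphism, the three vertices $gx_1,gx_2,gx_3$ are pairwise distinct, so $(gx_1,gx_2,gx_3)$ lies in the open subset $M^{(3)}$ of $M^3$ and admits a compact neighbourhood $N\subseteq M^{(3)}$ there. Put $K:=N\cup\{(x_1,x_2,x_3)\}$, still compact in $M^{(3)}$. For $n$ sufficiently large, $(g_nx_1,g_nx_2,g_nx_3)\in N\subseteq K$, so $g_n$ belongs to the set $\{h\in G\mid h(K)\cap K\neq\varnothing\}$, which is compact by $3$-properness. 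This set, being compact in $G$, is also compact and hence closed in the Hausdorff space $\Homeo(M)$; since $g_n\to g$, we obtain $g\in G$.

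The only genuine subtlety is the choice of $K$: one must arrange that the triples witnessing the constraint and their images both land inside the same compact set, so that the proper action hypothesis can be applied. Once this is done, both statements drop out simultaneously, and no use is made of any homogeneity of $M$ beyond the existence of three distinct points.
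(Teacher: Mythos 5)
Your proof is correct and takes essentially the same route as the paper: both assertions are obtained by applying $3$-properness to a well-chosen compact subset of $M^{(3)}$ and observing that the resulting compact set of homeomorphisms contains the relevant elements (a neighbourhood of $\id$, respectively a tail of the approximating sequence). The only differences are cosmetic -- you use a product of disjoint closed balls instead of the paper's uniform-separation set $\{(y_1,y_2,y_3)\mid d(y_i,y_j)\geq\varepsilon\}$ for local compactness, and an abstract compact neighbourhood of $(gx_1,gx_2,gx_3)$ instead of explicit $\varepsilon$-balls for closedness.
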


\begin{proof}
If $M$ has less than $3$ elements, then $\Homeo(M)$ is finite and hence $G$ is locally compact and closed in $\Homeo(M)$. Now suppose that $|M| \geq 3$.

We first show that $G$ is closed in $\Homeo(M)$. Let $g_n \to g$ be a sequence with $g_n \in G$ for all $n \in \N$ and $g \in \Homeo(M)$. We want to prove that $g \in G$. Take $x_1, x_2, x_3$ three distinct elements of $M$ and consider $\varepsilon > 0$ such that $d(g(x_i),g(x_j)) > 2\varepsilon$ for each $i \neq j$. Define
$$K = \{(x_1,x_2,x_3)\} \cup (\overline{B(g(x_1), \varepsilon)} \times \overline{B(g(x_2), \varepsilon)} \times \overline{B(g(x_3), \varepsilon)}) \subseteq M^{(3)}.$$
This is clearly a compact subset of $M^{(3)}$, and $g_n \in \{h \in G \mid h(K) \cap K \neq \varnothing\}$ for sufficiently large $n$. This set being compact by hypothesis, it is closed in $G$ and $g \in G$. 

We now prove that $G$ is locally compact. Once again, take $x_1, x_2, x_3$ three distinct elements of $M$. Choose $\varepsilon > 0$ such that $d(x_i, x_j) > 3\varepsilon$ for each $i \neq j$ and define
$$K = \{(y_1, y_2, y_3) \in M^3 \mid d(y_i, y_j) \geq \varepsilon \ \ \forall i \neq j\} \subseteq M^{(3)}.$$
This is a compact subset of $M^{(3)}$ and thus $V := \{h \in G \mid h(K) \cap K \neq \varnothing\}$ is compact. But $V$ contains the ball
$$B(\id, \varepsilon) := \{h \in G \mid d(h(x), x) < \varepsilon \ \ \forall x \in M\}.$$
Indeed, if $h \in B(\id, \varepsilon)$, then $(h(x_1),h(x_2),h(x_3)) \in h(K) \cap K$. Therefore, $V$ is a compact neighbourhood of the identity of $G$.
\end{proof}

The notion of convergence group appears in the theory of trees.

\begin{lemma}\label{lemma:tree}
Let $T$ be a locally finite tree. The automorphism group $\Aut(T)$ of $T$ with the topology of pointwise convergence acts as a convergence group on the space of ends $T_\infty$ of $T$.
\end{lemma}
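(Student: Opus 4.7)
The plan is to use the fact that any three distinct ends of a tree determine a canonical \emph{median vertex}, and that this median depends locally constantly on the triple. Equivariance of the median under $\Aut(T)$ will then reduce $3$-properness on $T_\infty$ to the well-known compactness of pointwise stabilizers of vertices in a locally finite tree.

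More precisely, given $(\xi_1,\xi_2,\xi_3)\in T_\infty^{(3)}$, fix any base vertex and look at the three rays from it toward the $\xi_i$: there is a unique vertex $m(\xi_1,\xi_2,\xi_3)\in V(T)$ at which these rays branch, equivalently the unique vertex lying simultaneously on the three bi-infinite geodesics $(\xi_i\xi_j)$. This median map is $\Aut(T)$-equivariant by construction. I would then verify that $m\colon T_\infty^{(3)}\to V(T)$ is continuous when $V(T)$ carries the discrete topology. Indeed, if $v=m(\xi_1,\xi_2,\xi_3)$, the three edges leaving $v$ toward $\xi_1,\xi_2,\xi_3$ are pairwise distinct, and by the very definition of the end topology any end sufficiently close to $\xi_i$ begins, after $v$, with the same edge as $\xi_i$; hence the median is identically $v$ on a whole neighbourhood of $(\xi_1,\xi_2,\xi_3)$ in $T_\infty^{(3)}$.

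Now let $K\subseteq T_\infty^{(3)}$ be compact. By continuity of $m$ into a discrete space, $m(K)$ is a finite set $\{v_1,\dots,v_r\}\subseteq V(T)$. Equivariance gives, for every $g$ with $g(K)\cap K\neq\varnothing$, indices $i,j$ such that $g(v_i)=v_j$. Therefore
\[
S\;:=\;\{g\in\Aut(T)\mid g(K)\cap K\neq\varnothing\}\;\subseteq\;\bigcup_{i,j=1}^{r}\{g\in\Aut(T)\mid g(v_i)=v_j\},
\]
a finite union of (empty or) left cosets of the vertex stabilizers $\Stab_{\Aut(T)}(v_i)$. Because $T$ is locally finite, each $\Stab_{\Aut(T)}(v_i)$ is a closed subgroup of the compact group $\prod_{v\in V(T)}\mathrm{Sym}(E_v)$ (where $E_v$ is the finite set of edges at $v$), hence is compact in the topology of pointwise convergence; thus the right-hand side above is compact.

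It remains only to note that $S$ itself is closed in $\Aut(T)$, which follows from compactness of $K$ together with continuity of the evaluation map $\Aut(T)\times T_\infty^{(3)}\to T_\infty^{(3)}$ under pointwise convergence. A closed subset of a compact set being compact, $S$ is compact, which is exactly $3$-properness on $T_\infty$. The only genuinely geometric point is the local constancy of the median; every other step is formal, so I would not expect any real obstacle.
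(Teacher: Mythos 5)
Your argument is correct and is essentially the paper's proof: you associate to each triple of distinct ends its canonical median vertex and combine equivariance and local constancy of this map with properness of the $\Aut(T)$-action on the vertex set (i.e.\ compactness of vertex stabilizers, which uses local finiteness), exactly the two ingredients the paper invokes. One cosmetic remark: an automorphism fixing $v_i$ permutes the spheres around $v_i$ but does not fix the other vertices, so the compactness of $\Stab_{\Aut(T)}(v_i)$ is more accurately obtained from its closed embedding into $\prod_{n\ge 1}\mathrm{Sym}\bigl(S_n(v_i)\bigr)$, the product of the symmetric groups of the finite spheres centered at $v_i$, rather than into $\prod_{v}\mathrm{Sym}(E_v)$.
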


\begin{proof}
The action of $\Aut(T)$ on the set of vertices of $T$ (endowed with the discrete topology) is proper, since $T$ is locally finite. The result then follows from the fact that one can canonically associate a vertex of $T$ to each element of $T_\infty^{(3)}$.
\end{proof}

The reason why we are interested in convergence groups is the existence of the following reciprocal result, due to Carette and Dreesen.

\begin{theorem}[Carette--Dreesen]\label{theorem:Carette}
Let $G$ be a $\sigma$-compact, locally compact, non-compact group acting continuously and transitively as a convergence group on an infinite compact totally disconnected space $M$. Then $G$ acts continuously, properly and faithfully on some locally finite tree $T$ and the spaces $M$ and $T_\infty$ are equivariantly homeomorphic.
\end{theorem}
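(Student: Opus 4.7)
The task is to construct a locally finite tree $T$ carrying a proper, continuous, faithful action of $G$, together with a $G$-equivariant homeomorphism $M \cong T_\infty$. My plan would be to build $T$ from the dynamics of $G$ on $M$, in the spirit of relating convergence group actions on Cantor-like spaces to tdlc groups acting on trees.

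First, I would perform two reductions. Since $M$ is totally disconnected and $G$ acts continuously, the identity component $G^\circ$ must act trivially on $M$ (its orbits being connected subsets of $M$, hence points). Replacing $G$ by its image in $\Homeo(M)$, we may assume $G$ is totally disconnected locally compact and acts faithfully on $M$. Van Dantzig's theorem then provides a compact open subgroup $U \leq G$, and $\sigma$-compactness expresses $G$ as a countable union of $U$-double cosets, so that the orbit map $G \to M$ admits a coset description $M \cong G/\Stab_G(x_0)$ with $\Stab_G(x_0)$ closed.

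Second, I would extract strong dynamical information from the convergence property. Any non-precompact sequence $(g_n)$ in $G$ has, after passage to a subsequence, a pair of points $\xi^+, \xi^- \in M$ such that $g_n z \to \xi^+$ uniformly on compact subsets of $M \setminus \{\xi^-\}$ and $g_n^{-1} z \to \xi^-$ uniformly on compact subsets of $M \setminus \{\xi^+\}$; this ``North-South'' lemma is the standard output of $3$-properness for a non-compact convergence group, and would be proved by applying $3$-properness to carefully chosen triples. Combined with transitivity, a Baire-category argument then yields a \emph{loxodromic} element $g \in G$: an element with exactly two fixed points $\xi^\pm \in M$ and North-South dynamics between them.

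Third, I would construct the tree. Starting from a loxodromic $g$ and a clopen bipartition $M = A \sqcup (M \setminus A)$ separating $\xi^+$ from $\xi^-$, one obtains a half-space whose $G$-orbit, suitably reduced modulo an equivalence ``$A \sim B$ iff $A \triangle B$ is contained in finitely many $G$-translates of a compact set in a sense made precise by $U$'', carries a natural nested partial order. Convergence, combined with the compact-open subgroup $U$, would show that this poset of clopen half-spaces is discrete and that each element has only finitely many immediate successors. The resulting structure is a locally finite tree $T$ on which $G$ acts, and an end of $T$ corresponds to a descending chain of half-spaces whose intersection is a single point of $M$, yielding the equivariant homeomorphism $T_\infty \cong M$. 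Properness of the action on $T$ would follow from $2$-properness on $M$ (edges being encoded by ordered pairs of ends), and faithfulness from faithfulness of the reduced action on $M$.

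The hard part will be the third step: producing the right family of clopen half-spaces, verifying that the combinatorial tree is locally finite, and matching its ends with $M$ equivariantly. Local finiteness in particular requires uniform control over the number of $G$-translates of a half-space crossing a given ``cut'', which is where the interplay between the convergence property and the uniform structure coming from the compact open subgroup $U$ (and accessibility-type arguments adapted to the tdlc setting) would have to be deployed carefully.
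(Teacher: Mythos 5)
This statement is not proved in the paper at all: it is imported as a particular case of \cite{Carette}*{Theorem~D}, so the only ``proof'' the paper offers is a citation, and the natural thing for you to do here would likewise have been to invoke Carette--Dreesen. Your proposal instead tries to reprove that external theorem, and as written it has a genuine gap: the whole content of the result is concentrated in your third step, which you yourself defer. Producing a $G$-invariant family of clopen half-spaces of $M$, showing it can be organized into a nested, discrete family, verifying that the associated tree is \emph{locally finite}, and identifying its end space $G$-equivariantly with $M$ is exactly what the Carette--Dreesen argument accomplishes, and it is not a routine verification: it rests on the structure theory of totally disconnected locally compact groups (compact open subgroups and Cayley--Abels-type graphs, a reduction from $\sigma$-compact to compactly generated, and Dunwoody/Abels/Kr\"on--M\"oller-style cut and structure-tree theory allowing one to split over compact open subgroups), none of which appears in your sketch. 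Phrases such as ``convergence, combined with the compact-open subgroup $U$, would show that this poset \ldots is discrete and that each element has only finitely many immediate successors'' name the desired conclusions rather than derive them; local finiteness in particular is precisely the uniform crossing bound that you concede you do not know how to obtain, and properness of the action on $T$ is asserted to ``follow from $2$-properness on $M$'' without an argument (edges of $T$ are not pairs of ends, so some identification of edge stabilizers with compact open subgroups is needed).

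There are also smaller unsubstantiated steps. The existence of a loxodromic element is claimed via ``a Baire-category argument'' that is not given; for convergence groups this normally follows from non-elementarity of the action, which itself has to be checked from the hypotheses (non-compactness of $G$ plus transitivity on an infinite $M$). Moreover, replacing $G$ by its image in $\Homeo(M)$ silently changes the group: the theorem asserts that $G$ itself acts faithfully and properly on $T$, so you would have to show separately that the kernel of the action on $M$ is trivial (or compact and then handled), rather than discard it. The overall strategy is in the right circle of ideas -- North--South dynamics plus a tree built from clopen cuts is indeed how one expects such a theorem to go -- but the decisive construction is missing, so the proposal is an outline rather than a proof; for the purposes of this paper the correct ``proof'' is the citation of \cite{Carette}*{Theorem~D}.
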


\begin{proof}
This is a particular case of \cite{Carette}*{Theorem~D}.
\end{proof}

\subsection{Group of projectivities}\label{subsection:projectivities}

If $\pi$ and $\pi'$ are two opposite panels of a compact spherical building $\Delta$, then the projection $\proj_\pi\mid_{\pi'} : \Cham(\pi') \to \Cham(\pi)$ is a homeomorphism (see Corollary~\ref{corollary:homeo}). A map of the form
$$[\pi_0, \pi_1, \ldots, \pi_n] := \proj_{\pi_n} \circ \proj_{\pi_{n-1}} \circ \ldots \circ \proj_{\pi_1} \mid_{\pi_0}\ : \Cham(\pi_0) \to \Cham(\pi_n)$$
where $\pi_0, \pi_1, \ldots, \pi_n$ is a sequence of panels with $\pi_i$ opposite $\pi_{i-1}$ for each $i \in \{1, \ldots, n\}$ is called a \textbf{projectivity} from $\pi_0$ to $\pi_n$. The \textbf{group of projectivities} $\Pi(\pi)$ associated to a panel $\pi$ is the group of all projectivities from $\pi$ to $\pi$ (seen as a subgroup of $\Homeo(\Cham(\pi))$, so that two projectivities acting in the same way on $\Cham(\pi)$ are considered equal).

Now let $X$ be a locally finite thick affine building of rank at least~$3$ and of irreducible type. It is well known (see \cite{Grundhofer}*{Proposition~6.4}) that the spherical building $X_\infty$ at infinity of $X$ can be given a structure of totally disconnected compact building. Also, one can construct for each panel $\pi$ of $X_\infty$ a locally finite thick tree $T(\pi)$, often called \textit{panel-tree}, whose space of ends $T_\infty(\pi)$ (we should actually write $T(\pi)_\infty$) is in bijective correspondence with $\Cham(\pi)$ (see \cite{Ronan}*{Lemma~10.4} or \cite{Weiss}*{Proposition~11.22}). This bijection $\psi : T_\infty(\pi) \to \Cham(\pi)$ is a homeomorphism if we equip $T_\infty(\pi)$ with the usual topology. There also is a natural action of the group of projectivities $\Pi(\pi)$ on $T(\pi)$ and $\psi$ is equivariant under $\Pi(\pi)$. These remarks are summarized in the following proposition.

\begin{proposition}\label{proposition:equivariant}
Let $X$ be a locally finite thick irreducible affine building of rank at least~$3$. For each panel $\pi$ of $X_\infty$, there exist a locally finite thick tree $T(\pi)$ such that the group of projectivities $\Pi(\pi)$ acts on $T(\pi)$ and a homeomorphism $\psi : T_\infty(\pi) \to \Cham(\pi)$ equivariant under $\Pi(\pi)$.
\end{proposition}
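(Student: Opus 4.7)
The plan is to adopt the standard construction of the panel-tree $T(\pi)$ from~\cite{Ronan}*{Section~10} and~\cite{Weiss}*{Proposition~11.22} and then to verify the topological refinements required by the statement. I first recall the construction: the vertices of $T(\pi)$ are parallelism classes of walls of $X$ whose boundary at infinity contains $\pi$, and two such classes are joined by an edge when they admit representatives meeting along a common sector-panel with face at infinity $\pi$; this incidence structure is a tree because any two such walls either are parallel or bound a half-apartment containing such a sector-panel.

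For local finiteness and thickness, I would observe that the edges of $T(\pi)$ incident to a vertex $v$ are in canonical bijection with the chambers on a fixed side of a wall representing $v$, taken in the finite link of an appropriate sector-panel. Hence local finiteness of $T(\pi)$ follows from local finiteness of $X$, while thickness of $T(\pi)$ is a direct consequence of thickness of $X$. The bijection $\psi \colon T_\infty(\pi) \to \Cham(\pi)$ is then defined by sending an end of $T(\pi)$ to the chamber of $X_\infty$ at the ``tip'' of the nested sequence of sectors (with face $\pi$) cut out by a ray of walls representing the end; its inverse sends a chamber $C \supset \pi$ of $X_\infty$ to the end given by the walls of any sector with germ $C$ at infinity.

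The homeomorphism claim is the delicate step. I would fix a base vertex $v_0 \in T(\pi)$ and compare the canonical neighbourhood basis of an end $\xi \in T_\infty(\pi)$, formed by ends whose rays from $v_0$ share an initial segment of length $n$ with the ray of $\xi$, with a neighbourhood basis of $\psi(\xi)$ in $\Cham(\pi) \subseteq \Cham(X_\infty)$ arising from the compact building topology recalled in~\cite{Grundhofer}*{Proposition~6.4}. Two chambers of $\Cham(\pi)$ are close in that topology precisely when sectors representing them (based at a fixed vertex of $X$) agree on a long initial sequence of walls, which matches the ray condition in $T(\pi)$. Equivariance is then essentially tautological: every projection $\proj_{\pi'}\mid_\pi$ with $\pi'$ opposite $\pi$ induces, inside any apartment containing $\pi$ and $\pi'$, a canonical bijection between the walls through $\pi$ and the walls through $\pi'$, hence an isomorphism $T(\pi) \to T(\pi')$ intertwining the two maps $\psi$; composing such isomorphisms yields the action of $\Pi(\pi)$ on $T(\pi)$ under which $\psi$ is equivariant.

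The main obstacle is therefore the homeomorphism statement, since the standard references establish only a bijection $\psi$ at the set-theoretic or combinatorial level. The topology on $\Cham(\pi)$ is inherited globally from the compact building topology on $X_\infty$, while the topology on $T_\infty(\pi)$ is defined locally by agreement of rays in $T(\pi)$; bridging these descriptions requires a careful dictionary, but both encode the same intuitive notion of ``sharing a long initial portion of a sector toward $\pi$,'' and once this dictionary is in place the verification is routine.
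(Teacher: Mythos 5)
The paper handles this proposition purely by citation: the panel tree, the bijection between its ends and $\Cham(\pi)$, and the action of $\Pi(\pi)$ are imported from \cite{Ronan}*{Lemma~10.4} and \cite{Weiss}*{Proposition~11.22}, and the topology on $\Cham(\pi)$ from \cite{Grundhofer}*{Proposition~6.4}; no new argument is given. You instead try to re-derive the construction, and the model you describe is not the standard one and does not work as stated. Taking the vertices of $T(\pi)$ to be parallelism classes (bounded Hausdorff distance) of walls whose boundary at infinity contains $\pi$ is too coarse an equivalence: in a single apartment $A$ of $X$ with $\pi \in A_\infty$, all walls of $A$ having $\pi$ as an end are pairwise parallel, yet in the genuine panel tree this family produces an entire bi-infinite geodesic (an apartment of $T(\pi)$) whose two ends are the two chambers of $A_\infty$ containing $\pi$; under your equivalence the whole family collapses to one vertex, so these ends disappear and the intended end--chamber correspondence breaks down. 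The correct construction works with sector-panels with face at infinity $\pi$ modulo the strictly finer relation of containing a common sector-panel; mere parallelism (asymptoty) is not enough. Your justification that the incidence structure is a tree is also false: two walls with end $\pi$ need be neither parallel nor share a sector-panel --- take two distinct parallel walls $m, m'$ in an apartment and, using thickness, a wall $m''$ containing a ray of $m'$ toward $\pi$ but with a different second end at infinity; then $m$ and $m''$ are not at bounded Hausdorff distance and share no sector-panel.

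Independently of this, the step you yourself identify as the delicate one --- that $\psi$ is a homeomorphism --- is in the end only asserted (``routine once the dictionary is in place''). Since $T_\infty(\pi)$ and $\Cham(\pi)$ are compact Hausdorff, it would suffice to prove continuity of $\psi$ in one direction, but even that comparison between ``sharing a long initial segment of a ray in $T(\pi)$'' and the sector-convergence defining the topology of \cite{Grundhofer}*{Proposition~6.4} requires an actual argument tied to the true construction of $T(\pi)$, and none is supplied; the equivariance sketch rests on the same faulty model. So, as written, the proposal neither matches the paper's purely citational proof nor provides a self-contained substitute: the combinatorial description of $T(\pi)$ is wrong, and the topological and equivariance claims are declared rather than verified.
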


\begin{proof}
See the results mentioned in the previous discussion.
\end{proof}

We can now prove the implication (i) $\Rightarrow$ (ii) in Theorem~\ref{theorem:Characterization}.

\begin{proposition}\label{proposition:affine}
Let $X$ be a locally finite thick irreducible affine building of rank at least~$3$. For each panel $\pi$ of $X_\infty$, the closure of the group of projectivities $\Pi(\pi)$ in $\Homeo(\Cham(\pi))$ acts as a convergence group on $\Cham(\pi)$.
\end{proposition}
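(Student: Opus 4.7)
\medskip

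\noindent\textbf{Proof plan.} The plan is to reduce the statement to Lemma~\ref{lemma:tree} by transferring everything to the panel-tree $T(\pi)$ supplied by Proposition~\ref{proposition:equivariant}. Using the $\Pi(\pi)$-equivariant homeomorphism $\psi\colon T_\infty(\pi) \to \Cham(\pi)$ and the action of $\Pi(\pi)$ on $T(\pi)$, I would view $\Pi(\pi)$ simultaneously as a subgroup of $\Aut(T(\pi))$ (with the topology of pointwise convergence on vertices) and of $\Homeo(\Cham(\pi))$ (with the compact-open topology). Lemma~\ref{lemma:tree} tells us that $\Aut(T(\pi))$ acts as a convergence group on $T_\infty(\pi)$, and any closed subgroup of $\Aut(T(\pi))$ inherits this convergence property. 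The strategy is therefore to identify $\overline{\Pi(\pi)}^{\Homeo(\Cham(\pi))}$ with (a closed subgroup of) $\Aut(T(\pi))$ via $\psi$.

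The key technical step, which I expect to be the main obstacle, is to prove that the natural homomorphism
\[
\Phi\colon \Aut(T(\pi)) \longrightarrow \Homeo(\Cham(\pi))
\]
induced by $\psi$ is a topological embedding with closed image. Continuity of $\Phi$ is routine: if $g_n \to g$ pointwise on vertices of $T(\pi)$, then eventually $g_n$ and $g$ agree on every ball of vertices of finite radius around a basepoint, and because the standard topology on $T_\infty(\pi)$ has a clopen base indexed by such balls, this forces $g_n \to g$ uniformly on $T_\infty(\pi)$, hence on $\Cham(\pi)$ via $\psi$. The converse requires using the thickness of $T(\pi)$: each vertex $v$ of $T(\pi)$ is the median $m(\xi_1,\xi_2,\xi_3)$ of some triple of pairwise distinct ends (since $v$ has degree $\geq 3$). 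Hence if $\Phi(g_n) \to h$ uniformly on $T_\infty(\pi)$, then $g_n(\xi_i) \to h(\xi_i)$ for $i=1,2,3$, and since vertices are isolated in $T(\pi)$, the median construction yields $g_n(v) = m(g_n\xi_1, g_n\xi_2, g_n\xi_3)$ eventually equal to the vertex $m(h\xi_1, h\xi_2, h\xi_3)$. Defining $\tilde g(v)$ to be this common eventual value produces a tree automorphism (adjacency being preserved as a pointwise limit of adjacency-preserving maps in a discrete set) whose induced action on $T_\infty(\pi)$ is $h$; this shows both that $\Phi$ is open onto its image and that $\Phi(\Aut(T(\pi)))$ is closed in $\Homeo(\Cham(\pi))$. (Faithfulness of the action of $\Aut(T(\pi))$ on $T_\infty(\pi)$, giving injectivity of $\Phi$, follows from the same median argument.)

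Once $\Phi$ is known to be a topological embedding with closed image, the rest of the proof is immediate: the closure of $\Pi(\pi)$ in $\Homeo(\Cham(\pi))$ is contained in $\Phi(\Aut(T(\pi)))$ and, via $\Phi^{-1}$, corresponds to the closure $H$ of $\Pi(\pi)$ in $\Aut(T(\pi))$, which is a closed subgroup. By Lemma~\ref{lemma:tree}, $\Aut(T(\pi))$ acts $3$-properly on $T_\infty(\pi)$; for any compact $K \subseteq T_\infty(\pi)^{(3)}$, the set $\{g \in \Aut(T(\pi)) \mid g(K)\cap K \neq \varnothing\}$ is compact, and intersecting with the closed subgroup $H$ yields a compact subset of $H$. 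Transporting this back along $\psi$ shows that $\overline{\Pi(\pi)}^{\Homeo(\Cham(\pi))}$ acts as a convergence group on $\Cham(\pi)$, as required.
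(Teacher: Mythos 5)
Your proposal is correct and follows essentially the same route as the paper: transfer the problem to the panel-tree $T(\pi)$ via the equivariant homeomorphism of Proposition~\ref{proposition:equivariant}, note that the closure of $\Pi(\pi)$ lands inside $\Aut(T(\pi))$, and invoke Lemma~\ref{lemma:tree} together with the fact that the convergence property passes to closed subgroups. The median argument you develop merely supplies a detailed justification of the fact, asserted without proof in the paper, that $\Aut(T(\pi))$ is a closed (topologically embedded) subgroup of $\Homeo(T_\infty(\pi))$.
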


\begin{proof}
In view of Proposition~\ref{proposition:equivariant}, given a panel $\pi$ of $X_\infty$ there is a tree $T(\pi)$ such that $\Pi(\pi)$ acts on $T(\pi)$ and the spaces $T_\infty(\pi)$ and $\Cham(\pi)$ are equivariantly homeomorphic. We can therefore see $\Pi(\pi)$ as a subgroup of $\Aut(T(\pi))$, and the closure $\overline{\Pi}(\pi)$ of $\Pi(\pi)$ in $\Homeo(T_\infty(\pi))$ remains in $\Aut(T(\pi))$ since $\Aut(T(\pi))$ is closed in $\Homeo(T_\infty(\pi))$. Lemma~\ref{lemma:tree} then states that $\Aut(T(\pi))$ acts as a convergence group on $T_\infty(\pi)$. Since the restriction to a closed subgroup of an action as a convergence group is itself an action as a convergence group, this is also the case of $\overline{\Pi}(\pi)$. Recalling that $T_\infty(\pi)$ and $\Cham(\pi)$ are equivariantly homeomorphic, we see that $\overline{\Pi}(\pi)$ acts as a convergence group on $\Cham(\pi)$.
\end{proof}

\begin{corollary}[Theorem~\ref{theorem:Characterization}, (i) $\Rightarrow$ (ii)]\label{corollary:MoufangCG}
Let $\Delta$ be an infinite thick compact totally disconnected $m$-gon with $m \geq 3$. If $\Delta$ is Moufang, then for each panel $\pi$ of $\Delta$ the closure in $\Homeo(\Cham(\pi))$ of the group of projectivities $\Pi(\pi)$ acts as a convergence group on $\Cham(\pi)$.
\end{corollary}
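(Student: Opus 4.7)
The plan is to reduce the statement to Proposition~\ref{proposition:affine} by realizing $\Delta$ as the building at infinity of an affine building. More precisely, I would argue as follows.

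First, assume $\Delta$ satisfies the hypotheses and is Moufang. As discussed in the introduction, an infinite thick irreducible compact Moufang polygon is the spherical building associated with a semisimple algebraic group over a non-discrete locally compact field, by \cite{Grundhofer}*{Theorem~1.1}. Since $\Delta$ is moreover totally disconnected, the underlying field must be non-Archimedean, so the algebraic group admits an associated Bruhat--Tits building $X$; this $X$ is a locally finite thick irreducible affine building of rank~$3$, and its building at infinity $X_\infty$ is (canonically isomorphic to) $\Delta$ as a compact building. In particular, every panel $\pi$ of $\Delta$ corresponds to a panel of $X_\infty$.

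Next, for such a panel $\pi$ I would invoke Proposition~\ref{proposition:affine} applied to the affine building $X$: the closure of the group of projectivities $\Pi(\pi)$ in $\Homeo(\Cham(\pi))$ acts as a convergence group on $\Cham(\pi)$. This is exactly the conclusion sought.

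The only real content of the argument lies outside the paper, in citing \cite{Grundhofer}*{Theorem~1.1} to pass from the Moufang hypothesis to the existence of the ambient Bruhat--Tits building $X$; once this is in hand, the corollary is a direct consequence of Proposition~\ref{proposition:affine}, since the notion of convergence group, the group of projectivities, and the topology on $\Cham(\pi)$ are all intrinsic to $\Delta$ and therefore transport through the identification $\Delta \cong X_\infty$.
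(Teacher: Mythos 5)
Your proposal is correct and follows essentially the same route as the paper: invoke \cite{Grundhofer}*{Theorem~1.1} to realize $\Delta$ as the building at infinity $X_\infty$ of a locally finite thick irreducible affine building $X$ of rank~$3$, and then apply Proposition~\ref{proposition:affine}. The intermediate detour through the semisimple algebraic group and the non-Archimedean field is just a more explicit unpacking of that same citation, so there is no substantive difference.
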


\begin{proof}
By~\cite{Grundhofer}*{Theorem~1.1}, $\Delta$ is the building at infinity of some locally finite thick irreducible affine building $X$ of rank~$3$, i.e. $\Delta \cong X_\infty$. The result then follows from Proposition~\ref{proposition:affine}.
\end{proof}

To prove the implication (ii) $\Rightarrow$ (iii) in Theorem~\ref{theorem:Characterization}, we need some basic observations about groups acting on a locally finite thick tree $T$ so that the induced action on $T_\infty$ is $2$-transitive. We use a result of Tits to prove the following lemma, but the reader should be aware that it can also be shown without using such a deep result.

\begin{lemma}\label{lemma:hyperbolic}
Let $T$ be a locally finite thick tree and let $G \leq \Aut(T)$. If $G$ acts $2$-transitively on $T_\infty$, then there exists a hyperbolic element in $G$, i.e. some $g \in G$ that stabilizes a bi-infinite geodesic $\ell$ of $T$ and acts non-trivially on $\ell$ by translation.
\end{lemma}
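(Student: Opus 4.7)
The plan is to argue by contradiction: suppose that no element of $G$ is hyperbolic. Every automorphism of a tree is either elliptic (fixes a vertex or the midpoint of an edge) or hyperbolic, so the assumption means that every element of $G$ is elliptic. I would then invoke the classical theorem of Tits on groups consisting entirely of elliptic tree automorphisms, which asserts that such a group must satisfy one of the following: (a) $G$ globally fixes a vertex of $T$, (b) $G$ globally fixes the midpoint of an edge of $T$, or (c) $G$ fixes a (unique) end of $T$.

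My plan is to rule out each of these three cases using the $2$-transitivity of $G$ on $T_\infty$. Since $T$ is locally finite and thick, $T_\infty$ is infinite (in fact, a Cantor set), so $2$-transitivity is a strong constraint. Case (c) is immediate: a globally fixed end $\xi \in T_\infty$ cannot be moved by $G$, whereas $2$-transitivity requires $G$ to send $\xi$ to any other end, contradiction. For case (a), if $G$ fixes a vertex $v$, then for any pair of distinct ends $(\xi_1,\xi_2) \in T_\infty^{(2)}$, the two rays emanating from $v$ toward $\xi_1$ and $\xi_2$ share a maximal common initial segment, whose terminal vertex $w(\xi_1,\xi_2)$ is a $G$-invariant function of the pair. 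The integer $d(v, w(\xi_1,\xi_2))$ is therefore preserved by $G$, yet thickness together with local finiteness easily produces two pairs of ends realizing different values of this integer (for instance, two ends whose rays diverge already at $v$, versus two ends whose rays agree on a nontrivial initial segment), contradicting $2$-transitivity. Case (b) is handled in the same way, using the fixed edge midpoint in place of $v$.

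The key step, and the only one requiring external input, is the invocation of the Tits dichotomy for elliptic groups of tree automorphisms; once this is available the ruling-out of (a), (b), (c) is a routine geometric exercise. As the remark preceding the lemma notes, one can in principle bypass Tits's theorem by a direct construction: starting from three distinct ends and using $2$-transitivity to produce elements with prescribed boundary dynamics, one can exhibit an automorphism whose induced action on the boundary has exactly two fixed points of opposite North--South type and must therefore translate along the bi-infinite geodesic between them. That approach is more elementary but requires a delicate case analysis, so for brevity I would opt for the cleaner route through Tits's dichotomy.
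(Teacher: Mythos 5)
Your proposal is correct and follows essentially the same route as the paper: assume no hyperbolic element, invoke Tits's result (\cite{Titsarbres}, Proposition~3.4) that $G$ then stabilizes a vertex, an edge or an end, and rule out each case using $2$-transitivity on $T_\infty$. The only difference is that you spell out the elimination of the vertex and edge cases (via the $G$-invariant branch-point distance), which the paper leaves as an immediate observation.
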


\begin{proof}
Assume for a contradiction that $G$ does not contain any hyperbolic element. Then by~\cite{Titsarbres}*{Proposition~3.4}, $G$ is contained in the stabilizer of a vertex, an edge or an end of $T$. However, none of these three situations is possible since $G$ acts $2$-transitively on $T_\infty$.
\end{proof}

\begin{lemma}\label{lemma:ray}
Let $T$ be a locally finite thick tree and let $G \leq \Aut(T)$. Consider an infinite ray $r = (v_i)_{i \geq 1}$ in $T$ and let $v_0$ and $v'_0$ be two vertices of $T$ adjacent to $v_1$ and different from $v_2$. If $G$ acts $2$-transitively on $T_\infty$, then there exists $g \in G$ such that $g(r) = r$ and $g(v_0) = v'_0$.
\end{lemma}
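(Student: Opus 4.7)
Observe that any setwise automorphism of the one-ended ray $r$ must act as the identity on $r$, so $g(r)=r$ is equivalent to $g(v_i)=v_i$ for all $i\geq 1$. Hence it suffices to find $g\in G$ with $g(\xi)=\xi$, $g(v_1)=v_1$, and $g(v_0)=v_0'$, where $\xi\in T_\infty$ is the end defined by $r$ (the first two conditions automatically yield $g(v_i)=v_i$ for all $i\geq 1$, since $r$ is the unique ray from $v_1$ to $\xi$). Extend the edge $(v_0,v_1)$ to an infinite ray starting at $v_0$ and going away from $v_1$, and let $\xi_0\in T_\infty$ be its end; define $\xi_0'$ similarly using $v_0'$. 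Both extensions exist by local finiteness and thickness, and the bi-infinite geodesics $(\xi_0,\xi)$ and $(\xi_0',\xi)$ share the common tail $r$, with $v_0$ (resp.\ $v_0'$) appearing as the predecessor of $v_1$.

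By 2-transitivity of $G$ on $T_\infty$, pick $h\in G$ with $h(\xi)=\xi$ and $h(\xi_0)=\xi_0'$. Let $\sigma\colon G_\xi\to\Z$ denote the Busemann shift homomorphism at $\xi$, and put $c:=\sigma(h)$. Since $h$ sends $(\xi_0,\xi)$ to $(\xi_0',\xi)$ and the two geodesics share the tail $r$, on this tail $h$ acts as a translation by $c$. If $c=0$ then $h(v_1)=v_1$ and $h(v_0)=v_0'$, and we are done; in general I will construct a correcting element $f\in G_{\xi,\xi_0'}$ with $\sigma(f)=-c$, so that $g:=fh$ satisfies $g(\xi)=\xi$, $g(\xi_0)=\xi_0'$ and $\sigma(g)=0$. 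The vanishing shift combined with $g((\xi_0,\xi))=(\xi_0',\xi)$ then immediately forces $g(v_1)=v_1$ and $g(v_0)=v_0'$.

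To produce $f$: by Lemma~\ref{lemma:hyperbolic} there exists a hyperbolic element in $G$, and 2-transitivity applied to its axis endpoints allows us to assume after conjugation that $\xi$ is one of them. This shows $\sigma(G_\xi)\neq\{0\}$, hence $\sigma(G_\xi)=\tau_*\Z$ for some positive integer $\tau_*$. Pick $g_1\in G_\xi$ with $\sigma(g_1)=\tau_*$; such a $g_1$ is necessarily hyperbolic with axis $(\eta,\xi)$ and translation length $\tau_*$. Using 2-transitivity once more, choose $k\in G$ with $k(\xi)=\xi$ and $k(\eta)=\xi_0'$, so that $kg_1k^{-1}$ lies in $G_{\xi,\xi_0'}$ and is hyperbolic with axis $(\xi_0',\xi)$. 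Because $\sigma$ is a homomorphism into the abelian group $\Z$, $\sigma(kg_1k^{-1})=\sigma(g_1)=\tau_*$, and since $c=\sigma(h)\in\tau_*\Z$ we may write $c=n\tau_*$ and set $f:=(kg_1k^{-1})^{-n}\in G_{\xi,\xi_0'}$, which satisfies $\sigma(f)=-c$, completing the construction.

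The main obstacle is that 2-transitivity alone does not suffice to ensure $c=0$ (for example, in the Bruhat--Tits tree of $SL_2(\mathbf{Q}_p)$ all Busemann shifts are even), so the naive choice of $h$ may fail to fix $v_1$. The decisive point is that the conjugation trick above shows $\sigma(G_{\xi,\xi_0'})$ realizes the full shift group $\tau_*\Z=\sigma(G_\xi)$ in which $c$ automatically lives, so the value $-c$ needed to cancel the shift of $h$ is always in the image, and the correction can be carried out within the smaller stabilizer $G_{\xi,\xi_0'}$.
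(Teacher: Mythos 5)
Your proof is correct and follows essentially the same route as the paper: both invoke Lemma~\ref{lemma:hyperbolic} together with the $2$-transitivity of $G$ on $T_\infty$ to produce, by conjugation, a hyperbolic element translating along the geodesic through $r$ and $v_0'$, and use it to cancel the translation along the common ray. The paper composes two conjugate translations of equal length along the geodesics through $v_0$ and $v_0'$, whereas you phrase the cancellation via the Busemann shift homomorphism applied to an arbitrary end-transporting element — a repackaging of the same idea.
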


\begin{proof}
Consider a bi-infinite geodesic $\ell$ of $T$ containing $r$ and $v_0$. By Lemma~\ref{lemma:hyperbolic}, there exists a hyperbolic element in $G$, and by $2$-transitivity of $G$ there even is an element $t \in G$ stabilizing $\ell$ and acting on it non-trivially by translation. Replacing $t$ by $t^{-1}$ if necessary, we can assume that $t(v_0) = v_m$ for some $m > 0$. Now let $\ell'$ be a bi-infinite geodesic containing $r$ and $v'_0$. By $2$-transitivity of $G$, we can conjugate $t$ by an adequate element of $G$ to find $t' \in G$ stabilizing $\ell'$ and such that $t'(v'_0) = v_m$. Then $t'^{-1} t \in G$ fixes $r$ and sends $v_0$ on $v'_0$.
\end{proof}

The implication (ii) $\Rightarrow$ (iii) in Theorem~\ref{theorem:Characterization} can now be shown.

\setcounter{claim}{0}

\begin{proposition}[Theorem~\ref{theorem:Characterization}, (ii) $\Rightarrow$ (iii)]\label{proposition:proj-CG}
Let $\Delta$ be an infinite thick compact totally disconnected $m$-gon with $m \geq 3$ and let $\pi$ be a panel of $\Delta$. Suppose that the closure of the group of projectivities $\Pi(\pi)$ in $\Homeo(\Cham(\pi))$ acts as a convergence group on $\Cham(\pi)$. Then for all $G \leq \Auttop(\Delta)$, the closure of the natural image of $\Stab_G(\pi)$ in $\Homeo(\Cham(\pi))$ acts as a convergence group on $\Cham(\pi)$.
\end{proposition}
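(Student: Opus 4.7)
My plan is to first reduce to the case $G = \Auttop(\Delta)$, then exploit the fact that $\Stab_G(\pi)$ normalizes the projectivity group in $\Homeo(\Cham(\pi))$, and finally place the resulting image inside a known convergence group.

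Reduction and normalization. If $G \leq G' \leq \Auttop(\Delta)$, then $\Stab_G(\pi) \leq \Stab_{G'}(\pi)$, so the closure of the natural image of $\Stab_G(\pi)$ is a closed subgroup of the closure of the natural image of $\Stab_{G'}(\pi)$; since the $3$-properness condition defining a convergence group passes to closed subgroups (the intersection of a closed subgroup with the compact set $\{g : g(K) \cap K \neq \varnothing\} \subseteq \Homeo(\Cham(\pi))$ for compact $K \subseteq \Cham(\pi)^{(3)}$ is compact), it suffices to treat $G = \Auttop(\Delta)$. Let $H$ denote the corresponding image. Every automorphism $\varphi$ of a building satisfies the commutation $\varphi \circ \proj_\sigma = \proj_{\varphi\sigma} \circ \varphi$ for every panel $\sigma$. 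Applied to $g \in \Stab_{\Auttop(\Delta)}(\pi)$ and a projectivity $[\pi, \pi_1, \ldots, \pi_{n-1}, \pi] \in \Pi(\pi)$, this yields
\[
g|_{\Cham(\pi)} \circ [\pi, \pi_1, \ldots, \pi_{n-1}, \pi] \circ (g|_{\Cham(\pi)})^{-1} = [\pi, g(\pi_1), \ldots, g(\pi_{n-1}), \pi] \in \Pi(\pi),
\]
so $H$ normalizes $\Pi(\pi)$, and taking closures, $\overline{H}$ normalizes $\overline{\Pi(\pi)}$.

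Main step. I would then aim to establish $\overline{H} \subseteq \overline{\Pi(\pi)}$, which would immediately settle the proposition: $\overline{H}$ would be a closed subgroup of a convergence group, hence a convergence group. Concretely, for $g \in \Stab_{\Auttop(\Delta)}(\pi)$ and an opposite panel $\pi'$, the decomposition $g|_{\Cham(\pi)} = \proj_\pi|_{g\pi'} \circ g|_{\Cham(\pi')} \circ \proj_{\pi'}|_\pi$ reduces the containment to approximating the middle factor $g|_{\Cham(\pi')}: \Cham(\pi') \to \Cham(g\pi')$ by projection-composites via suitable detours through further panels. An alternative route is to invoke Theorem~\ref{theorem:Carette} on $\overline{\Pi(\pi)}$: under appropriate transitivity and $\sigma$-compactness conditions this realizes $\Cham(\pi)$ as the space of ends $T_\infty$ of a locally finite tree $T$ on which $\overline{\Pi(\pi)}$ acts continuously, properly and faithfully; the normalizing action of $\overline{H}$ on $\overline{\Pi(\pi)}$ would then extend canonically to an action on $T$, placing $\overline{H}$ inside $\Aut(T)$, which is a convergence group on $T_\infty$ by Lemma~\ref{lemma:tree}.

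Main obstacle. The difficulty lies entirely in this last step. In the direct approach, one needs a genuine density argument: showing that the intrinsically non-projectivity map $g|_{\Cham(\pi')}$ can nevertheless be approximated uniformly by projections along detours; the totally disconnected thick structure ($\Cham(v)$ has no isolated points by Lemma~\ref{lemma:non-isolated}) should give enough room for such approximation. In the Carette--Dreesen approach, one must first verify the transitivity, $\sigma$-compactness and non-compactness hypotheses for $\overline{\Pi(\pi)}$ (none of which is immediate from the bare hypothesis that $\overline{\Pi(\pi)}$ is a convergence group), and then justify that the abstract conjugation action of $\overline{H}$ on $\overline{\Pi(\pi)}$ extends to a simplicial action on the tree $T$ intertwining the boundary identifications. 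Both variants ultimately require additional building-theoretic input beyond the abstract convergence-group framework.
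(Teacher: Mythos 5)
Your reduction to $G = \Auttop(\Delta)$ and the normalization formula $g \circ [\pi, \pi_1, \ldots, \pi_{n-1}, \pi] \circ g^{-1} = [\pi, g(\pi_1), \ldots, g(\pi_{n-1}), \pi]$ are both correct and are indeed where the paper starts. However, your primary ``main step'' --- proving $\overline{H} \subseteq \overline{\Pi(\pi)}$ --- is not a viable route: that containment is false in general. Already for the compact totally disconnected projective plane at infinity of the Bruhat--Tits building of $\mathrm{PGL}_3(K)$ with $K = \mathbf{F}_q((t))$, the panel stabilizer in $\Auttop(\Delta)$ induces on $\Cham(\pi) \cong \mathbf{P}^1(K)$ semilinear maps coming from continuous field automorphisms of $K$ (e.g.\ $t \mapsto t + t^2$); these lie in $\mathrm{P\Gamma L}_2(K)$ but outside $\mathrm{PGL}_2(K)$, which is precisely the group generated by projectivities and is already closed in $\Homeo(\mathbf{P}^1(K))$. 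Such elements normalize $\overline{\Pi}(\pi)$ without belonging to it, so no approximation-by-detours argument can succeed, and the proof must pass through the normalizer, as in your second variant.

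Your second variant is exactly the paper's route, but you have only listed its difficulties rather than resolved them, and those difficulties are the substance of the proof. Concretely: (1) transitivity and non-compactness of $\overline{\Pi}(\pi)$, needed for Theorem~\ref{theorem:Carette}, come from Knarr's result that $\Pi(\pi)$ acts $2$-transitively on $\Cham(\pi)$ --- $2$-transitivity on the infinite compact space $\Cham(\pi)$ (Lemma~\ref{lemma:non-isolated}) is incompatible with equicontinuity, hence with relative compactness by Arzela--Ascoli --- while $\sigma$-compactness follows from second countability of $\Homeo(\Cham(\pi))$; one must also arrange thickness of the tree by suppressing vertices of degree at most $2$. (2) To turn the conjugation action of $\Stab_{\Auttop(\Delta)}(\pi)$ on $\overline{\Pi}(\pi)$ into an action on the tree $T(\pi)$, the paper shows the tree is encoded in the group structure: the vertex stabilizers $S_v$ are exactly the maximal compact subgroups $K$ of $\overline{\Pi}(\pi)$ with $[K : K \cap H] \geq 3$ for every other maximal compact subgroup $H$, and adjacency of $v, v'$ is detected by minimality of the index $[S_v : S_v \cap S_{v'}]$; this uses $2$-transitivity on ends and thickness of $T(\pi)$. (3) One must then verify that the boundary identification $T_\infty(\pi) \to \Cham(\pi)$ is equivariant under the panel stabilizer, which the paper does by characterizing the end of a ray $(v_i)$ through the subgroup $\bigcup_{k} \bigcap_{i \geq k} S_{v_i}$, which fixes exactly one chamber. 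None of these steps appears in your proposal, so the argument has a genuine gap precisely where the work lies.
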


\begin{proof}
By~\cite{Knarr}*{Lemma~1.2}, the action of $\Pi(\pi)$ on $\Cham(\pi)$ is $2$-transitive. Now suppose that the closure $\overline{\Pi}(\pi)$ of $\Pi(\pi)$ in $\Homeo(\Cham(\pi))$ acts as a convergence group on $\Cham(\pi)$.

\begin{claim}\label{claim:equivariant}
The group $\overline{\Pi}(\pi)$ acts on a locally finite thick tree $T(\pi)$ such that $T_\infty(\pi)$ and $\Cham(\pi)$ are equivariantly homeomorphic.
\end{claim}

\begin{claimproof}
We want to apply Theorem~\ref{theorem:Carette} and thus check its hypotheses. The group $\overline{\Pi}(\pi)$ is locally compact (Lemma~\ref{lemma:lc}), $\sigma$-compact (it is second-countable by \cite{Arens}*{Theorem~5} and a locally compact second-countable space is always $\sigma$-compact), $\Cham(\pi)$ is infinite (Lemma~\ref{lemma:non-isolated}), compact and totally disconnected, and the action of $\overline{\Pi}(\pi)$ on $\Cham(\pi)$ is continuous and transitive (even $2$-transitive). It only remains to show that $\overline{\Pi}(\pi)$ is non-compact. By contradiction, suppose that $\Pi(\pi)$ is relatively compact in $\Homeo(\Cham(\pi))$. It is therefore also relatively compact in $C(\Cham(\pi), \Cham(\pi))$ and, by the Arzela-Ascoli theorem, $\Pi(\pi)$ is equicontinuous on $\Cham(\pi)$. This is a contradiction with the fact that the action of $\Pi(\pi)$ on $\Cham(\pi)$ is $2$-transitive. By Theorem~\ref{theorem:Carette}, we therefore have a locally finite tree $T(\pi)$ as wanted. This tree may be not thick, but we can actually assume it is. Indeed, the vertices of degree~$1$ can clearly be deleted, as well as the vertices of degree~$2$ by joining their two neighbours. Note that there is no infinite ray in $T(\pi)$ only containing vertices of degree~$2$ (which would prevent the deletion of these vertices) since this would imply the existence of an isolated chamber in $\Cham(\pi)$, which is impossible in view of Lemma~\ref{lemma:non-isolated}.
\end{claimproof}

\medskip

The next claim shows that the structure of $T(\pi)$ is encoded in the group structure of $\overline{\Pi}(\pi)$. For any vertex $v$ of $T(\pi)$, we write $S_v$ for the stabilizer of $v$ in $\overline{\Pi}(\pi)$.

\begin{claim}\label{claim:encoded}
The groups $S_v$ are pairwise distinct and are exactly the maximal compact subgroups $K$ of $\overline{\Pi}(\pi)$ such that $[K : (K \cap H)] \geq 3$ for any other maximal compact subgroup $H$ of $\overline{\Pi}(\pi)$. Moreover, two distinct vertices $v$ and $v'$ of $T(\pi)$ are adjacent if and only if $[S_v : (S_v \cap S_{v'})] \leq [S_v : (S_v \cap S_w)]$ for any vertex $w$ different from $v$.
\end{claim}

\begin{claimproof}
In view of Lemma~\ref{lemma:ray}, the groups $S_v$ are pairwise distinct. Moreover, they are maximal compact subgroups of $\overline{\Pi}(\pi)$ and the only other maximal compact subgroups are the stabilizers of those edges $e$ for which there is an element of $\overline{\Pi}(\pi)$ inverting $e$ (see~\cite{HarmonicAnalysis}*{Theorem~5.2}). If $S_e$ denotes the stabilizer in $\overline{\Pi}(\pi)$ of such an edge $e = [v_1,v_2]$, then $[S_e : (S_e \cap S_{v_1})] = 2$. On the other hand, if $v$ is a vertex of $T(\pi)$ then by Lemma~\ref{lemma:ray} and since $T(\pi)$ is thick we have $[S_v : (S_v \cap S_{v'})] \geq 3$ for $v'$ a vertex different from $v$ and $[S_v : (S_v \cap S_e)] \geq 3$ for $e$ an edge of $T(\pi)$. The first part of the claim is therefore proven.

Now fix a vertex $v$ of $T$. By Lemma~\ref{lemma:ray}, for any other vertex $w$ the equality
$$[S_v : (S_v \cap S_w)] = \deg(v) \cdot \prod_{i=1}^{n-1}(\deg(v_i)-1)$$
holds, where $v, v_1, \ldots, v_{n-1}, w$ is the path in $T(\pi)$ joining $v$ and $w$ (this formula can be obtained by induction on $n$). The second part of the claim follows from this observation.
\end{claimproof}

\medskip

\begin{claim}\label{claim:actiontree}
The stabilizer $\Stab_{\Auttop(\Delta)}(\pi)$ of $\pi$ in $\Auttop(\Delta)$ acts on $T(\pi)$ and the homeomorphism between $T_\infty(\pi)$ and $\Cham(\pi)$ is equivariant under $\Stab_{\Auttop(\Delta)}(\pi)$.
\end{claim}

\begin{claimproof}
First remark that $\Stab_{\Auttop(\Delta)}(\pi)$ normalizes $\Pi(\pi)$ in $\Homeo(\Cham(\pi))$. Indeed, if $[\pi, \pi_1, \ldots, \pi_{n-1}, \pi]$ is a projectivity and $g \in \Stab_{\Auttop(\Delta)}(\pi)$, one can check that
$$g \circ [\pi, \pi_1, \ldots, \pi_{n-1}, \pi] \circ g^{-1} = [\pi, g(\pi_1), \ldots, g(\pi_{n-1}), \pi].$$
In view of Claim~\ref{claim:encoded}, the action by conjugation of $\Stab_{\Auttop(\Delta)}(\pi)$ on $\Pi(\pi)$ induces an action of $\Stab_{\Auttop(\Delta)}(\pi)$ on $T(\pi)$. It remains to show that the homeomorphism $\psi : T_\infty(\pi) \to \Cham(\pi)$ given by Claim~\ref{claim:equivariant} is equivariant under $\Stab_{\Auttop(\Delta)}(\pi)$. Consider $g \in \Stab_{\Auttop(\Delta)}(\pi)$, $C \in \Cham(\pi)$ and $(v_i)_{i \geq 1}$ a ray in $T(\pi)$ representing the end corresponding to $C$. We want to show that the ray $(g(v_i))_{i \geq 1}$ represents the end corresponding to $g(C)$. To do so, remark that
$$\bigcup_{k=1}^\infty \bigcap_{i=k}^\infty S_{v_i}$$
fixes $C$ but does not fix any chamber different from $C$ (this follows from Lemma~\ref{lemma:ray}). Now we also have
$$g\left(\bigcup_{k=1}^\infty \bigcap_{i=k}^\infty S_{v_i}\right)g^{-1} = \bigcup_{k=1}^\infty \bigcap_{i=k}^\infty S_{g(v_i)} =: R$$
by definition of the action of $\Stab_{\Auttop(\Delta)}(\pi)$ on $T(\pi)$. But $R$ fixes $g(C)$ and also fixes the chamber corresponding to the end represented by the ray $(g(v_i))_{i \geq 1}$. Since $R$ can only fix one chamber, the claim stands proven.
\end{claimproof}

\medskip

In view of Claim~\ref{claim:actiontree} and since $\Aut(T(\pi))$ is closed in $\Homeo(T_\infty(\pi))$, for any $G \leq \Auttop(\Delta)$ the closure of the image of $\Stab_G(\pi)$ in $\Homeo(\Cham(\pi))$ can be seen as a (closed) subgroup of $\Aut(T(\pi))$. The conclusion thus follows from Lemma~\ref{lemma:tree}, as the restriction to a closed subgroup of an action as a convergence group is itself an action as a convergence group.
\end{proof}

\subsection{Conventions for the rest of the section}\label{subsection:conventions}

The rest of this section is dedicated to the proof of the implication (iii) $\Rightarrow$ (i) in Theorem~\ref{theorem:Characterization}. Unless otherwise stated, we will from now on study an infinite thick compact totally disconnected $m$-gon $\Delta$ with $m \geq 3$. We will also assume that we are in presence of a strongly transitive closed subgroup $G$ of $\Auttop(\Delta)$ such that $\Delta$ satisfies the \textit{property $\CG_G$}:

\begin{definition}
Let $\Delta$ be a compact polygon and let $G$ be a subgroup of $\Auttop(\Delta)$. We will say that $\Delta$ has the \textbf{property $\CG_G$} if for each vertex (i.e. panel) $v$ of $\Delta$, the closure of the image of $\Stab_{G}(v)$ in $\Homeo(\Cham(v))$ acts as a convergence group on $\Cham(v)$.
\end{definition}

Our ultimate goal is to prove that, under these conditions, $\Delta$ has the Moufang property. We can already directly use Theorem~\ref{theorem:Carette} to construct a tree associated to each vertex of $\Delta$.

\begin{proposition}\label{proposition:tree}
Let $\Delta$ be an infinite thick compact totally disconnected $m$-gon with $m \geq 3$ and let $G$ be a strongly transitive subgroup of $\Auttop(\Delta)$ satisfying property $\CG_G$. Then for each vertex $v$ of $\Delta$ there exists a locally finite thick tree $T(v)$ such that $\Stab_{G}(v)$ acts on $T(v)$ and the spaces $T_\infty(v)$ and $\Cham(v)$ are equivariantly homeomorphic.
\end{proposition}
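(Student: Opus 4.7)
The plan is to obtain the tree $T(v)$ by applying the Carette--Dreesen theorem (Theorem~\ref{theorem:Carette}) to the closure $H_v$ of the natural image of $\Stab_G(v)$ in $\Homeo(\Cham(v))$, essentially mimicking Claim~\ref{claim:equivariant} in the proof of Proposition~\ref{proposition:proj-CG} but with $\Stab_G(v)$ in place of $\Pi(v)$. I first need to check all the hypotheses of Theorem~\ref{theorem:Carette}: property $\CG_G$ tells us that $H_v$ acts as a convergence group on $\Cham(v)$, and since $\Cham(v)$ is a compact metric space, $\Homeo(\Cham(v))$ is second countable (cf.~\cite{Arens}*{Theorem~5}); the group $H_v$, being locally compact by Lemma~\ref{lemma:lc} and second countable, is automatically $\sigma$-compact. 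The space $\Cham(v)$ is compact, totally disconnected by assumption on $\Delta$, and infinite by Lemma~\ref{lemma:non-isolated}.

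For transitivity and non-compactness of the action, I would invoke strong transitivity of $G$ together with Lemma~\ref{lemma:strongtransitivity}: choosing a panel $v'$ opposite $v$, the stabilizer $\Stab_G(v,v')$ already acts $2$-transitively on $\Cham(v)$, hence so does $\Stab_G(v)$, and a fortiori $H_v$. Transitivity is then immediate, and $2$-transitivity rules out compactness of $H_v$: if $H_v$ were compact, then the Arzela--Ascoli theorem would force $H_v$ to be equicontinuous on $\Cham(v)$, contradicting $2$-transitivity (the same argument used in Claim~\ref{claim:equivariant}).

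With all hypotheses verified, Theorem~\ref{theorem:Carette} yields a locally finite tree $T$ on which $H_v$ acts continuously, properly, and faithfully, together with an $H_v$-equivariant homeomorphism $\psi : T_\infty \to \Cham(v)$. Since $\Stab_G(v)$ maps naturally into $H_v$, this induces an action of $\Stab_G(v)$ on $T$ under which $\psi$ remains equivariant. To make $T$ thick, I would prune its vertices of degree~$1$ and then collapse vertices of degree~$2$, exactly as in Claim~\ref{claim:equivariant}; the only point to check is that no infinite ray of degree~$2$ vertices can occur, because such a ray would produce an isolated point of $T_\infty$, hence an isolated chamber in $\Cham(v)$, which is ruled out by Lemma~\ref{lemma:non-isolated}. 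The resulting tree $T(v)$ is locally finite, thick, carries an action of $\Stab_G(v)$, and satisfies $T_\infty(v) \cong \Cham(v)$ equivariantly.

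The proof is therefore largely a bookkeeping exercise; the only genuinely substantive input beyond the earlier material is the extraction of $2$-transitivity of $\Stab_G(v)$ from strong transitivity via Lemma~\ref{lemma:strongtransitivity}, which guarantees non-compactness of $H_v$. I do not foresee any serious obstacle here: the main theorem being applied (Carette--Dreesen) does all the hard work, and the thickness refinement is a standard pruning argument already carried out in this paper.
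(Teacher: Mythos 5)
Your proposal is correct and follows essentially the same route as the paper's own proof: apply Carette--Dreesen to the closure of the image of $\Stab_G(v)$ in $\Homeo(\Cham(v))$, verifying local compactness via Lemma~\ref{lemma:lc}, $\sigma$-compactness via second countability, infiniteness of $\Cham(v)$ via Lemma~\ref{lemma:non-isolated}, $2$-transitivity (hence transitivity and, by the Arzela--Ascoli equicontinuity argument, non-compactness) via Lemma~\ref{lemma:strongtransitivity}, and then pruning vertices of degree at most $2$ to obtain a thick tree. No gaps; the argument matches the paper's step for step.
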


\begin{proof}
The property $\CG_G$ tells us that the group $\overline{i(\Stab_G(v))}$ where $i$ is the natural map from $\Stab_{G}(v)$ to $\Homeo(\Cham(v))$ acts as a convergence group on $\Cham(v)$. All the hypotheses of Theorem~\ref{theorem:Carette} are then satisfied. Indeed, $\overline{i(\Stab_G(v))}$ is locally compact (Lemma~\ref{lemma:lc}) and $\sigma$-compact (it is second-countable by \cite{Arens}*{Theorem 5} and a locally compact second-countable space is always $\sigma$-compact), $\Cham(v)$ is infinite (Lemma~\ref{lemma:non-isolated}), compact and totally disconnected, and the action of $\overline{i(\Stab_G(v))}$ on $\Cham(v)$ is continuous and transitive (even $2$-transitive, see Lemma~\ref{lemma:strongtransitivity}). It only remains to show that $\overline{i(\Stab_G(v))}$ is non-compact. Suppose for a contradiction that $i(\Stab_G(v))$ is relatively compact in $\Homeo(\Cham(v))$. It is therefore also relatively compact in $C(\Cham(v), \Cham(v))$ and, by the Arzela-Ascoli theorem, $i(\Stab_G(v))$ is equicontinuous on $\Cham(v)$. This is a contradiction with the fact that the action of $i(\Stab_G(v))$ on $\Cham(v)$ is $2$-transitive.

Hence, there is a locally finite tree $T(v)$ such that $\overline{i(\Stab_G(v))}$ acts on $T(v)$ and so that $T_\infty(v)$ and $\Cham(v)$ are equivariantly homeomorphic. The group $\Stab_G(v)$ obviously acts on $T(v)$ too, and the homeomorphism remains equivariant under $\Stab_G(v)$. We can also assume that $T(v)$ is thick since the vertices of degree $\leq 2$ can be deleted (for those of degree~$2$, the two neighbours have to be joined by an edge). Note that there is no infinite ray only containing vertices of degree $2$ since it would contradict Lemma~\ref{lemma:non-isolated}.
\end{proof}

\subsection{Automorphisms fixing an apartment}

In this subsection, we focus on the fixator $H = \Fix_G(A)$ of an apartment $A$ of $\Delta$ (where $\Delta$ and $G$ are as described in Subsection~\ref{subsection:conventions}). By Proposition~\ref{proposition:tree}, for each vertex $v$ of $A$ there exists a tree $T(v)$ whose space of ends corresponds to $\Cham(v)$. The group $H$ acts simultaneously on all these trees. Moreover, for each vertex $v$ of $A$, $H$ fixes the two chambers of $A$ having vertex $v$, which means that it fixes the two corresponding ends of $T(v)$. In this tree, $H$ thus stabilizes the bi-infinite geodesic between these two ends, which we denote by $\ell_v$. We therefore have for each vertex $v$ of $A$ an induced action of $H$ on $\ell_v$.

Our goal is now to show that for every vertex $v$ of $A$, there exists some element of $H$ fixing $\ell_{v}$ and $\ell_{v'}$ pointwise where $v'$ is the vertex of $A$ opposite $v$ but acting non-trivially on every other $\ell_{w}$. We will actively use this result in the last subsection to prove that $\Delta$ is Moufang.

As a first step in this direction, we show that for each vertex $v$ of $A$, there is an element of $H$ that acts non-trivially on $\ell_v$.

\begin{lemma}\label{lemma:notfix}
Let $\Delta$ be an infinite thick compact totally disconnected $m$-gon with $m \geq 3$ and let $G$ be a strongly transitive subgroup of $\Auttop(\Delta)$ satisfying property $\CG_G$. Let $A$ be an apartment of $\Delta$, $v$ be a vertex of $A$ and $H = \Fix_G(A)$. There is an element of $H$ that acts non-trivially on $\ell_v$.
\end{lemma}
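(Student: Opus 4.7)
My plan is to argue by contradiction. Suppose every element of $H := \Fix_G(A)$ acts trivially on the axis $\ell_{v_0}$ (writing $v_0$ for $v$). Let $v_m$ be the vertex of $A$ opposite $v_0$, let $C_0, C_{-1}$ be the two chambers of $A$ containing $v_0$, set $K := \Stab_G(C_0, C_{-1})$, and let $K_0 \leq K$ denote the pointwise fixator of $\ell_{v_0}$ for the action of $K$ on $T(v_0)$.

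The first step is to exhibit a hyperbolic element $\tau \in K$ whose action on $T(v_0)$ has axis exactly $\ell_{v_0}$. By Lemma~\ref{lemma:strongtransitivity} applied to the opposite panels $v_0, v_m$, the subgroup $\Stab_G(v_0, v_m) \subseteq K$ acts $2$-transitively on $\Cham(v_0) = T_\infty(v_0)$, so its image in the convergence group supplied by property $\CG_G$ acts $2$-transitively on the ends of $T(v_0)$. Lemma~\ref{lemma:hyperbolic} provides a hyperbolic automorphism in this image; using $2$-transitivity to conjugate its axis onto the geodesic joining the ends $C_0$ and $C_{-1}$, and noting that hyperbolicity with prescribed axis is preserved by the dense subgroup (translation length being an integer-valued locally constant function), I can realise this by an actual $\tau \in \Stab_G(v_0, v_m) \subseteq K$.

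Next, I modify $\tau$ into an element of $H$. First one checks that $\tau(A) \neq A$: if $\tau(A) = A$ then $\tau \in \Stab_G(A) \cap K$, and since the two chambers of $A$ at $v_0$ are exactly $C_0, C_{-1}$ (both fixed by $\tau$), the induced action of $\tau$ on the Coxeter complex $A$ is trivial, so Tits's rigidity theorem \cite{Tits}*{Theorem~4.1.1} forces $\tau \in H$; the standing hypothesis would then make $\tau$ act trivially on $\ell_{v_0}$, contradicting hyperbolicity. Hence $\tau(A)$ is a distinct apartment in the set $\mathcal{A}$ of apartments containing $C_0$ and $C_{-1}$. By strong transitivity of $G$ there exists $\sigma \in G$ with $\sigma(\tau(A)) = A$ and $\sigma(C_0) = C_0$; because the chambers of $A$ at $v_0$ are $\{C_0, C_{-1}\}$, one automatically has $\sigma(C_{-1}) = C_{-1}$, so $\sigma \in K$. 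Applying Tits's rigidity once more to $\sigma\tau$, which stabilises $A$ and fixes $C_0$, yields $\sigma\tau \in H$.

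The main obstacle, and the last step, is to show that $\sigma\tau$ actually acts non-trivially on $\ell_{v_0}$, which contradicts the standing assumption. The induced action of $K$ on $\ell_{v_0}$ factors through the subgroup $K/K_0 \hookrightarrow \Z$, which is non-trivial since $[\tau] \neq 0$; under the assumption $H \subseteq K_0$ the translation length descends to a well-defined map $\mathcal{A} = K/H \to \Z$. I expect the contradiction to come from varying the choice of $\tau$ (e.g.\ through $G$-conjugates whose axis can be re-conjugated back to $\ell_{v_0}$, or by replacing $\tau$ by powers of minimal translation length) so as to realise sufficiently many cosets in $K/H$; combined with the fact that $\mathcal{A}$ is uncountable (by Lemma~\ref{lemma:non-isolated}) while $\Z$ is countable, and with the proper tree action provided by $\CG_G$, this should force $\sigma\tau$ to have non-zero translation length for at least one choice, producing the desired element of $H$.
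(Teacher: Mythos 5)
There is a genuine gap, and it sits exactly where you signal it yourself: the last step (``I expect the contradiction to come from varying the choice of $\tau$ \dots this should force \dots'') is a hope, not an argument. Nothing in the proposal produces the required coset of $K/H$ with non-zero translation length, and the cardinality heuristic cannot do it: a well-defined map from an uncountable set $K/H$ to the countable group of translations of $\ell_{v_0}$ is in no way contradictory. There is also a false turn earlier: your $\tau$ lies in $\Stab_G(v_0,v_m)$ and its induced automorphism of $T(v_0)$ fixes the two ends of $\ell_{v_0}$, i.e.\ $\tau$ fixes the chambers $C_0$ and $C_{-1}$ of $\Delta$ as well as the opposite vertex $v_m$. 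Since $A$ is the union of the two unique minimal galleries from $C_0$ and from $C_{-1}$ to $v_m$ (uniqueness of galleries of length $<m$, Lemma~\ref{lemma:minimal}), such a $\tau$ fixes $A$ pointwise, so the alternative ``$\tau(A)\neq A$'' on which the rest of your construction rests never occurs. Ironically, the branch you discard is the whole proof: $\tau\in H=\Fix_G(A)$ and $\tau$ translates $\ell_{v_0}$, which contradicts your standing hypothesis and finishes the lemma with no need for $\sigma$, for the coset space $\mathcal{A}=K/H$, or for any counting.

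This is in fact the paper's argument, stated directly rather than by contradiction: by Lemma~\ref{lemma:strongtransitivity} the image of $L=\Stab_G(v,v')$ in $\Aut(T(v))$ acts $2$-transitively on $T_\infty(v)$, Lemma~\ref{lemma:hyperbolic} gives a hyperbolic element of that image, conjugating inside the image moves its axis onto $\ell_v$, and the corresponding element of $L$ fixes $A$ pointwise by the convexity observation above. Note also that your density detour in Step~1 is both unnecessary and unsound as stated: the $2$-transitivity of Lemma~\ref{lemma:strongtransitivity} holds for the image of the group $\Stab_G(v_0,v_m)$ itself (not merely for its closure), so the hyperbolic element is already realized by an honest $\tau\in\Stab_G(v_0,v_m)$; and approximating a hyperbolic tree automorphism by elements of a dense subgroup preserves the translation length and a long segment of the axis, but not the two prescribed ends, so ``hyperbolicity with prescribed axis is preserved by the dense subgroup'' does not hold in the form you use it.
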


\begin{proof}
Let $v'$ be the vertex of $A$ opposite $v$ and let $L = \Stab_G(v,v')$. Since $G$ is strongly transitive, the natural image $\tilde{L}$ of $L$ in $\Aut(T(v))$ acts $2$-transitively on $T_\infty(v)$ (see Lemma~\ref{lemma:strongtransitivity}). By Lemma~\ref{lemma:hyperbolic}, there is a hyperbolic element in $\tilde{L}$ and hence an element that stabilizes $\ell_v$ but does not fix it pointwise. It corresponds to an element of $H$ that does not fix $\ell_v$ pointwise.
\end{proof}

To go further in the discussion, we denote by $v_0, \ldots, v_{2m-1}$ the vertices of $A$ in the natural order and distinguish three types of possible actions of an element $t \in H$ around a vertex $v_i$:
\begin{enumerate}[(a)]
\item If $t$ fixes $\ell_{v_i}$ pointwise, we will say that $t$ acts \textbf{trivially around} $v_i$.
\item If $t$ translates $\ell_{v_i}$ toward the end representing the chamber of $A$ having vertex $v_{i+1}$, we will say that $t$ acts \textbf{positively around} $v_i$.
\item If $t$ translates $\ell_{v_i}$ toward the end representing the chamber of $A$ having vertex $v_{i-1}$, we will say that $t$ acts \textbf{negatively around} $v_i$.
\end{enumerate}

An example of such an action of an element $t \in H$ on $A$ is shown in Figure~\ref{picture:action}. An arrow in the anticlockwise direction means that the action is positive while an arrow in the clockwise direction means that it is negative. In this figure, the action is positive around $v_0, v_1$ and $v_2$ and negative around $v_3, v_4$ and $v_5$.

The three types of action can easily be spotted by observing how $t^n$ acts on $\Delta$ when $n$ goes to infinity. Indeed, denoting by $C_{+}$ the chamber having vertices $v_i$ and $v_{i+1}$, by $C_{-}$ the chamber having vertices $v_i$ and $v_{i-1}$, and by $C$ any chamber having vertex $v_i$ different from $C_+$ and $C_-$, we get the following characterizations.
\begin{enumerate}[(a)]
\item $t$ acts trivially around $v_i$ $\Leftrightarrow$ $t^n(C)$ does not accumulate at $C_+$ nor at $C_-$.
\item $t$ acts positively around $v_i$ $\Leftrightarrow$ $t^n(C) \to C_+$.
\item $t$ acts negatively around $v_i$ $\Leftrightarrow$ $t^n(C) \to C_-$.
\end{enumerate}
\begin{figure}
\centering
\includegraphics{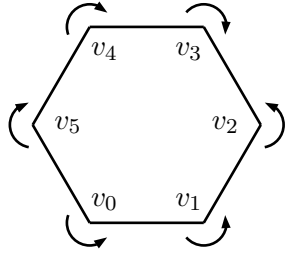}
\caption{Representation of an action.}\label{picture:action}
\end{figure}

These characterizations already allow us to prove the following lemma giving a relationship between the action of an element $t$ around $v_i$ and its action around $v_{i+m}$, the opposite vertex in $A$. The action represented in Figure~\ref{picture:action} is coherent with this information.

\begin{lemma}\label{lemma:oppositeaction}
The action of an element $t \in H$ around a vertex $v_i$ of $A$ is opposite to its action around $v_{i+m}$: $t$ either acts trivially around the two vertices, or it acts positively around one and negatively around the other.
\end{lemma}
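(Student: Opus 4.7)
The plan is to exploit the fact that the projection $\proj_{v_{i+m}} \colon \Cham(v_i) \to \Cham(v_{i+m})$ onto the opposite panel is a homeomorphism (Corollary~\ref{corollary:homeo}), and that, since $t$ fixes both $v_i$ and $v_{i+m}$, it commutes with this projection. Hence the actions of $\langle t \rangle$ on $\Cham(v_i)$ and on $\Cham(v_{i+m})$ are conjugate via a fixed homeomorphism, and all that remains is to track how this homeomorphism transports the two distinguished apartment chambers at each of the two opposite vertices.

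To this end, write $D_j$ for the chamber of $A$ with vertices $v_j$ and $v_{j+1}$ (indices mod $2m$); at $v_i$ one has $C_+^{(i)} = D_i$ and $C_-^{(i)} = D_{i-1}$, while at $v_{i+m}$ one has $C_+^{(i+m)} = D_{i+m}$ and $C_-^{(i+m)} = D_{i+m-1}$. A short distance computation in the $2m$-cycle formed by the chamber graph of $A$ (equivalently, an inspection of the minimal gallery from $v_{i+1}$ to $v_{i+m}$) yields $\proj_{v_{i+m}}(D_i) = D_{i+m-1}$ and $\proj_{v_{i+m}}(D_{i-1}) = D_{i+m}$. In other words, the projection swaps the ``$+$'' and ``$-$'' labels: $C_+^{(i)} \mapsto C_-^{(i+m)}$ and $C_-^{(i)} \mapsto C_+^{(i+m)}$.

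Now I would fix any $C \in \Cham(v_i) \setminus \{C_+^{(i)}, C_-^{(i)}\}$, which exists by Lemma~\ref{lemma:non-isolated}, and set $C^* := \proj_{v_{i+m}}(C) \in \Cham(v_{i+m}) \setminus \{C_+^{(i+m)}, C_-^{(i+m)}\}$. Since $t$ fixes $v_{i+m}$, one has $t^n(C^*) = \proj_{v_{i+m}}(t^n(C))$ for every $n$. If $t$ acts positively around $v_i$, then $t^n(C) \to C_+^{(i)}$, and continuity of projections (Proposition~\ref{proposition:projections}, applicable because $C_+^{(i)}$ is opposite $v_{i+m}$) yields $t^n(C^*) \to C_-^{(i+m)}$, so $t$ acts negatively around $v_{i+m}$; the ``negative around $v_i$'' case is symmetric. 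For the trivial case, I would argue contrapositively: if $t^n(C^*)$ accumulated at $C_{\pm}^{(i+m)}$, then applying the inverse homeomorphism $\proj_{v_i} \colon \Cham(v_{i+m}) \to \Cham(v_i)$ would force $t^n(C)$ to accumulate at $C_{\mp}^{(i)}$, contradicting the triviality around $v_i$.

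The only genuine obstacle is the combinatorial bookkeeping behind the sign-swap in the second paragraph; once that is settled, the statement follows by simply transporting the characterizations (a), (b), (c) from the excerpt through the intertwining homeomorphism.
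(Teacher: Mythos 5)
Your argument is correct and is essentially the paper's own proof: the paper likewise takes a chamber $C \notin A$ through $v_i$, uses that $t$ commutes with $\proj_{v_{i+m}}$ and invokes Proposition~\ref{proposition:projections} to transfer the dynamics to the opposite panel. You merely make explicit the bookkeeping the paper leaves implicit, namely that $\proj_{v_{i+m}}$ swaps the two apartment chambers ($D_i \mapsto D_{i+m-1}$, $D_{i-1} \mapsto D_{i+m}$) and the contrapositive treatment of the trivial case via the inverse projection.
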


\begin{proof}
Let $C \not \in A$ be a chamber having vertex $v_i$. Then $\proj_{v_{i+m}}(C)$ is a chamber having vertex $v_{i+m}$ and $t^n(\proj_{v_{i+m}}(C)) = \proj_{v_{i+m}}(t^n(C))$. By Proposition~\ref{proposition:projections}, the action of $t$ around $v_i$ is opposite to its action around $v_{i+m}$.
\end{proof}

We can now prove that $H$ always contains some particular elements.

\setcounter{claim}{0}

\begin{proposition}\label{proposition:particular}
Let $\Delta$ be an infinite thick compact totally disconnected $m$-gon with $m \geq 3$ and let $G$ be a strongly transitive subgroup of $\Auttop(\Delta)$ satisfying property $\CG_G$. Let $A$ be an apartment of $\Delta$, $v_0, \ldots, v_{2m-1}$ be the vertices of $A$ and $H = \Fix_G(A)$. For each $i \in \{0, \ldots, 2m-1\}$, there is an element of $H$ that acts positively around all vertices of $A$ closer to $v_i$ than to $v_{i+1}$ and negatively around all vertices of $A$ closer to $v_{i+1}$ than to $v_i$ (indices being taken modulo $2m$).
\end{proposition}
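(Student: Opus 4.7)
The plan is to construct the desired element of $H := \Fix_G(A)$ as a product of auxiliary elements supplied by Lemma~\ref{lemma:notfix}, controlled by an inductive dominance argument at the level of signed translation lengths on the geodesics $\ell_{v_j}$. The basic observation is that since $H$ stabilises each $\ell_{v_j}$ setwise and fixes both of its ends (those corresponding to the two chambers of $A$ at $v_j$), every $h \in H$ acts on $\ell_{v_j}$ as a translation. Choosing the ``positive'' direction on $\ell_{v_j}$ to be the one towards the end corresponding to the chamber of $A$ with vertex $v_{j+1}$, we obtain a homomorphism $\tau = (\tau_0, \dots, \tau_{2m-1}) : H \to \Z^{2m}$. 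Writing $P := \{j : v_j \text{ is closer to } v_i \text{ than to } v_{i+1}\}$ (so $|P| = m$) and $N := \{0,\dots,2m-1\}\setminus P$, the proposition is equivalent to producing $h \in H$ with $\tau_j(h) > 0$ for all $j \in P$; the required strict negativity on the $N$-coordinates is then automatic, since $j \in P \iff j+m \in N$ and Lemma~\ref{lemma:oppositeaction} forces $\tau_j(h)$ and $\tau_{j+m}(h)$ to have opposite signs.

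The construction would proceed in two stages. First, for each $j \in P$, Lemma~\ref{lemma:notfix} yields $s_j \in H$ with $\tau_j(s_j) \ne 0$, and after replacing $s_j$ by its inverse if necessary I may assume $\tau_j(s_j) > 0$; the entries $\tau_k(s_j)$ for $k \notin \{j, j+m\}$ are a priori arbitrary. Second, writing $P = \{j_1, \dots, j_m\}$ in a chosen order, I would seek $h$ of the form $s_{j_1}^{N_1} \cdots s_{j_m}^{N_m}$ with $N_1, \dots, N_m \in \N$ to be selected. Since $\tau$ is a homomorphism into an abelian group, $\tau_k(h) = \sum_{s=1}^m N_s\, \tau_k(s_{j_s})$, and at the diagonal position $k = j_s$ the summand $N_s\, \tau_{j_s}(s_{j_s})$ is strictly positive. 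Choosing the $N_s$ inductively, each large enough to dominate the contributions at its own coordinate from all other summands, would force $\tau_k(h) > 0$ at every $k \in P$ and conclude the argument.

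The obstacle I expect to be serious is that the vectors $\tau(s_{j_1}), \dots, \tau(s_{j_m})$ might be linearly dependent in $\Z^{2m}$: in that degenerate case $\tau(H)$ could fail to meet the target sign pattern, so the naive dominance argument collapses. The plan to bypass this is to refine the auxiliary elements before combining them. For a given $s_j$ and each $k \notin \{j, j+m\}$ at which $\tau_k(s_j) \ne 0$, I would form sequences of ``corrected'' products $s_j \cdot u_n$ with $u_n \in H$ chosen so that the sequence lies inside a compact subset of $\Auttop(\Delta)$ supplied by Theorem~\ref{theorem:Criterion}; extracting a convergent subsequence and using that $H$ is closed in $G$, one obtains a limit element $\tilde{s}_j \in H$ whose translation length around $v_k$ has been neutralised while the positive translation length around $v_j$ has been preserved. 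Iterating this neutralisation over all $k \notin \{j, j+m\}$ eventually produces elements $\tilde{s}_j$ whose $\tau$-vectors are supported on the single pair of coordinates $\{j, j+m\}$, after which the diagonal construction from the previous paragraph becomes elementary. The technical heart of the proof therefore lies in this extraction step, and in particular in verifying that the sequences involved actually fit into the explicit compact set $J_\varepsilon(C,C',U,U',E_0,E_1)$ furnished by Theorem~\ref{theorem:Criterion}.
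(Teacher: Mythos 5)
There is a genuine gap, and it lies precisely where you locate the ``technical heart''. First, the elementary stage already fails: choosing the exponents $N_1,\dots,N_m$ ``inductively, each large enough to dominate the contributions at its own coordinate'' cannot work, because making $N_s$ huge to win at the coordinate $j_s$ may destroy the previously secured coordinates $j_1,\dots,j_{s-1}$ (the off-diagonal entries $\tau_{j_r}(s_{j_s})$ can be negative and of arbitrary size). What positive powers of the $s_j$ generate is only the cone spanned by the vectors $\tau(s_{j_1}),\dots,\tau(s_{j_m})$, and this cone need not contain the target sign pattern even when the vectors are linearly independent (e.g.\ $(1,-5)$ and $(-5,1)$ in two coordinates); so the obstruction is not ``linear dependence'' but the shape of the cone. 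Second, the proposed repair aims at elements $\tilde{s}_j$ whose translation vector is supported on the single opposite pair $\{j,j+m\}$, and such elements generally do not exist: since the theorem being proved concludes that $\Delta$ is Moufang, any correct argument must be consistent with the model cases, and already for the building of $SL_3(\mathbb{Q}_p)$ the translation lengths of an element of $\Fix(A)$ around the $m=3$ opposite pairs of vertices are the valuations of the three positive roots evaluated on a torus element, so vanishing at any two pairs forces vanishing at the third. Finally, the extraction step gives no control: Theorem~\ref{theorem:Criterion} only provides compactness, and a limit of elements $s_ju_n$ carries no prescribed translation behaviour on the trees $T(v_k)$ (no continuity of the maps $H\to\Aut(T(v_k))$ is established, and no recipe for the $u_n$ is given), so ``neutralising'' one coordinate while ``preserving'' another is not justified.

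The missing idea is a rigidity phenomenon that runs opposite to your framework: the coordinates of $\tau$ on $H$ cannot be prescribed independently at all. The paper first produces, by taking an element acting positively around a maximal string of consecutive vertices and composing with auxiliary elements from Lemma~\ref{lemma:notfix}, some $t\in H$ acting positively around $v_s$ and negatively around $v_{s+1}$ for one index $s$; it then transports this configuration to any prescribed $i$ by conjugating with an element of the apartment stabilizer supplied by strong transitivity. The decisive step is then a purely geometric claim: \emph{any} $t\in H$ that acts positively around $v_s$ and negatively around $v_{s+1}$ automatically acts positively around every vertex closer to $v_s$ and negatively around every vertex closer to $v_{s+1}$; this is proved by following the dynamics of $t^n$ on suitable galleries and projections (Lemma~\ref{lemma:gallery} and Proposition~\ref{proposition:projections}), together with Lemma~\ref{lemma:oppositeaction}. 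In other words, the signs at two adjacent vertices determine the whole sign pattern, which is exactly why a construction trying to adjust the coordinates of $\tau$ one at a time cannot succeed.
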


\begin{proof}
We divide the proof into three claims.

\begin{claim}\label{claim:somewhere}
There exists an element of $H$ that acts positively around $v_s$ and negatively around $v_{s+1}$ for some $s \in \{0, \ldots, 2m-1\}$.
\end{claim}

\begin{claimproof}
In view of Lemma~\ref{lemma:notfix}, there is an element of $H$ acting positively around at least one vertex of $A$. We consider $t \in H$ an element acting positively around a maximal number of consecutive vertices of $A$, and denote by $v_r, v_{r+1}, \ldots, v_s$ these vertices. Note that $t$ does not act positively around all vertices of $A$ in view of Lemma~\ref{lemma:oppositeaction}. Hence, $t$ acts negatively or trivially around $v_{s+1}$. Suppose that $t$ acts trivially around $v_{s+1}$. By Lemma~\ref{lemma:notfix}, there is $t' \in H$ acting non-trivially around $v_{s+1}$. Replacing $t'$ by $t'^{-1}$ if necessary, we can assume that $t'$ acts positively around $v_{s+1}$. Then for sufficiently large $n \in \N$, $t^n t' \in H$ acts positively around $v_r, \ldots, v_s, v_{s+1}$ which contradicts the maximality of $t$. This means that $t$ acts negatively around $v_{s+1}$, and the claim stands proven.
\end{claimproof}

\medskip

\begin{claim}\label{claim:small}
For each $i \in \{0, \ldots, 2m-1\}$, there exists an element of $H$ that acts positively around $v_i$ and negatively around $v_{i+1}$.
\end{claim}

\begin{claimproof}
By Claim~\ref{claim:somewhere}, there is $t \in H$ acting positively around $v_s$ and negatively around $v_{s+1}$ for some $s \in \{0, \ldots, 2m-1\}$. Now take $i \in \{0, \ldots, 2m-1\}$. Since $G$ is strongly transitive, there is an element $g \in H$ sending the chamber with vertices $v_s$ and $v_{s+1}$ to the chamber having vertices $v_i$ and $v_{i+1}$. It can be directly checked that $gtg^{-1} \in H$ acts positively around $v_i$ and negatively around $v_{i+1}$.
\end{claimproof}

\medskip

\begin{claim}\label{claim:particular}
Let $s \in \{0, \ldots, 2m-1\}$ and let $t \in H$ be an element acting positively around $v_s$ and negatively around $v_{s+1}$. Then $t$ acts positively around all vertices of $A$ closer to $v_s$ than to $v_{s+1}$ and negatively around all vertices of $A$ closer to $v_{s+1}$ than to $v_s$.
\end{claim}

\begin{claimproof}
\begin{figure}[b!]
\centering
\includegraphics{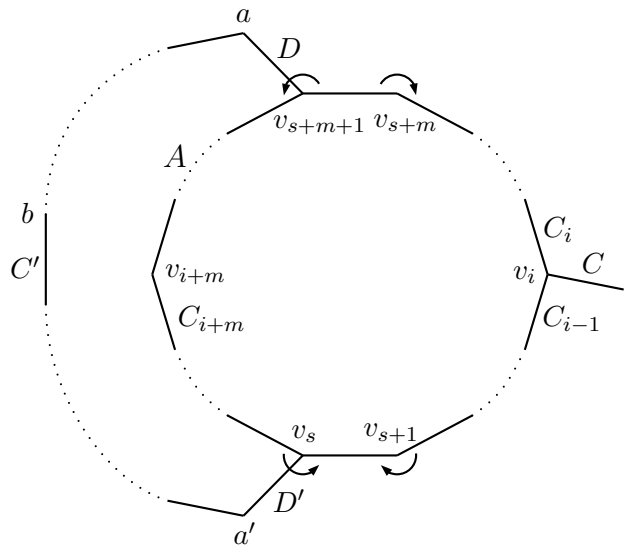}
\caption{Illustration of Claim~\ref{claim:particular}.}\label{picture:particular}
\end{figure}
By Lemma~\ref{lemma:oppositeaction}, $t$ acts negatively around $v_{s+m}$ and positively around $v_{s+m+1}$ (see Figure~\ref{picture:particular}). We now want to show that $t$ acts negatively around each vertex between $v_{s+1}$ and $v_{s+m}$ and positively around all other vertices. We therefore consider some vertex $v_i$ with $s+1 < i < s+m$ and show that the action around $v_i$ is negative (and it will follow that the action around $v_{i+m}$ is positive). Denoting by $C_j$ the chamber having vertices $v_j$ and $v_{j+1}$ for $j \in \{0, \ldots, 2m-1\}$, it suffices to find some chamber $C \not \in A$ having vertex $v_i$ and such that $t^n(C) \to C_{i-1}$. To get one, consider a chamber $D \not \in A$ having vertex $v_{s+m+1}$ and a chamber $D' \not \in A$ having vertex $v_s$. Let also $a$ (resp. $a'$) be the vertex of $D$ (resp. $D'$) different from $v_{s+m+1}$ (resp. $v_s$). The vertices $a$ and $a'$ are almost opposite while $t^n(a) \to v_{s+m+2}$ and $t^n(a') \to v_{s+1}$ with $v_{s+m+2}$ and $v_{s+1}$ almost opposite. Hence, $t^n([a, a']) \to [v_{s+m+2}, v_{s+1}]$ (Lemma~\ref{lemma:gallery}). Now consider $b$ the first vertex of $[a,a']$ opposite $v_i$ and $C'$ the chamber just after $b$ in this new gallery from $v_{s+m+1}$ to $v_s$ (see Figure~\ref{picture:particular}). Clearly $t^n(b) \to v_{i+m}$ and $t^n(C') \to C_{i+m}$. Define finally $C = \proj_{v_i}(C')$ to get $t^n(C) \to C_{i-1}$ as wanted (see Proposition~\ref{proposition:projections}). Note that $C \neq C_{i-1}$ since the vertex of $C'$ different from $b$ is opposite $v_{i-1}$.
\end{claimproof}

\medskip

The proposition follows from Claim~\ref{claim:small} and Claim~\ref{claim:particular}.
\end{proof}

The desired result can finally be shown.

\begin{proposition}\label{proposition:particular2}
Let $\Delta$ be an infinite thick compact totally disconnected $m$-gon with $m \geq 3$ and let $G$ be a strongly transitive subgroup of $\Auttop(\Delta)$ satisfying property $\CG_G$. Let $A$ be an apartment of $\Delta$, $v_0, \ldots, v_{2m-1}$ be the vertices of $A$ and $H = \Fix_G(A)$. For each $i \in \{0, \ldots, m-1\}$, there is an element of $H$ that acts trivially around $v_i$ and $v_{i+m}$ but non-trivially around all other vertices of $A$.
\end{proposition}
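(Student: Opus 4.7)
The plan is to recast everything as a simple arithmetic problem inside $\Z$. For each vertex $v_j$ of $A$, the group $H := \Fix_G(A)$ stabilizes the bi-infinite geodesic $\ell_{v_j}$ in $T(v_j)$ and fixes both of its ends (they represent the two chambers of $A$ containing $v_j$), so $H$ acts on $\ell_{v_j}$ by translations. This produces a group homomorphism
\[
n_j \colon H \to \Z
\]
recording the signed translation length, with the convention that $n_j(h) > 0$ means $h$ acts positively around $v_j$ and $n_j(h) < 0$ means negatively. In this language, Lemma~\ref{lemma:oppositeaction} reads $n_j(h) = 0 \Leftrightarrow n_{j+m}(h) = 0$, and the statement to prove reduces to exhibiting some $h \in H$ with $n_i(h) = 0$ and $n_j(h) \neq 0$ for every $j \notin \{i, i+m\}$.

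The building blocks will be the elements $t_k \in H$ provided by Proposition~\ref{proposition:particular}, which satisfy $n_j(t_k) > 0$ exactly when $v_j$ is strictly closer to $v_k$ than to $v_{k+1}$, and $n_j(t_k) < 0$ in the opposite case. In particular $n_i(t_{i-1}) < 0$ and $n_i(t_i) > 0$, so one sets
\[
\alpha := -n_i(t_{i-1}) > 0, \qquad \beta := n_i(t_i) > 0, \qquad h := t_{i-1}^{\beta}\, t_i^{\alpha}.
\]
Since the $n_j$'s are homomorphisms, $n_i(h) = \beta\, n_i(t_{i-1}) + \alpha\, n_i(t_i) = -\alpha\beta + \alpha\beta = 0$, and hence $n_{i+m}(h) = 0$ as well by Lemma~\ref{lemma:oppositeaction}.

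What remains is a sign check for every other $j$. Comparing the positive/negative ranges of $t_{i-1}$ and $t_i$ coming from Proposition~\ref{proposition:particular}, one observes that $n_j(t_{i-1})$ and $n_j(t_i)$ \emph{have the same sign} whenever $j \notin \{i, i+m\}$: both are positive for $j \in \{i-m+1, \ldots, i-1\}$ and both are negative for $j \in \{i+1, \ldots, i+m-1\}$. Combined with $\alpha, \beta > 0$, no cancellation is possible and $n_j(h) \neq 0$, which concludes the argument. The conceptually delicate point is really just the initial observation that each $n_j$ is a genuine $\Z$-valued homomorphism (so that the linear combination argument makes sense); once this is in place, all the substance has been packaged into Proposition~\ref{proposition:particular}, and what is left is bookkeeping on indices.
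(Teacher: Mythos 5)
Your argument is correct and is essentially the paper's own proof: the element $h = t_{i-1}^{\beta}t_i^{\alpha}$ is exactly the paper's $t = t_1^{T_2}t_2^{T_1}$ built from the two elements of Proposition~\ref{proposition:particular}, with triviality at $v_i$ forced by cancelling translation lengths, triviality at $v_{i+m}$ from Lemma~\ref{lemma:oppositeaction}, and non-triviality elsewhere from the agreement of signs. You merely make explicit (via the homomorphisms $n_j$ and the index bookkeeping) what the paper dismisses with ``clearly''.
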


\begin{proof}
Fix $i \in  \{0,\ldots,m-1\}$. By Proposition~\ref{proposition:particular}, there exists $t_1 \in H$ acting positively around all vertices of $A$ closer to $v_{i-1}$ than to $v_i$ and negatively around all vertices of $A$ closer to $v_i$ than to $v_{i-1}$. There also exists $t_2 \in H$ acting positively around all vertices of $A$ closer to $v_i$ than to $v_{i+1}$ and negatively around all vertices of $A$ closer to $v_{i+1}$ than to $v_i$. Denote by $T_1 \in \N^*$ (resp. $T_2 \in \N^*$) the length of the translation of $\ell_{v_i}$ induced by $t_1$ (resp. $t_2$). Now consider the element $t = t_1^{T_2} t_2^{T_1} \in H$. Clearly, $t$ acts trivially around $v_i$ (and hence around $v_{i+m}$) and non-trivially around all other vertices of $A$.
\end{proof}

\subsection{Proof of the characterization of the Moufang property}

The proof of implication (iii) $\Rightarrow$ (i) in Theorem~\ref{theorem:Characterization} will use the theory of contraction groups. If $G$ is a topological group and $t \in G$, the \textbf{contraction group} associated to $t$ is the subgroup of $G$ defined by
$$\Con_G(t) = \{g \in G \mid t^n g t^{-n} \to e\}$$
where $e$ is the identity element of $G$. The \textbf{parabolic subgroup} associated to $t$ is the subgroup of $G$ defined by
$$\Par_G(t) = \{g \in G \mid \{t^n g t^{-n}\}_{n \in \N} \text{ is relatively compact in } G\}.$$

The following lemma is immediate.

\begin{lemma}\label{lemma:inclusion}
Let $f : G \to H$ be a continuous homomorphism of topological groups and let $t \in G$. We have the following inclusions.
$$f(\Con_G(t)) \subseteq \Con_H(f(t)) \ \ \text{ and } \ \ f(\Par_G(t)) \subseteq \Par_H(f(t))$$
\end{lemma}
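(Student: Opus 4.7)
The plan is to verify both inclusions directly from the two defining properties of $f$: being a group homomorphism (so conjugation is respected) and being continuous (so it preserves convergence to the identity and sends relatively compact sets to relatively compact sets).

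First I would handle the contraction group. Given $g \in \Con_G(t)$, we have $t^n g t^{-n} \to e_G$. Applying $f$ and using the homomorphism property,
\[
f(t)^n f(g) f(t)^{-n} = f(t^n g t^{-n}) \xrightarrow{n\to\infty} f(e_G) = e_H,
\]
where the convergence step uses continuity of $f$ at $e_G$. Hence $f(g) \in \Con_H(f(t))$, giving $f(\Con_G(t)) \subseteq \Con_H(f(t))$.

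For the parabolic subgroup, let $g \in \Par_G(t)$, so the set $S = \{t^n g t^{-n} \mid n \in \mathbf{N}\}$ has compact closure $\overline{S}$ in $G$. Since $f$ is continuous, $f(\overline{S})$ is compact in $H$, and in particular closed (we are tacitly in Hausdorff topological groups). Therefore
\[
\overline{f(S)} \subseteq f(\overline{S}),
\]
and $\overline{f(S)}$ is a closed subset of a compact set, hence compact. But $f(S) = \{f(t)^n f(g) f(t)^{-n} \mid n \in \mathbf{N}\}$ by the homomorphism property, so this set is relatively compact in $H$, i.e.\ $f(g) \in \Par_H(f(t))$.

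There is no real obstacle here: the statement is essentially the functoriality of the constructions $\Con$ and $\Par$ under continuous homomorphisms, and the only subtlety worth pointing out is the routine fact that continuous images of relatively compact sets are relatively compact, which itself follows from the continuous image of the (compact) closure being compact and hence closed.
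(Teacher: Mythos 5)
Your proof is correct and follows essentially the same route as the paper, which simply notes that continuity gives the contraction-group inclusion and that the continuous image of a compact set is compact for the parabolic one; you have merely spelled out the (routine) details, including the Hausdorff point for relative compactness.
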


\begin{proof}
Using the continuity of $f$, we directly get $f(\Con_G(t)) \subseteq \Con_H(f(t))$. Similarly, the inclusion $f(\Par_G(t)) \subseteq \Par_H(f(t))$ follows from the fact that a continuous image of a compact set is compact.
\end{proof}

In the case where $G$ is the automorphism group of a tree, we can compute the contraction groups and the parabolic subgroups associated to some particular elements.

\begin{lemma}\label{lemma:treegroups}
Let $T$ be a locally finite tree and let $t$ be an element of $\Aut(T)$ stabilizing a bi-infinite geodesic $(a,b)$ in $T$ (where $a, b \in T_\infty$).
\begin{enumerate}[(i)]
\item If $t$ fixes $(a, b)$ pointwise, then $\Con_{\Aut(T)}(t) = \{e\}$.
\item If $t$ translates $(a, b)$ toward $b$, then $\Par_{\Aut(T)}(t) = \Fix_{\Aut(T)}(a)$.
\end{enumerate}
\end{lemma}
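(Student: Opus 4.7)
My plan for (i) is to take any $g \in \Con_{\Aut(T)}(t)$ and show that $g$ fixes every vertex of $T$. In the pointwise-convergence topology, $t^n g t^{-n} \to e$ is equivalent to: for every vertex $v$ there exists $N$ with $g(t^{-n}(v)) = t^{-n}(v)$ whenever $n \geq N$. If $v$ lies on $(a,b)$ this immediately gives $g(v) = v$, since $t^{-n}(v) = v$ by hypothesis. For $v$ off the axis, my key observation is that the $\langle t \rangle$-orbit of $v$ is finite: as $t$ fixes $(a,b)$ pointwise and is an isometry, it preserves both the projection $p(u)$ of a vertex onto the axis and the distance $d(u, p(u))$, so it permutes the finite set $\{u \in T : p(u) = p(v),\ d(u, p(v)) = d(v, p(v))\}$. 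Therefore $(t^{-n}(v))_{n \geq 0}$ is periodic, and the fact that $g$ fixes a tail of this periodic sequence forces $g$ to fix every vertex of the orbit, including $v$.

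For (ii), I would rely on the standard fact that $S \subseteq \Aut(T)$ with the pointwise-convergence topology is relatively compact if and only if $\{s(v) : s \in S\}$ is finite (equivalently, bounded) for every vertex $v$, and prove the two inclusions separately.

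To show $\Fix_{\Aut(T)}(a) \subseteq \Par_{\Aut(T)}(t)$, take $g$ fixing $a$, any vertex $v$, and set $w_n := t^{-n}(v)$, so that $d(v, t^n g t^{-n}(v)) = d(w_n, g w_n)$. Pick $v_0$ on the axis, enumerate $[v_0, a) = (u_i)_{i \geq 0}$, and observe that the rays $[v_0, a)$ and $[g v_0, a)$ share an infinite tail since both represent $a$; the isometry $g$ matches them by an integer shift, giving $c \in \Z$ with $g(u_i) = u_{i+c}$ for all large $i$. Writing $d := d(v, p(v))$, the projection of $w_n$ onto the axis is $t^{-n}(p(v))$ (since $t$ stabilises the axis), and this point eventually lies on the tail of $[v_0, a)$, so $d(p(w_n), g(p(w_n))) = |c|$ for large $n$. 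A triangle inequality along $w_n, p(w_n), g(p(w_n)), g(w_n)$, using that $g$ is an isometry to control the two outer legs of length $d$, yields $d(w_n, g w_n) \leq 2d + |c|$ uniformly in $n$. Hence $\{t^n g t^{-n}(v) : n \in \N\}$ is bounded and so finite, establishing relative compactness.

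For the reverse inclusion $\Par_{\Aut(T)}(t) \subseteq \Fix_{\Aut(T)}(a)$, take $g \in \Par_{\Aut(T)}(t)$ and $v_0$ on the axis; relative compactness at $v_0$ provides $R$ with $d(v_0, t^n g t^{-n}(v_0)) \leq R$ for every $n$, which after applying the isometry $t^{-n}$ rewrites as $d(w_n, g w_n) \leq R$ with $w_n = t^{-n}(v_0)$. Since $t$ translates the axis toward $b$, $w_n \to a$ in the end compactification $T \cup T_\infty$, and the uniform distance bound then forces $g(w_n) \to a$ as well. But any automorphism of $T$ extends to a homeomorphism of $T \cup T_\infty$, so continuity gives $g(w_n) \to g(a)$; hence $g(a) = a$. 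I expect the only mildly delicate step to be the shift-by-$c$ structure for end-fixing tree automorphisms that powers the forward inclusion of (ii); the rest is routine tree geometry and the standard extension of tree automorphisms to the end compactification.
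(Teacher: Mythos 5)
Your proof is correct, but it is genuinely more self-contained than the paper's, whose ``proof'' is essentially a pointer: part (i) is disposed of by remarking that $\{t^n\}_{n \in \mathbf{N}}$ is relatively compact in $\Aut(T)$ when $t$ fixes the axis pointwise (an element lying in a compact subgroup has trivial contraction group, by passing to a convergent subnet $t^{n_k} \to s$ and getting $sgs^{-1}=e$), and part (ii) is delegated to Caprace--De Medts (Lemma~2.3 of their paper on trees, contraction groups and Moufang sets). Your argument for (i) is a pointwise, orbit-level version of the same compactness observation: finiteness of each $\langle t\rangle$-orbit (preservation of the projection to the axis plus local finiteness) makes $(t^{-n}(v))_n$ periodic, so fixing a tail forces $g(v)=v$; this is correct and elementary. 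Your argument for (ii) is a correct direct proof of the cited lemma: the inclusion $\Fix_{\Aut(T)}(a) \subseteq \Par_{\Aut(T)}(t)$ via the shift-by-$c$ action of an end-fixing automorphism on the tail of a ray toward $a$, combined with the standard criterion that a subset of $\Aut(T)$ (pointwise-convergence topology, $T$ locally finite, so vertex stabilizers are compact open) is relatively compact if and only if its orbits are finite; and the reverse inclusion via the extension of $g$ to the end compactification and the fact that a uniformly bounded perturbation of a sequence converging to $a$ still converges to $a$. Two harmless nitpicks: your bound $d(w_n, g w_n) \le 2d + |c|$ is only established for large $n$ (the finitely many remaining terms are of course still bounded, so finiteness of the orbit is unaffected), and the ``standard fact'' you invoke is exactly where local finiteness enters (compactness of vertex stabilizers), so it deserves the explicit mention you give it. What the paper's route buys is brevity and a conceptual one-liner for (i); what yours buys is a complete argument independent of the external reference.
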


\begin{proof}
Point (i) directly comes from the fact that $\{t^n\}_{n \in \N}$ is relatively compact in $\Aut(T)$ when $t$ fixes $(a,b)$ pointwise. The proof of (ii) is given in~\cite{Caprace}*{Lemma~2.3}.
\end{proof}

Finally, the following theorem is due to Baumgartner and Willis.

\begin{theorem}[Baumgartner--Willis]\label{theorem:Baumgartner}
Let $G$ be a totally disconnected locally compact group and let $t \in G$. Then
$$\Par_G(t) = \Con_G(t) \cdot (\Par_G(t) \cap \Par_G(t^{-1})).$$
\end{theorem}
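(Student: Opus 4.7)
The inclusion $\Con_G(t) \cdot M \subseteq \Par_G(t)$, where $M := \Par_G(t) \cap \Par_G(t^{-1})$, is immediate from the definitions: every convergent sequence is bounded, so $\Con_G(t) \subseteq \Par_G(t)$, and $M \subseteq \Par_G(t)$ tautologically. All of the substance of the theorem lies in the reverse inclusion $\Par_G(t) \subseteq \Con_G(t) \cdot M$, so I would concentrate the proof there.

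My plan is to invoke Willis's theory of tidy subgroups in tdlc groups. Pick a compact open subgroup $V \leq G$ that is tidy for $t$: this means $V$ factors as $V = V_+ V_-$, where $V_\pm := \bigcap_{n \geq 0} t^{\pm n} V t^{\mp n}$ satisfy $t V_+ t^{-1} \supseteq V_+$ and $t V_- t^{-1} \subseteq V_-$, together with the auxiliary tidiness axioms. One first observes that $V_-$ sits inside $M$, since the orbits of its elements remain in $V$ under conjugation by both $t$ and $t^{-1}$; dually, the ascending union $\bigcup_{n \geq 0} t^n V_+ t^{-n}$ lies in $\Con_G(t)$, because elements of $V_+$ are pushed by $t^{-n}$ into progressively smaller pieces of $V$ that, by tidiness, collapse to $\{e\}$.

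For a general $g \in \Par_G(t)$, the positive orbit $\{t^n g t^{-n} : n \geq 0\}$ is relatively compact and so meets only finitely many left cosets of $V$. The plan is then to use the tidy factorization to split each $t^n g t^{-n}$ into a $V_+$-component and a $V_-$-component: a compactness/diagonal argument on the $V_-$-side produces, after conjugating back, a limit element $m \in M$; the residue $c := g m^{-1}$ should then have the property that its $t$-conjugates collapse into the $V_+$-direction and hence contract to the identity, placing $c \in \Con_G(t)$ and yielding the decomposition $g = cm$.

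The main obstacle, to my mind, is precisely this last step. A naive cluster-point argument in the compact closure of the orbit already produces a candidate $h \in M$, but one needs $(t^n g t^{-n}) \cdot (t^n h t^{-n})^{-1} \to e$ along the \emph{whole} sequence (and not merely along a subsequence) in order to conclude $g h^{-1} \in \Con_G(t)$. It is precisely the rigid algebraic structure of the tidy decomposition $V = V_+ V_-$ that upgrades subsequential convergence to full convergence, and without invoking Willis's structure theorem I see no way of carrying this step through.
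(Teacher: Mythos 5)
Your proposal does not match what the paper does: the paper does not reprove this statement at all, its ``proof'' being the citation \cite{Baumgartner}*{Corollary~3.17}, so the benchmark is either that citation or a complete account of the tidy-subgroup argument. Your sketch is neither, and the two structural ``observations'' it rests on are false. With your conventions $V_{\pm}=\bigcap_{n\ge 0}t^{\pm n}Vt^{\mp n}$, the inclusion $V_-\subseteq M$ fails in general: one only gets $V_-\subseteq\Par_G(t)$, because conjugation by $t^{-1}$ typically expands $V_-$. For instance, in $G=\mathbb{Q}_p\rtimes\langle t\rangle$ with $t$ acting by multiplication by $p$ and the tidy subgroup $V=\mathbb{Z}_p$, one has $V_-=\mathbb{Z}_p$ while $\Par_G(t^{-1})\cap\mathbb{Q}_p=\{0\}$. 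Likewise $\bigcup_{n\ge 0}t^nV_+t^{-n}\subseteq\Con_G(t)$ is wrong: conjugation by $t$ \emph{expands} $V_+$, and even in the contracting direction the groups $t^{-n}V_+t^{n}$ only decrease to $V_0=V_+\cap V_-=\bigcap_{k\in\mathbb{Z}}t^kVt^{-k}$, which is usually nontrivial (for $t=e$ your claim would give $V\subseteq\Con_G(t)=\{e\}$). The correct elementary statements are only $V_-\subseteq\Par_G(t)$, $V_+\subseteq\Par_G(t^{-1})$ and $V_0\subseteq\Par_G(t)\cap\Par_G(t^{-1})$.

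More seriously, the step you yourself single out as the main obstacle --- producing $m\in M$ such that the residue $c=gm^{-1}$ satisfies $t^nct^{-n}\to e$ along the \emph{whole} sequence, not merely a subsequence --- is precisely the substance of Baumgartner--Willis's result. They obtain it from the fine structure theory: $\Par_G(t)$ is a closed subgroup of $G$, one chooses a subgroup tidy for $t$ inside it, and the factorization $V=V_+V_-$ together with the behaviour of $V_-$ relative to $V_0$ is what upgrades a cluster-point argument to genuine convergence. Since you state explicitly that you see no way to carry this step through, the proposal has a genuine gap exactly where the theorem lives; as it stands it proves only the trivial inclusion $\Con_G(t)\cdot M\subseteq\Par_G(t)$. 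For the purposes of this paper the honest course is the one the paper takes: quote the result from \cite{Baumgartner}, or else reproduce their argument in full rather than in outline.
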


\begin{proof}
See~\cite{Baumgartner}*{Corollary~3.17}.
\end{proof}

We are now able to prove the implication (iii) $\Rightarrow$ (i) in Theorem~\ref{theorem:Characterization}.

\setcounter{claim}{0}

\begin{theorem}[Theorem~\ref{theorem:Characterization}, (iii) $\Rightarrow$ (i)]\label{theorem:Moufang}
Let $\Delta$ be an infinite thick compact totally disconnected $m$-gon with $m \geq 3$ and let $G$ be a closed strongly transitive subgroup of $\Auttop(\Delta)$ satisfying property $\CG_G$. Then $\Delta$ is Moufang.
\end{theorem}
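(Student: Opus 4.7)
The strategy is to prove the Moufang property one root at a time, using the Baumgartner--Willis decomposition of parabolic subgroups of $G$ combined with the tree structure on each $\Cham(v)$ supplied by the hypothesis $\CG_G$ and Proposition~\ref{proposition:tree}. Since $G$ is strongly transitive, it suffices to verify Moufang for a single root. Fix an apartment $A$ with cyclic vertices $v_0, \ldots, v_{2m-1}$ and chambers $C_j$ with vertex set $\{v_j, v_{j+1}\}$, and focus on $\alpha := (C_0, \ldots, C_{m-1})$ with boundary $\{v_0, v_m\}$. It then suffices to show that for every $E \in \Cham(v_m) \setminus \{C_{m-1}\}$ there exists $u \in U_\alpha$ with $u(C_m) = E$.

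Apply Proposition~\ref{proposition:particular2} at $i = 0$ to pick $t \in H := \Fix_G(A)$ fixing pointwise the axes $\ell_{v_0}$ and $\ell_{v_m}$ while translating every other $\ell_{v_j}$ non-trivially; orient $t$ (replacing by $t^{-1}$ if necessary) so that for $j \in \{1, \ldots, m-1\}$ the attractive end of $t$ on $T(v_j)$ is $C_{j-1}$, the orientation on the opposite half being dictated by Lemma~\ref{lemma:oppositeaction}. Denote by $\pi_j : \Stab_G(v_j) \to \Aut(T(v_j))$ the homomorphism of Proposition~\ref{proposition:tree}. Since $G$ is closed in $\Auttop(\Delta)$, which is totally disconnected locally compact (by Corollary~\ref{corollary:lc} and total disconnectedness of $\Delta$), the Baumgartner--Willis Theorem~\ref{theorem:Baumgartner} yields $\Par_G(t) = \Con_G(t) \cdot M$ with $M := \Par_G(t) \cap \Par_G(t^{-1})$. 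Lemmas~\ref{lemma:inclusion} and~\ref{lemma:treegroups}(ii), applied through each $\pi_j$ after first verifying $\Par_G(t) \leq \Stab_G(v_j)$, then show that every $g \in \Par_G(t)$ fixes the repulsive apartment chamber at each $v_j$ with $j \neq 0, m$; applied symmetrically to $t^{-1}$, this gives $M \leq H$, so in particular $M$ fixes $C_m$.

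The pivotal step is the identification $\Con_G(t) \leq U_\alpha$. For $g \in \Con_G(t)$ and $j \in \{1, \ldots, m-1\}$, the image $\pi_j(g)$ lies simultaneously in the horocyclic subgroup of $\Aut(T(v_j))$ at the attractive end $C_{j-1}$ and in the stabilizer of the repulsive end $C_j$, so it fixes $\ell_{v_j}$ pointwise and acts trivially on a horoball based at $C_{j-1}$. The convergence group hypothesis $\CG_G$ now enters essentially: the image of $\Stab_G(v_j)$ in $\Homeo(\Cham(v_j))$ is a convergence group that, after the Carette--Dreesen identification (Theorem~\ref{theorem:Carette}), acts by tree automorphisms on $T(v_j)$; this rules out a non-identity element fixing the axis together with a horoball, because such an element would be elliptic but its set of fixed ends would be too rich to allow non-trivial action on the remaining ends under the proper convergence structure. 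Hence $\pi_j(g)$ acts trivially on $\Cham(v_j)$ for $j = 1, \ldots, m-1$, and $g$ belongs to the root group $U_\alpha$.

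To conclude, given $E \in \Cham(v_m) \setminus \{C_{m-1}\}$, apply Lemma~\ref{lemma:strongtransitivity} to the opposite panels $v_0, v_m$ to obtain $h \in \Stab_G(v_0, v_m)$ with $h(C_{m-1}) = C_{m-1}$ and $h(C_m) = E$. A short additional argument (using the trivial action of $t$ on $T(v_0)$ and $T(v_m)$ to force conjugates by $\pi_0(t)$ and $\pi_m(t)$ to stay inside compact axis-stabilizers, and controlling intermediate vertices by the stabilizer structure induced at the boundary) shows $h \in \Par_G(t)$. Factoring $h = u \cdot n$ via Baumgartner--Willis with $u \in \Con_G(t) \leq U_\alpha$ and $n \in M \leq H$, and using that $n$ fixes $C_m$, we obtain $u(C_m) = h(C_m) = E$ as required. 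The main obstacle is the identification $\Con_G(t) \leq U_\alpha$, which demands a careful use of $\CG_G$ to upgrade ``fixes the axis and a horoball'' to ``acts trivially on all of $\Cham(v_j)$''; a secondary delicate point is the membership $h \in \Par_G(t)$, since $h$ does not fix $A$ pointwise and its images under $\pi_j$ for intermediate $j$ need not even be defined a priori.
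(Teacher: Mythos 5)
There is a genuine gap, and it sits exactly at the step you call pivotal. Your element $t$ (Proposition~\ref{proposition:particular2} with $i=0$) is elliptic on the two boundary trees $T(v_0)$, $T(v_m)$ and hyperbolic on all interior trees $T(v_1),\dots,T(v_{m-1})$, which is precisely the wrong combination for manufacturing root elements. At an interior panel, $\pi_j(t)$ is hyperbolic, and the contraction group of a hyperbolic tree automorphism inside $\Aut(T(v_j))$ is a horospherical-type subgroup, not $\{e\}$; so nothing forces $\pi_j(g)$ to act trivially on $\Cham(v_j)$ for $g\in\Con_G(t)$. Your repair attempt (``$\pi_j(g)$ also stabilizes the repulsive end, hence fixes the axis and a horoball, and the convergence structure excludes such a non-trivial element'') fails twice: the membership of $\pi_j(g)$ in the stabilizer of the repulsive end is not justified, and the rigidity claim is false --- $\Aut(T)$ itself acts as a convergence group on $T_\infty$ (Lemma~\ref{lemma:tree}) and contains plenty of non-trivial elements fixing an axis together with a half-tree or horoball. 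Worse, even if $\Con_G(t)\le U_\alpha$ did hold, your endgame would be impossible: any $u\in\Con_G(t)\cap U_\alpha$ fixes $\alpha$ pointwise, hence stabilizes $v_m$, and since $t$ fixes $\ell_{v_m}$ pointwise, Lemma~\ref{lemma:treegroups}(i) forces $\pi_m(u)=e$, i.e.\ $u(C_m)=C_m$; so the factorization $h=un$ with $n\in\Fix_G(A)$ could never produce $u(C_m)=E\neq C_m$. The proposal is thus internally inconsistent. A further structural problem is that you take $\Par_G(t)$ and $\Con_G(t)$ inside all of $G$: the inclusion $\Par_G(t)\le\Stab_G(v_j)$ you need is false in general (already in the algebraic model $\Par_G(t)$ is a parabolic whose elements move the interior vertices of $A$), so your maps $\pi_j$ are not even defined on it, and the claimed membership $h\in\Par_G(t)$ is left unproved.

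The paper avoids all of this by reversing the roles of the panels and killing one interior panel at a time. It sets $V_0=\Fix_G(\alpha)$, which is transitive on $\Cham(v_0)\setminus\{C_0\}$ by strong transitivity, defines $V_{i+1}=\Ker(V_i\curvearrowright\Cham(v_{i+1}))$ so that $V_{m-1}=U_\alpha\cap G$, and proves that transitivity on $\Cham(v_0)\setminus\{C_0\}$ passes from $V_i$ to $V_{i+1}$. For that step it chooses $t\in\Fix_G(A)$ elliptic only on $\ell_{v_{i+1}}$ (and its opposite axis) and \emph{hyperbolic} on $\ell_{v_0}$, and works inside $W_i=V_i\rtimes\langle t\rangle$, where every element fixes $\alpha$ pointwise so that all the maps to $\Aut(T(v_j))$ are defined. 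It then proves $\Par_{W_i}(t)=W_i$ using the collapsing-sequence machinery of Section~\ref{section:criterion}; Baumgartner--Willis together with Lemma~\ref{lemma:treegroups}(ii) at $v_0$ transfers transitivity from $W_i$ to $\Con_{W_i}(t)$, while Lemma~\ref{lemma:treegroups}(i) at $v_{i+1}$ places $\Con_{W_i}(t)$ inside $V_{i+1}$. If you want to keep a one-shot argument you would need a $t$ that is hyperbolic at the boundary panel where transitivity must be produced and elliptic at \emph{all} interior panels simultaneously --- but Proposition~\ref{proposition:particular2} only provides elements trivial around a single pair of opposite vertices, which is exactly why the induction over the chain $V_0\supseteq V_1\supseteq\dots\supseteq V_{m-1}$ is needed.
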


\begin{proof}
Fix some root $\alpha$ of $\Delta$ and denote by $v_0, v_1, \ldots, v_m$ its vertices in the natural order. We want to prove that the root group
$$U_\alpha = \{g \in \Aut(\Delta) \mid g \text{ fixes every chamber having a panel in } \alpha \setminus \partial\alpha\}$$
acts transitively on the set of apartments containing $\alpha$. This is equivalent to saying that $U_\alpha$ acts transitively on $\Cham(v_0) \setminus \{C\}$ where $C$ is the chamber having vertices $v_0$ and $v_1$. First remark that $V_0 := \Fix_G(\alpha)$ acts transitively on $\Cham(v_0) \setminus \{C\}$ since $G$ is strongly transitive (see~Lemma~\ref{lemma:strongtransitivity}). We will therefore proceed by induction, by defining successively
\begin{align*}
V_1 &= \Ker(V_0 \curvearrowright \Cham(v_1)), \\
V_2 &= \Ker(V_1 \curvearrowright \Cham(v_2)), \\
\vdots \hspace{0.1cm} & \hspace{2cm} \vdots \\
V_{m-1} &= \Ker(V_{m-2} \curvearrowright \Cham(v_{m-1})).
\end{align*}
In this way, $V_1$ is the fixator in $G$ of $\alpha$ and every chamber having vertex $v_1$, $V_2$ is the fixator in $G$ of $\alpha$ and every chamber having vertex $v_1$ or $v_2$, and so on until $V_{m-1}$ which is exactly $U_\alpha \cap G$. The action of $V_0$ on $\Cham(v_0) \setminus \{C\}$ is transitive and we would like to prove that the action of $V_{m-1}$ on this set remains transitive. It thus suffices to show that if the action of $V_i$ is transitive, then the action of $V_{i+1}$ is transitive too.

Suppose that the action of $V_i$ on $\Cham(v_0) \setminus \{C\}$ is transitive for some $i \in \{0, \ldots, m-2\}$. Consider $A$ any apartment containing $\alpha$ and $H = \Fix_G(A)$. By Proposition~\ref{proposition:particular2}, there exists $t \in H$ fixing $\ell_{v_{i+1}}$ pointwise but not $\ell_{v_j}$ for $j \in \{0, \ldots, i\}$. Let $a$ be the end of $T(v_0)$ corresponding to the chamber $C$ and $b$ be the end of $T(v_0)$ corresponding to the other chamber of $A$ having vertex $v_0$. Replacing $t$ by $t^{-1}$ if necessary, we can assume that $t$ acts negatively around $v_0$, i.e. that it translates $\ell_{v_0} = (a,b)$ toward $b$. Now consider $W_i$ the semidirect product in $G$ of $V_i$ and $\langle t \rangle$ (this is well defined as $H$ normalizes $V_i$). To prove that the action of $V_{i+1}$ on $\Cham(v_0) \setminus \{C\}$ remains transitive, we show that $\Con_{W_i}(t)$ is included in $V_{i+1}$ and that its action on the set remains transitive.

\medskip

\begin{claim}\label{claim:ParW}
The equality $\Par_{W_i}(t) = W_i$ holds.
\end{claim}

\begin{claimproof}
We prove the claim by contradiction. Suppose that there is some element $g \in W_i$ such that $\{t^n g t^{-n}\}_{n \geq 0}$ is not relatively compact in $W_i$. Since $W_i$ is closed in $\Auttop(\Delta)$, it is not relatively compact in $\Auttop(\Delta)$ either. By Proposition~\ref{proposition:closer} with $S = \{t^n g t^{-n}\}_{n \geq 0}$, there exist two sequences in $\Vertex \Delta$ which are collapsed by a subsequence of $(t^ngt^{-n})$ or $(t^ng^{-1}t^{-n})$. Replacing $g$ by $g^{-1}$ if necessary, we can assume that it is a subsequence of $(t^ngt^{-n})$. We would then like to use Proposition~\ref{proposition:centerok} with the sequence of apartments $A \to A$. Let $v_{m+1}, \ldots, v_{2m-1}$ be the vertices of $A$ outside $\alpha$ (in the natural order) and $D$ the chamber having vertices $v_0$ and $v_{2m-1}$. Passing to a subsequence, we can assume that $t^ngt^{-n} D \to \tilde{D}$ for some chamber $\tilde{D}$. If $\tilde{D} \neq C$, then $t^ngt^{-n} A \to \tilde{A}$ where $\tilde{A}$ is the unique apartment containing $\alpha$ and $\tilde{D}$ and Proposition~\ref{proposition:centerok} gives two sequences centered at $v_k \to v_k$ for some $k \in \{0, \ldots, 2m-1\}$ and collapsed by a subsequence of $(t^ngt^{-n})$. By Lemma~\ref{lemma:opposite}, we can assume that $k \in \{0, \ldots, m-1\}$. On the other hand, if $\tilde{D} = C$, then $v_1 \to v_1$ and $v_{2m-1} \to v_{2m-1}$ are sequences centered at $v_0 \to v_0$ and collapsed by $(t^ngt^{-n})$.

In either case, there are two sequences centered at $v_k \to v_k$ for some $k \in \{0, \ldots, m-1\}$ and collapsed by a subsequence of $(t^ngt^{-n})$. We introduce the notation (for $j \in \{0, \ldots, m\}$)
$$\varphi_j : W_i \to \Aut(T(v_j))$$ for the natural map from $W_i \subseteq \Stab_{\Auttop(\Delta)}(v_j)$ to $\Aut(T(v_j))$. For each $j \in \{1, \ldots, m-~1\}$, since $t^ngt^{-n}$ translates $\ell_{v_j}$ exactly as $g$ does for all $n \in \N$, the set $\{\varphi_j(t^ngt^{-n})\}_{n \geq 0}$ is relatively compact in $\Aut(T(v_j))$. It is also relatively compact in $\Homeo(T_\infty(v_j))$ and hence equicontinuous on $T_\infty(v_j)$ by the Arzela-Ascoli theorem. It is therefore impossible to have two sequences centered at $v_j \to v_j$ and collapsed by a subsequence of $(t^ngt^{-n})$ for $j \in \{1, \ldots, m-1\}$. But it is not possible either to have them centered at $v_0 \to v_0$. Indeed, the existence of two such sequences would imply that $\varphi_0(g) \not \in \Par_{\Aut(T(v_0))}(\varphi_0(t)) = \Fix_{\Aut(T(v_0))}(a)$ (see Lemma~\ref{lemma:treegroups} (ii)) which is impossible since $g \in W_i$ fixes $a$. The claim is therefore proven.
\end{claimproof}

\medskip

\begin{claim}\label{claim:transitiveCon}
$\Con_{W_i}(t)$ acts transitively on $\Cham(v_0) \setminus \{C\}$.
\end{claim}

\begin{claimproof}
Applying Theorem~\ref{theorem:Baumgartner} to $t \in W_i$ (note that $W_i$ is locally compact as a closed subgroup of $\Auttop(\Delta)$ which is locally compact (Corollary~\ref{corollary:lc}) and totally disconnected since it acts continuously and faithfully on the totally disconnected space $\Cham\Delta$), we get
$$\Par_{W_i}(t) = \Con_{W_i}(t) \cdot (\Par_{W_i}(t) \cap \Par_{W_i}(t^{-1})).$$
Now by Lemma~\ref{lemma:inclusion},
$$\varphi_0(\Par_{W_i}(t)) \subseteq \varphi_0(\Con_{W_i}(t)) \cdot (\Par_{\Aut(T(v_0))}(\varphi_0(t)) \cap \Par_{\Aut(T(v_0))}(\varphi_0(t^{-1}))).$$
Claim~\ref{claim:ParW} together with Lemma~\ref{lemma:treegroups} (ii) finally gives
$$\varphi_0(W_i) \subseteq \varphi_0(\Con_{W_i}(t)) \cdot \Fix_{\Aut(T(v_0))}(a, b).$$
But $V_i$ acts transitively on $\Cham(v_0) \setminus \{C\}$ and so does $W_i$, which means that $\varphi_0(W_i)(b) = T_\infty(v_0) \setminus \{a\}$. Hence,
$$T_\infty(v_0) \setminus \{a\} = \varphi_0(W_i)(b) \subseteq \varphi_0(\Con_{W_i}(t))(b)$$
and $\Con_{W_i}(t)$ also acts transitively on $\Cham(v_0) \setminus \{C\}$.
\end{claimproof}

\medskip

\begin{claim}\label{claim:inclusionCon}
The inclusion $\Con_{W_i}(t) \subseteq V_{i+1}$ holds.
\end{claim}

\begin{claimproof}
It is not even clear for the moment that $\Con_{W_i}(t)$ is included in $V_i$. If $i = 0$, then $t \in H \subseteq V_0$ and $W_0 = V_0$ so this is true. If $i > 0$, then $t \not \in V_i$ (that is actually why we introduced the semidirect product $W_i$) but we can prove that $\Con_{W_i}(t) \subseteq V_i$. Indeed, suppose by contradiction that there is an element of $\Con_{W_i}(t)$ that is not in $V_i$, i.e. of the form $xt^k$ with $x \in V_i$ and $k \in \Z_0$. This means that $t^n x t^{k-n} \to e$ in $W_i$ when $n$ goes to infinity. But $t$ does not fix $\ell_{v_i}$ pointwise while $x$ does as it acts trivially on $\Cham(v_i)$. Hence, $t^n x t^{k-n}$ does not fix $\ell_{v_i}$ pointwise for each $n \in \N$ and $t^n x t^{k-n} \not\to e$. This proves $\Con_{W_i}(t) \subseteq V_i$. Finally, since
$$\varphi_{i+1}(\Con_{W_i}(t)) \subseteq \Con_{\Aut(T(v_{i+1}))}(\varphi_{i+1}(t)) = \{e\}$$
by Lemma~\ref{lemma:treegroups} (i), we get $\Con_{W_i}(t) \subseteq V_{i+1}$.
\end{claimproof}

\medskip

As explained above, the conclusion follows from Claim~\ref{claim:transitiveCon} and Claim~\ref{claim:inclusionCon}.
\end{proof}

We finally give the proof of Corollary~\ref{corollary:affine}.

\begin{proof}[Proof of Corollary~\ref{corollary:affine}]
Let $X$ be a locally finite thick affine building of rank~$3$ and of irreducible type which is strongly transitive. It is clear that $\Aut(X)$ is a closed subgroup of $\Auttop(X_\infty)$ and, by Proposition~\ref{proposition:affine} and Proposition~\ref{proposition:proj-CG}, $X_\infty$ has the property $\CG_{\Aut(X)}$. Moreover, $\Aut(X)$ is strongly transitive on $X$ and hence on $X_\infty$. By Theorem~\ref{theorem:Moufang}, $X_\infty$ is Moufang. In other words, $X$ is a Bruhat-Tits building. Those buildings have been classified by Bruhat and Tits; this classification is the subject of an important part of the book~\cite{Weiss}.
\end{proof}

\begin{bibdiv}
\begin{biblist}

\bib{Arens}{article}{
author = {Arens, Richard F.},
title = {A topology for spaces of transformations},
journal = {Ann. of Math. (2)},
volume = {47},
number = {3},
year = {1946},
pages = {480--495}
}

\bib{Baumgartner}{article}{
author = {Baumgartner, Udo},
author = {Willis, George A.},
title = {Contraction groups and scales of automorphisms of totally disconnected locally compact groups},
journal = {Israel J. Math.},
volume = {142},
year = {2004},
pages = {221--248}
}

\bib{Buekenhout}{article}{
author = {Buekenhout, Francis},
author = {Van Maldeghem, Hendrik},
title = {Finite distance-transitive generalized polygons},
journal = {Geom. Dedicata},
year = {1994},
volume = {52},
pages = {41--51},
number = {1}
}

\bib{Burns}{article}{
author = {Burns, Keith},
author = {Spatzier, Ralf},
title = {On topological Tits building and their classification},
journal = {Publ. Math. Inst. Hautes \'Etudes Sci.},
number = {65},
year = {1987},
pages = {5--34}
}

\bib{Burns2}{article}{
author = {Burns, Keith},
author = {Spatzier, Ralf},
title = {Manifolds of nonpositive curvature and their buildings},
journal = {Publ. Math. Inst. Hautes \'Etudes Sci.},
number = {65},
year = {1987},
pages = {35--59}
}

\bib{Caprace}{article}{
author = {Caprace, Pierre-Emmanuel},
author = {De Medts, Tom},
title = {Trees, contraction groups, and Moufang sets},
journal = {Duke Math. J},
volume = {162},
number = {13},
year = {2013},
pages = {2413--2449}
}

\bib{CapraceMonod}{article}{
author = {Caprace, Pierre-Emmanuel},
author = {Monod, Nicolas},
title = {An indiscrete Bieberbach theorem: from amenable $\mathrm{CAT(0)}$ groups to Tits buildings},
journal = {J. \'Ec. polytech. Math.},
volume = {2},
year = {2015},
pages = {333--383}
}

\bib{Carette}{article}{
author = {Carette, Mathieu},
author = {Dreesen, Denis},
title = {Locally compact convergence groups and $n$-transitive actions},
journal = {Math. Z.},
volume = {278},
number = {3--4},
pages = {795-827},
year = {2014}
}

\bib{HarmonicAnalysis}{book}{
author = {Figà-Talamanca, Alessandro},
author = {Nebbia, Claudio},
title = {Harmonic analysis and representation theory for groups acting on homogeneous trees},
series = {London Math. Soc. Lecture Note Ser.},
volume = {162},
publisher = {Cambridge University Press},
year = {1991},
place = {Cambridge}
}

\bib{Grundhoferbis}{article}{
author = {Grundh\"ofer, Theo},
title = {Automorphism groups of compact projective planes},
journal = {Geom. Dedicata},
year = {1986},
volume = {21},
pages = {291--298},
number = {3}
}

\bib{GKK}{article}{
author = {Grundh\"ofer, Theo},
author = {Knarr, Norbert},
author = {Kramer, Linus},
title = {Flag-homogeneous compact connected polygons},
journal = {Geom. Dedicata},
year = {1995},
volume = {55},
pages = {95--114},
number = {1}
}

\bib{GKK2}{article}{
author = {Grundh\"ofer, Theo},
author = {Knarr, Norbert},
author = {Kramer, Linus},
title = {Flag-homogeneous compact connected polygons II},
journal = {Geom. Dedicata},
year = {2000},
volume = {83},
pages = {1--29},
number = {1--3}
}

\bib{Grundhofer}{article}{
author = {Grundh\"ofer, Theo},
author = {Kramer, Linus},
author = {Van Maldeghem, Hendrik},
author = {Weiss, Richard M.},
title = {Compact totally disconnected Moufang buildings},
journal = {Tohoku Math. J. (2)},
year = {2012},
volume = {64},
pages = {333--360},
number = {3}
}

\bib{Knarr}{article}{
author = {Knarr, Norbert},
title = {Projectivities of generalized polygons},
journal = {Ars Combin.},
year = {1988},
volume = {25},
number = {B},
pages = {265--275},
}

\bib{Lowen}{incollection}{
author = {L\"owen, Rainer},
title = {Projectivities and the geometric structure of topological planes},
booktitle = {Geometry - von Staudt's point of view},
series = {NATO Adv. Study Inst. Ser. C: Math. Phys. Sci.},
volume = {70},
publisher = {Reidel},
editor = {Plaumann, Peter},
editor = {Strambach, Karl},
year = {1981},
place = {Dordrecht},
pages = {339--372}
}

\bib{Munkres}{book}{
author = {Munkres, James R.},
title = {Topology},
edition = {2},
publisher = {Prentice Hall},
year = {2000}
}

\bib{Ronan}{book}{
author = {Ronan, Mark},
title = {Lectures on buildings},
series = {Perspectives in Mathematics},
volume = {7},
publisher = {Academic Press, Inc.},
place = {Boston},
year = {1989}
}

\bib{Salzmann}{book}{
author = {Salzmann, Helmut},
author = {Betten, Dieter},
author = {Grund\"ofer, Theo},
author = {H\"ahl, Hermann},
author = {L\"owen, Rainer},
author = {Stroppel, Markus},
title = {Compact projective planes},
subtitle = {with an introduction to octonion geometry},
series = {de Gruyter Exp. Math.},
volume = {21},
publisher = {Walter de Gruyter \& Co.},
place = {New York},
year = {1995},
}

\bib{Titsarbres}{incollection}{
author = {Tits, Jacques},
title = {Sur le groupe des automorphismes d'un arbre},
booktitle = {Essays on topology and related topics (Mémoires dédiés à Georges de Rham)},
language = {French},
publisher = {Springer},
editor = {Haefliger, André},
editor = {Narasimhan, Raghavan},
year = {1970},
place = {New York},
pages = {188--211}
}

\bib{Tits}{book}{
author = {Tits, Jacques},
title = {Buildings of spherical type and finite BN-pairs},
publisher = {Springer-Verlag},
year = {1974},
volume = {386},
series = {Lecture Notes in Math.},
place = {Berlin}
}

\bib{VM}{article}{
author = {Van Maldeghem, Hendrik},
author = {Van Steen, Kristel},
title = {Characterizations by automorphism groups of some rank~3 buildings},
subtitle = {I. Some properties of half strongly-transitive triangle buildings},
journal = {Geom. Dedicata},
year = {1998},
volume = {73},
number = {2},
pages = {119--142}
}

\bib{Weiss}{book}{
author = {Weiss, Richard M.},
title = {The structure of affine buildings},
series = {Ann. of Math. Stud.},
volume = {168},
publisher = {Princeton University Press},
place = {Princeton},
year = {2009}
}

\end{biblist}
\end{bibdiv}
 
\end{document}